\theoremstyle{plain}
\newtheorem{theorem}{Theorem}[section]
\newtheorem{lemma}[theorem]{Lemma}
\newtheorem{cor}[theorem]{Corollary}
\newtheorem{prop}[theorem]{Proposition}
\newtheorem{question}{Question}
\newtheorem{obs}[theorem]{Observation}
\theoremstyle{definition}
\newtheorem{definition}[theorem]{Definition}
\newtheorem{example}[theorem]{Example}
\theoremstyle{definition}
\newtheorem*{remark}{Remark}
\newtheorem{claim}{Claim}
\newtheorem*{notation}{Notation}
\newtheorem*{ack}{Acknowledgements}
\newcommand{\upto}{\upharpoonright}
\newcommand{\fr}{\mbox{}^\smallfrown}
\newcommand{\N}{\mathbb{N}}
\newcommand{\om}{\omega}
\newcommand{\pcolon}{\colon\!\!\!\subseteq}
\newcommand{\ep}{\varepsilon}
\providecommand{\bignplus}{%
  \mathop{%
    \mathpalette\@updown\biguplus
  }%
}
\newcommand*{\@updown}[2]{%
  \rotatebox[origin=c]{180}{$\m@th#1#2$}%
}
\newcommand{\pushoutcorner}[1][dr]{\save*!/#1-1.2pc/#1:(-1,1)@^{|-}\restore}
\newcommand{\pair}[1]{\langle #1 \rangle}
\newcommand{\wunion}{\uplus}
\newcommand{\arr}{\rightarrowtriangle}
\newcommand{\inj}{\rightarrowtail}
\newcommand{\mono}{\rightarrowtail}
\newcommand{\embed}{\hookrightarrow}
\newcommand{\edge}[1]{\overset{#1}{\longrightarrow}}
\newcommand{\monoedge}[1]{\overset{#1}{\inj}}
\newcommand{\tto}{\rightrightarrows}
\newcommand{\eval}[1]{\llbracket #1 \rrbracket}
\title{Many-one reducibility with realizability}
\author{Takayuki Kihara}
\date{}
\begin{document}
\maketitle

\begin{abstract}
In this article, we propose a new classification of $\Sigma^0_2$ formulas under the realizability interpretation of many-one reducibility (i.e., Levin reducibility).
For example, ${\sf Fin}$, the decision of being eventually zero for sequences, is many-one/Levin complete among $\Sigma^0_2$ formulas of the form $\exists n\forall m\geq n.\varphi(m,x)$, where $\varphi$ is decidable.
The decision of boundedness for sequences ${\sf BddSeq}$ and for width of posets ${\sf FinWidth}$ are many-one/Levin complete among $\Sigma^0_2$ formulas of the form $\exists n\forall m\geq n\forall k.\varphi(m,k,x)$, where $\varphi$ is decidable.
However, unlike the classical many-one reducibility, none of the above is $\Sigma^0_2$-complete.
The decision of non-density of linear orders ${\sf NonDense}$ is truly $\Sigma^0_2$-complete.
\end{abstract}

\section{Introduction}

In this article, we introduce the notion of many-one reducibility for {\em sets with witnesses} and reanalyze the arithmetical/Borel/projective hierarchy under this new reducibility notion.
In computational complexity theory, the notion of polytime many-one reducibility for sets with witnesses (a.k.a.~search problems or function problems) is known as {\em Levin reducibility} \cite{Lev73}, but strangely enough, it seems that its computable analogue has never been studied.

\begin{definition}[Levin \cite{Lev73}]\label{def:Levin-reducibility}
Let $\Sigma$ be a finite alphabet.
A search problem (or a set with witnesses) is a binary relation $R\subseteq\Sigma^\ast\times\Sigma^\ast$, and any $y$ satisfying $R(x,y)$ is called a witness (or a certificate) for $x\in|R|$, where $|R|=\{x:\exists yR(x,y)\}$.
For a complexity class $\mathcal{C}$ and search problems $A$ and $B$, we say that $A$ is {\em $\mathcal{C}$-Levin reducible to $B$} if there exist $\mathcal{C}$-functions $\varphi,r_-,r_+$ such that for any $x,y,z\in\Sigma^\ast$ the following holds:
\begin{enumerate}
\item $x\in |A|$ if and only if $\varphi(x)\in |B|$.
\item If $y$ is a witness for $x\in|A|$ then $r_-(x,y)$ is a witness for $\varphi(x)\in |B|$.
\item If $z$ is a witness for $\varphi(x)\in|B|$ then $r_+(x,z)$ is a witness for $x\in |A|$.
\end{enumerate}
\end{definition}

As a closer look at the definition shows, Levin reducibility is nothing more than the realizability interpretation of many-one reducibility.
In this article, we introduce the notion of many-one reducibility for subobjects in any category having pullbacks, and observe that the same definition as computable Levin reducibility is restored as many-one reducibility in the category of represented spaces.
This perspective unexpectedly connects the notion of Levin reducibility with the study of arithmetical/Borel/projective hierarchy in intuitionistic systems.

The notion of many-one reducibility in intuitionistic/constructive systems was first studied exhaustively by Veldman \cite{VeldmanPhD,Vel90,Vel09,Vel22} and later more recently by \cite{FoFe23} and others.
According to Veldman, the intuitionistic Borel/projective hierarchy behaves differently from the classical hierarchy.
For instance, Veldman showed that, under a certain intuitionistic system, the union of two $\Pi^0_1$ sets is not necessarily $\Pi^0_1$ \cite{VeldmanPhD}, the set ${\sf Fin}$ of all sequences which is eventually zero is not $\Sigma^0_2$-complete \cite{Vel09}, and the set ${\sf IFKB}$ of all trees which is ill-founded w.r.t.~the Kleene-Brouwer ordering is not $\Sigma^1_1$-complete \cite{Vel22}.

We see that these seemingly strange results can be clearly understood using Levin reducibility.
The category of represented spaces has the natural numbers object $\om$, the exponential object $\om^\om$, and the interpretation of first-order logic, so one can introduce the notion of arithmetic/Borel/projective subobjects by interpreting their defining formulas in the internal logic.
Moreover, in this category, a subobject is nothing but a subset with witnesses.
Based on these observations, for instance, one can understand Veldman's result as meaning that the witnessed version of ${\sf Fin}$ is simply not Levin-complete among the witnessed $\Sigma^0_2$ subsets (in classical mathematics).
The same applies to ${\sf IFKB}$.
In this way, even classical mathematicians can clearly understand Veldman's results on the intuitionistic hierarchy.

Of course, merely giving an interpretation of the existing results is not very interesting, so we push this point of view forward with further analysis of many-one/Levin degrees of witnessed sets.
In this article, we focus in particular on $\Sigma^0_2$ sets with their existential witnesses.
Our results are summarized in Figure \ref{fig:Sigma2}.
\begin{figure}[t]
\includegraphics[width=80mm]{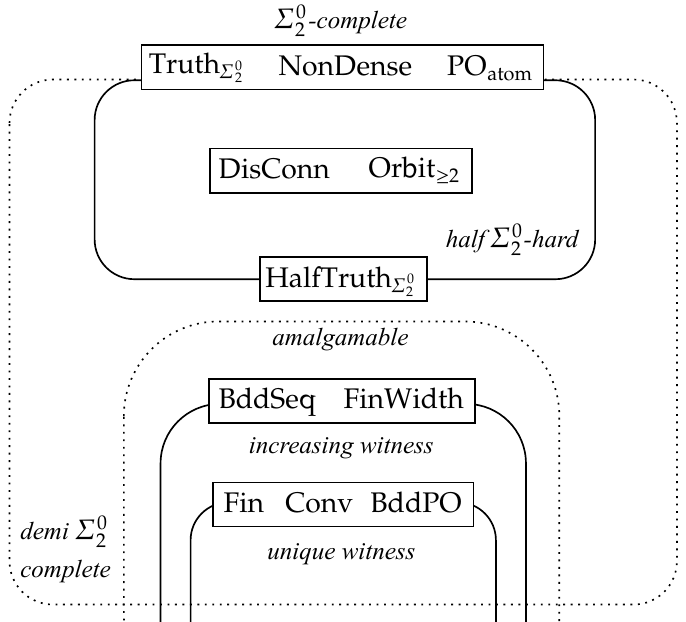}
\caption{The fine analysis of classical $\Sigma^0_2$-complete sets}\label{fig:Sigma2}
\end{figure}

For example, ${\sf Fin}$ is many-one/Levin complete among $\Sigma^0_2$ sets defined by formulas of the form $\exists n\forall m\geq n.\varphi(m,x)$, where $\varphi$ is decidable.
The decision of boundedness for posets ${\sf BddPO}$ is also at the same level. 
The decision of boundedness for sequences ${\sf BddSeq}$ is many-one/Levin complete among $\Sigma^0_2$ sets defined by formulas of the form $\exists n\forall m\geq n\forall k.\varphi(m,k,x)$, where $\varphi$ is decidable.
The decision of finiteness for width ${\sf FinWidth}$ and height ${\sf FinHeight}$ for posets are also at the same level. 
The decision of non-density of linear orders ${\sf NonDense}$ is truly $\Sigma^0_2$-complete.
In this way, the focus on (existential) witnesses leads us to the discovery of a previously unknown classification of $\Sigma^0_2$ sets.

\section{Preliminaries}

\subsection{Represented space}
A coding system is a set ${\tt Code}$ of symbols for coding various mathematical objects, with a prior specification of which functions on ${\tt Code}$ are realizable.
There are three typical coding systems:
\begin{enumerate}
\item Kleene's first algebra ${\sf K}_1$: ${\tt Code}=\om$, and
\begin{center}
realizable functions $=$ computable functions on $\om$.
\end{center}
\item Kleene's second algebra ${\sf K}_2$: ${\tt Code}=\om^\om$, and
\begin{center}
realizable functions $=$ continuous functions on $\om^\om$.
\end{center}
\item Kleene-Vesley algebra ${\sf KV}$: ${\tt Code}=\om^\om$, and
\begin{center}
realizable functions $=$ computable functions on $\om^\om$.
\end{center}
\end{enumerate}

Here, $\om$ denotes the set of all natural numbers.

\begin{notation}
We write $p\ast x$ as the output result of feeding an input $x$ to the realizable function coded by $p$.
For instance, $e\ast x$ in ${\sf K}_1$ stands for $\{e\}(x)$ or $\varphi_e(x)$ in traditional notation.
\end{notation}

Hereafter we assume that a coding system ${\tt Code}$ is one of ${\sf K}_1$, ${\sf K}_2$, or ${\sf KV}$.
Of course, it is obviously possible to consider an arbitrary {\em relative partial combinatory algebra} (or more) as a coding system \cite{vOBook}.
However, in order to lower the threshold for reading, we avoid unnecessary generalizations as much as possible.

\begin{definition}\label{def:rep-sp-intro}
A {\em represented space} $X$ consists of an underlying set $|X|$ and a partial surjection $\delta_X\pcolon{\tt Code}\to|X|$.
We sometimes use the symbol $p\vdash_Xx$ to denote $\delta_X(p)=x$, and say that $p$ is an $X$-name of $x$ or $p$ is a name of $x\in X$.
\end{definition}

We sometimes use $E_X(x)$ to denote the set of all names of $x\in X$.

\begin{example}\label{exa:code-itself}
${\tt Code}$ itself is a represented space via the identity map ${\rm id}\colon{\tt Code}\to{\tt Code}$. 
\end{example}

\begin{example}\label{exa:terminal}
The {\em terminal space} $\mathbf{1}$ is defined as follows:
The underlying set is $|\mathbf{1}|=\{\bullet\}$, and any $p\in{\tt Code}$ is a name of its unique element $\bullet$.
\end{example}

\begin{example}
The space of natural numbers ${\sf Nat}$ is defined as follows:
The underlying set is $|\mathsf{Nat}|=\om$.
In ${\sf K}_1$, $n\in\om$ is a name of $n\in{\sf Nat}$.
In ${\sf K}_2$ or ${\sf KV}$, $n0^\infty\in\om^\om$ is a name of $n\in{\sf Nat}$, where $n0^\infty$ is the infinite string resulting from concatenating $n$ followed by the zero sequence $0^\infty$; that is, $(n0^\infty)(0)=n$ and $(n0^\infty)(k)=0$ for any $k>0$.
By an abuse of notation, we often use $\om$ to denote ${\sf Nat}$.
\end{example}

\begin{example}\label{exa:Sierpinski-space}
The {\em represented Sierpi\'nski space} $\mathbb{S}$ is defined as follows:
\begin{itemize}
\item The underlying set is $|\mathbb{S}|=\{\top,\bot\}$.
\item (${\sf K}_1$)
If $p\ast 0\downarrow$ then $p$ is a name of $\top$ else $p$ is a name of $\bot$.
\item (${\sf K}_2$ or ${\sf KV}$)
The zero sequence $0^\infty$ is a name of $\bot$ and the other sequences are names of $\top$.
\end{itemize}

See also Bauer \cite{Bau00} for a common construction of the Sierpi\'nski space (a.k.a.~the Rosolini dominance) in general coding systems.
\end{example}

\begin{definition}
Let $X$ and $Y$ be represented spaces.
A {\em morphism $f\colon X\to Y$} is a function $f\colon |X|\to |Y|$ such that there exists a partial realizable function $F$ on ${\tt Code}$ such that if $p$ is a name of $x\in X$ then $F(p)$ is a name of $f(x)\in Y$.
We often call $F$ a {\em tracker} of $f$.
\end{definition}

\begin{definition}
For represented spaces $X,Y$, a morphism $f\colon X\to Y$ is {\em mono} if it is injective on underlying sets.
\end{definition}

\section{The structure of subobjects}

\subsection{Witnessed subset}
Our aim is to consider an arithmetic hierarchy over a represented space.
For this purpose, we carefully consider what a subset of a represented space is.

\begin{definition}
A {\em subspace} of a represented space $X$ is a represented space $A$ such that $|A|\subseteq |X|$ and $\delta_A=\delta_X{\upto_{|A|}}$; that is, the $A$- and $X$-names of $x\in |A|$ are the same.
\end{definition}

The notion of a subspace seems most appropriate when viewing a subset of a represented space as a represented space again.
However, there could be another possibility.

\begin{definition}\label{def:subobject-regular}
A {\em subobject} of a represented space $X$ is a represented space $A$ such that $|A|\subseteq |X|$ and there exists a partial realizable function which, given an $A$-name of $x\in|A|$, returns its $X$-name.

A {\em regular subobject} of a represented space $X$ is a subobject $A$ of $X$ such that there exists a partial realizable function which, given an $X$-name of $x\in |A|$, returns its $A$-name.
\end{definition}

\begin{obs}
A represented space $A$ is a subobject of $X$ iff $|A|\subseteq|X|$ and the inclusion map $i\colon A\mono X$ is a morphism.

A represented space $A$ is a regular subobject of $X$ iff it is a subobject of $X$ and the inclusion morphism $i\colon A\mono X$ has a partial inverse morphism $i^{-1}\pcolon X\to A$; that is, $i^{-1}(i(x))=x$ for any $x\in |A|$.
\end{obs}

The most basic relation between subsets is the inclusion relation.
We introduce the inclusion relation between subobjects as follows:

\begin{definition}
For subobjects $A,B$ of a represented space $X$, we say that $A$ is {\em included in $B$} (written $A\subseteq B$) if $|A|\subseteq|B|$ and the inclusion map $i\colon A\embed B$ is a morphism.
If $A\subseteq B$ and $B\subseteq A$ we say that $A$ is equivalent to $B$ and write $A\equiv B$.
\end{definition}

\begin{obs}
A subspace of $X$ is a regular subobject of $X$.
Conversely, every regular subobject of $X$ is equivalent to a subspace of $X$.
\end{obs}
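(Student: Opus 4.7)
The plan is to unpack the definitions directly; this observation is essentially a routine verification and I expect no serious obstacle, only careful bookkeeping of which names are which.

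For the first assertion, suppose $A$ is a subspace of $X$, so $|A|\subseteq|X|$ and $\delta_A = \delta_X\upto_{|A|}$. Then for every $x\in|A|$, the set of $A$-names of $x$ is literally equal to the set of $X$-names of $x$. Consequently the identity function on ${\tt Code}$ tracks both the inclusion $i\colon A\mono X$ and the partial inverse $i^{-1}\pcolon X\to A$ (the latter being defined exactly on names of elements of $|A|$). Hence $A$ is a regular subobject.

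For the converse, suppose $A$ is a regular subobject of $X$. Define $A'$ to be the subspace of $X$ with $|A'|=|A|$ and $\delta_{A'}=\delta_X\upto_{|A|}$. I claim $A\equiv A'$. For $A\subseteq A'$, I need a morphism $A\to A'$ implementing the identity on $|A|$; but a tracker $F$ of $i\colon A\mono X$ turns an $A$-name of $x\in|A|$ into an $X$-name of $x$, which by definition of $A'$ is an $A'$-name of $x$. For $A'\subseteq A$, a tracker $G$ of the partial inverse $i^{-1}\pcolon X\to A$ from the regularity hypothesis turns any $X$-name of $x\in|A|$ into an $A$-name of $x$, and $A'$-names are precisely $X$-names of elements of $|A|$, so $G$ gives the required morphism $A'\to A$.

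The only point to watch is keeping the represented spaces $A$ and $A'$ distinct at the level of representations while they share the same underlying set; once this is kept straight the two trackers supplied by the definition of a regular subobject supply the two inclusions verbatim, so no further work is needed.
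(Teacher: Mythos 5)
Your proof is correct and is exactly the routine definition-unpacking the paper intends: the paper states this observation without proof, and your verification (identity tracker for the subspace direction; the trackers of $i$ and $i^{-1}$ giving the two inclusions $A\subseteq A'$ and $A'\subseteq A$ for the converse) is the expected argument. No gaps.
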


Thus, one can understand that a regular subobject is a represented space obtained by taking a subset of a represented space.
What then is the value of non-regular subobjects?
To answer this, it is better to think of the notion of a (non-regular) subobject as a subset with additional information, such as a {\em subset with witnesses}, rather than just a subset.

\begin{definition}
A {\em witnessed subset} $A$ of a represented space $X$ is a represented space such that $|A|\subseteq|X|$ and every name of $x\in A$ is a pair $\langle w,p\rangle$ of an $X$-name $p$ of $x$ and some $w\in{\tt Code}$.
In this case, $w$ is called a {\em witness} for $x\in A$.
\end{definition}

One can see that a subobject of a represented space is nothing more than a witnessed subset:

\begin{obs}
A witnessed subset of $X$ is a subobject of $X$.
Conversely, every subobject of $X$ is equivalent to a witnessed subset of $X$.
\end{obs}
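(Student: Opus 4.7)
The plan is to prove the two implications by a direct definition-chase.

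For the first implication, assume $A$ is a witnessed subset of $X$. By definition every $A$-name has the form $\langle w,p\rangle$ where $p$ is an $X$-name of the point named, so the inclusion $i\colon A\mono X$ is tracked by the second-projection $\langle w,p\rangle\mapsto p$. This projection is realizable in each of the coding systems ${\sf K}_1$, ${\sf K}_2$, and ${\sf KV}$, so $A$ is a subobject of $X$.

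For the converse, given a subobject $A$ of $X$, I would fix a tracker $F$ of the inclusion $i\colon A\mono X$ and build an equivalent witnessed subset $\tilde A$ by setting $|\tilde A|=|A|$ and declaring $\langle w,p\rangle$ to be a $\tilde A$-name of $x$ exactly when $w$ is an $A$-name of $x$ and $p=F(w)$. Since $F(w)$ is automatically an $X$-name of $x$, every $\tilde A$-name has the required pair form, so $\tilde A$ is a witnessed subset of $X$; the associated representation is surjective because any $x\in|A|$ has some $A$-name $w$, yielding the $\tilde A$-name $\langle w,F(w)\rangle$.

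To verify $A\equiv\tilde A$, I would exhibit realizable maps in both directions: the assignment $w\mapsto\langle w,F(w)\rangle$ witnesses $A\subseteq\tilde A$ (it uses pairing together with $F$), and the first-projection $\langle w,p\rangle\mapsto w$ witnesses $\tilde A\subseteq A$.

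I do not anticipate any serious obstacle; the proof is essentially a bookkeeping exercise. The only point requiring care is confirming that pairing, projection, and the inclusion tracker $F$ are all realizable in the ambient coding system — which they are in each of ${\sf K}_1$, ${\sf K}_2$, and ${\sf KV}$.
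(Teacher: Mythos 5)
Your proposal is correct and follows essentially the same argument as the paper: the projection $\langle w,p\rangle\mapsto p$ tracks the inclusion for a witnessed subset, and for the converse the paper also builds the space $A_f$ with names $\langle w,f(w)\rangle$ (your $\tilde A$), with the same pair of trackers $w\mapsto\langle w,f(w)\rangle$ and $\pi_0$ establishing the equivalence. Nothing further is needed.
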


\begin{proof}
For a witnessed subset $A$ of $X$, the inclusion map $i\colon A\embed X$ is clearly tracked by the projection map $\pi_1\colon\langle w,p\rangle\mapsto p$.
Conversely, if $A$ is a subobject of $X$, that is, $i\colon A\embed X$ is tracked by some $f$, then consider the following represented space $A_f$:
The underlying set is $|A_f|=|A|$, and $\langle w,p\rangle$ is a name of $x\in A_f$ iff $w$ is a name of $x\in A$ and $p=f(w)$.
Then, $A_f$ is clearly a witnessed subset of $X$.
Moreover, $A_f\subseteq A$ is tracked by $\pi_0\colon\langle w,p\rangle\mapsto w$, and $A\subseteq A_f$ is tracked by $w\mapsto\langle w,f(w)\rangle$.
\end{proof}

Then we will quickly realize that there are numerous natural examples of non-regular subobjects.

\begin{example}[${\sf K}_2$ or ${\sf KV}$]\label{exa:subobject-fin-def}
In ${\sf K}_2$ and ${\sf KV}$, the space $\om^\om$ is represented by the identity map as in Example \ref{exa:code-itself}.
Then a subobject ${\sf Fin}$ of $\om^\om$ is defined as follows:
\begin{itemize}
\item The underlying set is $|{\sf Fin}|=\{x\in\om^\om:(\exists n)(\forall m\geq n)\;x(m)=0\}$.
\item A name of $x\in{\sf Fin}$ is a pair $\langle n,x\rangle$ of $x\in\om^\om$ and its witness $n$; that is, $x(m)=0$ for any $m\geq n$.
\end{itemize}

Note that the inclusion map ${\sf Fin}\mono\om^\om$ is a morphism, tracked by the projection $\pi_1\colon\langle n,x\rangle\mapsto x$.
\end{example}

\begin{prop}\label{prop:subobject-fin-def-nonreg-basic}
${\sf Fin}$ is a non-regular subobject of $\om^\om$.
\end{prop}

\begin{proof}
Suppose that ${\sf Fin}$ is regular.
Then, there exists a partial realizable function $F$ which, given $x\in|{\sf Fin}|$, returns an ${\sf Fin}$-name of $x$, say $F(x)=\langle n,x\rangle$.
By the continuity of $F$, the witness $n$ is determined after reading a finite initial segment $x\upto s$ of $x$.
Put $t=\max\{n,s\}$, and consider $y=(x\upto t)\fr 1\fr 0^\infty$.
Then $y\in |{\sf Fin}|$, so $F(y)$ returns a name of $y\in{\sf Fin}$; that is, $F(y)$ is of the form $\langle m,y\rangle$.
As $y$ extends $x\upto s$, the first value of $F(y)$ must be $n$; hence $F(y)=\langle n,y\rangle$.
However, we have $t\geq n$ and $y(t)=1$, which means that $\langle n,y\rangle$ is not an ${\sf Fin}$-name of $y$.
\end{proof}

One of the most typical ways to obtain a set is to describe a formula $\varphi$ to define a subset $\{x\in X:\varphi(x)\}$ of $X$.
Then, it is sometimes desirable to keep information behind the construction of the subset, for example, a witness of an existential quantification within $\varphi$.
In such a case, non-regular subobjects can appear, as described above.
Our goal is to classify such ``subsets with witnesses''.

\begin{remark}
Let us give some more background on Definition \ref{def:subobject-regular}.
In the case of sets, one can identify an injection $m\colon S\mono X$ with its image, which is a subset of $X$.
Then the inclusion relation between subsets of $X$ can be characterized using injections as follows.
A monomorphism $i\colon A\mono X$ is included in $j\colon B\mono X$ if there exists a morphism $k\colon A\to B$ such that $i=j\circ k$.
If $i$ is included in $j$ and vice versa, we write $i\equiv j$.
Formally, a subobject is the $\equiv$-equivalence class of a mono.
In the category of represented spaces, any $\equiv$-equivalence class of a mono contains an inclusion map, which leads us to Definition \ref{def:subobject-regular}.
Similarly, a regular subobject is the $\equiv$-equivalence class of a regular mono.
\end{remark}

\begin{remark}
One may also call a regular subobject of $X$ a {\em $\neg\neg$-closed} subobject of $X$.
For $j\colon\mathcal{P}({\tt Code})\to\mathcal{P}({\tt Code})$, the {\em $j$-closure} $A^j$ of a subobject $A\mono X$ is defined as follows:
\begin{align*}
|A^j|=|A|, & & E_{A^j}(x)=j(E_A(x))\land E_X(x),
\end{align*}
where recall that $E_A(x)$ is the set of all names of $x\in A$.
A subobject $A\mono X$ is {\em $j$-closed} if the $j$-closure $A^j\mono X$ is equivalent to $A\mono X$.

Then consider $\neg\neg\colon\mathcal{P}({\tt Code})\to\mathcal{P}({\tt Code})$ defined by $\neg\neg U=\mathcal{P}({\tt Code})$ if $U\not=\emptyset$; otherwise $\neg\neg U=\emptyset$.
One can easily see that a subobject is $\neg\neg$-closed iff it is regular.
To comment on the background of this notion, it is the closure by the universal closure operator obtained from the double negation topology.
\end{remark}

\begin{remark}
Example \ref{exa:subobject-fin-def} gives a direct definition of the function space $\om^\om$.
Alternatively, one may introduce $\om^\om$ as the exponential object ${\sf Nat}^{\sf Nat}$.
The notion of $\om^\om$ then makes sense even in ${\sf K}_1$ since the category of ${\sf K}_1$-represented spaces is cartesian closed, and in this case, ${\sf Nat}^{\sf Nat}$ is the space of all total computable functions.
Then one can formulate the definition of ${\sf Fin}$ within the system ${\sf K_1}$.
In fact, Proposition \ref{prop:subobject-fin-def-nonreg-basic} holds in ${\sf K}_1$ as well.
The details of this argument will be given later.
\end{remark}

\subsection{Lattice of subobjects}

Next, let us go a little further into the structure of the inclusion relation among subobjects.
Let us denote by ${\rm Sub}(X)$ the set of all subobjects of $X$.
One can easily check the following:

\begin{obs}
$({\rm Sub}(X),\subseteq)$ is a poset.
\end{obs}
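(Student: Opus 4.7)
The three poset axioms to verify are reflexivity, transitivity, and antisymmetry of $\subseteq$ on ${\rm Sub}(X)$ (with equality understood modulo the equivalence $\equiv$ introduced just above, which is how ${\rm Sub}(X)$ is formally defined as a set of $\equiv$-classes of monos).

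For reflexivity, I would observe that for any subobject $A$, the identity map $A \to A$ is a tracker of the inclusion $i\colon A \embed A$: the identity on ${\tt Code}$ is realizable in each of ${\sf K}_1$, ${\sf K}_2$, ${\sf KV}$, and it trivially sends any $A$-name of $x$ to an $A$-name of $x$. Hence $A \subseteq A$.

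For transitivity, suppose $A \subseteq B$ and $B \subseteq C$. Then the inclusion $i\colon A \embed B$ is tracked by some partial realizable $F$, and $j\colon B \embed C$ is tracked by some partial realizable $G$. Since partial realizable functions are closed under composition in each of the three coding systems, $G \circ F$ is partial realizable, and it tracks the inclusion $j \circ i\colon A \embed C$, which as a set-theoretic map is exactly the inclusion $A \embed C$. Hence $A \subseteq C$.

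For antisymmetry, note that $A \subseteq B$ and $B \subseteq A$ is precisely the defining condition of $A \equiv B$, so the required implication $A \subseteq B \text{ and } B \subseteq A \Rightarrow A = B$ holds in the quotient ${\rm Sub}(X) / {\equiv}$ that is implicit in the statement. I do not anticipate a genuine obstacle here; the only subtlety is being careful about whether the statement refers to the preorder on witnessed subsets or the poset of $\equiv$-classes, but either reading is handled by the same two observations (trackers compose; the identity tracks) plus the tautological reading of $\equiv$.
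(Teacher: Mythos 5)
Your proof is correct and is exactly the routine argument the paper has in mind (it leaves this Observation as an easy check with no written proof): reflexivity via the identity tracker, transitivity via composition of trackers, and antisymmetry holding modulo the equivalence $\equiv$, i.e., on $\equiv$-classes of monos as the paper's remark makes explicit. Nothing is missing.
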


In fact, in the category of represented spaces, one can see that a subobject poset is always a Heyting algebra.

\begin{prop}\label{prop:suboblect-poset-lattice}
The poset $({\rm Sub}(X),\subseteq)$ of subobjects of a represented space $X$ forms a Heyting algebra.
\end{prop}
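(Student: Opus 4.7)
The plan is to view subobjects as witnessed subsets (by the preceding observation) and exhibit each Heyting operation explicitly as a construction on witnessed subsets. For the top element $\top$ take $|\top|=|X|$ and declare $\langle \bullet,p\rangle$ to be a name whenever $p$ is an $X$-name. For the bottom $\bot$ take $|\bot|=\emptyset$. For binary meets, define $A\wedge B$ to have underlying set $|A|\cap|B|$ with names $\langle w_A,w_B,p\rangle$ where $\langle w_A,p\rangle$ is an $A$-name and $\langle w_B,p\rangle$ is a $B$-name of the same point; projections track $A\wedge B\subseteq A$ and $A\wedge B\subseteq B$, while pairing of trackers gives the greatest lower bound property. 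For binary joins, define $A\vee B$ with underlying set $|A|\cup|B|$ and names $\langle i,w,p\rangle$, $i\in\{0,1\}$, carrying the membership tag and the corresponding witness; the least upper bound property follows from realizable case analysis on the tag.

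The heart of the argument is the Heyting implication. Declare $\langle e,p\rangle$ to be a name of a point $x$ in $A\Rightarrow B$ iff $p\vdash_X x$ and, for every $A$-name $\langle w,p'\rangle$ of $x$, the value $e\ast\langle w,p'\rangle$ is a $B$-name of $x$. One checks that the underlying set $|A\Rightarrow B|$ thereby coincides with $\{x\in|X|:x\in|A|\to x\in|B|\}$: when $x\notin|A|$ the realizer condition is vacuous; when $x\in|A|\cap|B|$ a constant function returning a fixed $B$-name of $x$ serves; and when $x\in|A|\setminus|B|$ no realizer can succeed because no $B$-name exists. The inclusion $A\Rightarrow B\subseteq X$ is tracked by the projection onto $p$. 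The required adjunction $C\wedge A\subseteq B\iff C\subseteq A\Rightarrow B$ is then obtained by currying and uncurrying of realizable functions: a tracker $h$ sending a $(C\wedge A)$-name $\langle w_C,w_A,p\rangle$ to a $B$-name corresponds, via the $s$-$m$-$n$ theorem in ${\sf K}_1$ (resp.\ via continuous/computable exponentials on Baire space in ${\sf K}_2,{\sf KV}$), to a tracker $k$ sending a $C$-name $\langle w_C,p\rangle$ to $\langle e_{w_C,p},p\rangle$, where $e_{w_C,p}$ is the realizable function $\langle w_A,p'\rangle\mapsto h(\langle w_C,w_A,p\rangle)$, and vice versa.

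The main obstacle is bookkeeping rather than any deep idea. One must verify at each step that the constructed represented space is genuinely a subobject of $X$ in the sense of Definition~\ref{def:subobject-regular}, i.e.\ that the inclusion into $X$ is trackable; and that the currying/uncurrying step produces a \emph{total} realizable function on the entire code space of $C$-names, irrespective of whether the underlying point lies in $|A|$. Both conditions hold uniformly in each of the three coding systems ${\sf K}_1,{\sf K}_2,{\sf KV}$, since each admits a realizable currying combinator. Once these points are in place, the remaining lattice laws, the absorption laws, and the residuation law characterizing $\Rightarrow$ fall out routinely from the universal properties established above.
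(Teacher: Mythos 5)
Your overall strategy is the same as the paper's (explicit realizability constructions for $\uplus$, meet, and $\arr$, with a currying/uncurrying argument for the adjunction), but there is a genuine gap in how you set up the meet, and it propagates into the uncurrying step. You define a name of $x\in A\wedge B$ to be $\langle w_A,w_B,p\rangle$ with a \emph{single} $X$-name $p$ such that $\langle w_A,p\rangle$ is an $A$-name and $\langle w_B,p\rangle$ is a $B$-name. For a general witnessed subset the validity of a witness is tied to the particular underlying $X$-name it is paired with, so for $x\in|A|\cap|B|$ there need not exist any common $p$ working for both. Indeed, the canonical witnessed subset $A_f$ produced by the observation you invoke couples each witness $w$ to the one name $p=f(w)$; taking $A=A_f$ and $B=B_g$ where $f$ and $g$ output different $X$-names of the same point, no triple of your prescribed form exists for that point. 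Then $\delta_{A\wedge B}$ is not a surjection onto $|A|\cap|B|$ (so the space is not even well defined as stated), and in any case your $A\wedge B$ cannot be the greatest lower bound: the paper's witnessed intersection $A\nplus B$, whose names are pairs consisting of a \emph{full} $A$-name and a \emph{full} $B$-name each carrying its own $X$-name, is not included in it. The same illegitimate recombination of a witness with a different underlying name occurs in your uncurrying step: given a $C$-name $\langle w_C,p\rangle$, you define $e_{w_C,p}$ to send an $A$-name $\langle w_A,p'\rangle$ to $h(\langle w_C,w_A,p\rangle)$, but $\langle w_A,p\rangle$ need not be an $A$-name when $p'\neq p$, so $\langle w_C,w_A,p\rangle$ need not be a $(C\wedge A)$-name and $h$ may diverge or output something that is not a $B$-name.

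The repair is exactly what the paper does: let the meet pair up full names (so a name of $x\in A\nplus B$ is $\langle a,b\rangle$ with $a\vdash_A x$ and $b\vdash_B x$, each with its own underlying $X$-name), and let the implication realizer act on full $A$-names; then the adjunction goes through by sending a $C$-name $c$ to $\langle i(c),\lambda a.\,h(\langle c,a\rangle)\rangle$, where $i$ tracks $C\mono X$. (Alternatively, one could first replace every subobject by an equivalent ``saturated'' witnessed subset whose names are pairs of a full original name and an arbitrary $X$-name of the point, but this must be stated and proved; the observation you cite does not provide it, and your argument as written does not go through without it.) Your treatment of top, bottom, the join, and the description of the underlying set of $A\Rightarrow B$ are fine and agree with the paper.
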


This is a consequence of the fact that the category of represented spaces is a Heyting category, but it is important to give an explicit description of what the lattice and Heyting operations actually are.
Of course, as is well known, they are given in a form corresponding to the realizability interpretation.

\begin{definition}
Let $X,Y$ be subobjects of a represented space $Z$.
Then their {\em witnessed union} $X\wunion Y$ is defined as follows:
\begin{itemize}
\item The underlying set is $|X\wunion Y|=|X|\cup |Y|$.
\item $\langle i,p\rangle$ is a name of $x\in X\wunion Y$ iff, if $i=0$ then $p$ is a name of $x\in X$ else $p$ is a name of $x\in Y$.
\end{itemize}
\end{definition}

\begin{definition}
Let $X,Y$ be subobjects of a represented space $Z$.
Then their {\em witnessed intersection} $X\nplus Y$ is defined as follows:
\begin{itemize}
\item The underlying set is $|X\nplus Y|=|X|\cap |Y|$.
\item $\langle p,q\rangle$ is a name of $x\in X\nplus Y$ iff $p$ is a name of $x\in X$ and $q$ is a name of $x\in Y$.
\end{itemize}
\end{definition}


\begin{definition}
Let $X,Y$ be subobjects of a represented space $Z$.
Then the {\em implication} $X\arr Y$ is defined as follows:
\begin{itemize}
\item $\langle p,q\rangle$ is a name of $x\in X\arr Y$ iff $p$ is a name of $x\in Z$, and if $a$ is a name of $x\in X$ then $q\ast a$ is a name of $x\in Y$.
\item The underlying set is the set of all $x\in |Z|$ having $(X\arr Y)$-names.
\end{itemize}
\end{definition}

\begin{proof}[Proof of Proposition \ref{prop:suboblect-poset-lattice}]
We show that the witnessed union $\uplus$ and the intersection $\nplus$ give the join and the meet in any subobject poset.
Let $X,Y$ be subobjects of $Z$.
Then the inclusion maps $X\embed X\uplus Y$ and $Y\embed X\uplus Y$ are tracked by $a\mapsto\langle 0,a\rangle$ and $b\mapsto\langle 1,b\rangle$, respectively.
Now, let $S\mono Z$ be such that $X,Y\subseteq S$.
Then the inclusion maps $X\embed S$ and $Y\embed S$ are tracked by some $u$ and $v$, respectively.
Then consider the process that, given a name $\langle i,p\rangle$ of $x\in X\uplus Y$, if $i=0$ then returns $u(p)$ else $v(p)$.
Note that if $i=0$ then $p\vdash_X x$, so $u(p)\vdash_S x$.
Similarly, if $i\not=0$ then $p\vdash_Y x$, so $v(p)\vdash_Sx$.
Thus, in any case, the above process yields a name of $x\in S$.
This means that $X\uplus Y$ is included in $S$.
Hence, $X\uplus Y$ is the join of $X$ and $Y$ in the poset of subobjects of $Z$.

Next, the inclusion maps $X\nplus Y\embed X$ and $X\nplus Y\embed Y$ are tracked by projections $\pi_0$ and $\pi_1$, respectively.
Now, let $S\mono Z$ be such that $S\subseteq X,Y$.
Then the inclusion maps $S\embed X$ and $S\embed Y$ are tracked by some $u$ and $v$, respectively.
Then $p\mapsto\langle u(p),v(p)\rangle$ tracks the inclusion map $S\embed X\nplus Y$.
To see this, if $p\vdash_S x$ then $u(p)\vdash_Xx$ and $v(p)\vdash_Yx$, so $\langle u(p),v(p)\rangle\vdash_{X\nplus Y}x$.
This means that $S$ is included in $X\nplus Y$.
Hence, $X\nplus Y$ is the meet of $X$ and $Y$ in the poset of subobjects of $Z$.


To see that $\arr$ is the Heyting operation, for subobjects $A,B,C\mono Z$, first assume $A\subseteq B\arr C$, which is realized by $u$.
If $\pair{p,q}$ is a name of $x\in A\nplus B$ then, since $p$ is an $A$-name of $x$, $u(p)$ is a $(B\arr C)$-name of $x$ by our assumption.
If $u(p)$ is of the form $\pair{u_0(p),u_1(p)}$, as $q$ is a $B$-name of $x$, $u_1(p)\ast q$ is a $C$-name of $x$.
This shows $A\nplus B\subseteq C$.
Conversely, assume $A\nplus B\subseteq C$, which is realized by $u$.
Assume that $p$ is a name of $x\in A$.
As $A$ is a subobject of $Z$, the inclusion map $A\mono Z$ is tracked by some $i$, so $i(p)$ is a $Z$-name of $x$.
If $x\not\in B$ then anything is a $(B\arr C)$-name of $x$.
If $q$ is a name of $x\in B$ then $\langle p,q\rangle$ is a $(A\nplus B)$-name of $x$, so $u(p,q)$ is a $C$-name of $x$.
Hence, $\pair{i(p),\lambda q.u(p,q)}$ is a $(B\arr C)$-name of $x$.
Note that $\lambda q.u(p,q)$ is always defined, and thus $\lambda p.\pair{i(p),\lambda q.u(p,q)}$ tracks the inclusion $A\subseteq B\arr C$.
\end{proof}

\subsection{Quantifier}

One of our objectives is to analyze arithmetical and Borel hierarchies, especially in the latter case, it is useful to have the notion of countable union and countable intersection.
If a subobject lattice were complete, we could automatically obtain infinitary operations.
Of course, a lattice of regular subobjects in the category of represented spaces is always a complete Boolean algebra (since regular subobjects are merely subsets); however, the completeness of a subobject lattice strongly depends on what is chosen as a coding system.

%


\begin{prop}[${\sf K}_1$]
The poset $({\rm Sub}(X),\subseteq)$ of subobjects of a represented space $X$ is not necessarily a $\sigma$-complete lattice.
Indeed, $({\rm Sub}(\om),\subseteq)$ has neither $\sigma$-join nor $\sigma$-meet.
\end{prop}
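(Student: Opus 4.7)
The plan is to exploit the following basic observation about ${\sf K}_1$: every subobject $A\mono\om$ has a c.e.~underlying set, since $|A|$ is the range of the partial computable surjection $\delta_A\colon\om\pcolon|A|$. Consequently, any upper (resp.~lower) bound of a family in ${\rm Sub}(\om)$ must have a c.e.~underlying set containing (resp.~contained in) the union (resp.~intersection) of the underlying sets of the family. The strategy is to build countable families whose literal set-theoretic union/intersection is \emph{not} c.e., thereby blocking $\sigma$-joins/$\sigma$-meets.

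Fix a c.e.~noncomputable set $K\subseteq\om$, so that $K^c:=\om\setminus K$ is not c.e. For the $\sigma$-join, for each $n\in\om$ define $A_n$ by: if $n\notin K$, let $|A_n|=\{n\}$ with the constant representation $\delta_{A_n}(w)=n$; if $n\in K$, let $A_n$ be the empty subobject. Each $A_n$ is plainly a valid subobject, and $\bigcup_n|A_n|=K^c$. On the one hand, every upper bound $B$ satisfies $K^c\subseteq|B|$ with $|B|$ c.e.; on the other hand, for each c.e.~$U\supseteq K^c$ the subspace of $\om$ with underlying set $U$ (identity representation) is an upper bound, each inclusion $A_n\hookrightarrow U$ being tracked by the constant map $w\mapsto n$. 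Hence a $\sigma$-join $B^*$, if it existed, would satisfy $|B^*|=\bigcap\{U:U\supseteq K^c,\ U\text{ c.e.}\}$; but this intersection equals $K^c$ itself (for each $x\in K$, the cofinite set $\om\setminus\{x\}$ is c.e.~and contains $K^c$), forcing $|B^*|=K^c$, contradicting that $|B^*|$ is c.e.

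The $\sigma$-meet case is dual: let $B_n$ be the subspace with underlying set $\om\setminus\{n\}$ if $n\in K$, and $\om$ itself otherwise. Then $\bigcap_n|B_n|=K^c$, every lower bound has c.e.~underlying set contained in $K^c$, and for each c.e.~$V\subseteq K^c$ the subspace with underlying set $V$ is a lower bound. A hypothetical $\sigma$-meet $C^*$ would thus have $|C^*|=\bigcup\{V:V\subseteq K^c,\ V\text{ c.e.}\}=K^c$ (each singleton in $K^c$ is c.e.), again contradicting that $|C^*|$ is c.e.

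The main obstacle is conceptual rather than technical: one must pinpoint the right invariant (c.e.-ness of $|A|$) and then choose $K$ so that $K^c$ arises as the union/intersection of an explicit non-uniform family, with the noncomputability of $K$ supplying the required failure of c.e.-ness. Once the families $A_n$, $B_n$ are in hand, the verification reduces to the elementary recursion-theoretic fact that a c.e.~set is computable iff co-c.e., together with routine checks that the constant/identity maps serve as valid trackers for the stated inclusions.
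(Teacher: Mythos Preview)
Your argument rests on the claim that every subobject $A \rightarrowtail \omega$ has c.e.\ underlying set because ``$|A|$ is the range of the partial computable surjection $\delta_A$''. This is false: the representation $\delta_A$ is merely a partial surjection, with no computability requirement whatsoever. The computability constraint enters only through the tracker of the inclusion $A \hookrightarrow \omega$, which guarantees that some partial computable $F$ satisfies $F(p) = \delta_A(p)$ on $\mathrm{dom}(\delta_A)$; but $\mathrm{dom}(\delta_A)$ itself may be an arbitrary subset of $\omega$, so this does not force $|A|$ to be c.e. Concretely, for \emph{any} $S \subseteq \omega$ the subspace with $|A| = S$ and $\delta_A = \mathrm{id}\upharpoonright S$ is a subobject of $\omega$ (the inclusion is tracked by the identity), and its underlying set is $S$.

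This error is fatal, not merely cosmetic. In your $\sigma$-meet construction, the subspace of $\omega$ with underlying set $K^c$ is in fact the $\sigma$-meet of your family $(B_n)_{n\in\omega}$: it is a lower bound (the identity tracks each inclusion into $B_n$), and for any lower bound $C$ one has $|C| \subseteq K^c$, with the inclusion $C \hookrightarrow K^c$ tracked by the very same computable function that tracks $C \hookrightarrow \omega$. So your family actually \emph{does} have a $\sigma$-meet, and the argument collapses.

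The paper's proof proceeds along entirely different lines: it exploits the freedom in the \emph{representation} rather than in the underlying set. For the $\sigma$-join it uses the family of all singletons $\{n\}$; given any putative join $I$, it builds another upper bound $J$ whose representation encodes the Turing jump of a function extracted from the names in $I$, so that any tracker for $I \subseteq J$ would compute that jump. The $\sigma$-meet argument similarly diagonalises along a strictly increasing Turing chain of representations. The obstruction is not about which underlying sets can occur---any set can---but about the impossibility of a single computable tracker uniformly mediating between arbitrarily complex representations.
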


\begin{proof}
For the non-existence of $\sigma$-join, for each $n\in\om$, think of the singleton $\{n\}$ as a subspace of $\om$.
Let $I\mono\om$ be such that $\{n\}\subseteq I$ in ${\rm Sub}(\om)$ for any $n\in\om$.
Then $|I|=\om$.
Let $E_I(n)$ be the set of all $I$-names of $n\in \om$.
Since $E_I(n)\not=\emptyset$ for all $n\in\om$, take the least element $s(n)\in E_I(n)$.
Construct a new subobject $J\mono\om$ as follows:
The underlying set is $|J|=\om$, and the only $J$-name of $n\in\om$ is $s'(n)$, where $s'$ be the Turing jump of $s$.
Clearly, $\{n\}\subseteq J$ holds in ${\rm Sub}(\om)$ for any $n\in\om$.
If $I\subseteq J$ in ${\rm Sub}(\om)$ then some computable function $f$ tracks the inclusion $I\embed J$.
Since $s(n)$ is an $I$-name of $n$, $f(s(n))$ must be a $J$-name of $n$, which means $f(s(n))=s'(n)$.
This implies that the jump $s'$ is Turing reducible to $s$, a contradiction.
Hence, $I\not\subseteq J$, which shows that any $I$ cannot be a $\sigma$-join of the singletons $\{n\}$.

For the non-existence of $\sigma$-meet, consider a sequence $g_0<_Tg_1<_T\dots$, where $\leq_T$ is Turing reducibility.
For each $i\in\om$, consider a subobject $A_i\mono\om$ such that its underlying set is $|A_i|=\om$ and $g_i(n)$ is a unique name of $n\in A_i$.
Assume that $B\mono\om$ is a subobject such that $B\subseteq A_i$ for any $i\in\om$.
We construct a subobject $C\mono\om$ such that $C\subseteq A_i$ for any $i\in\om$ but $C\not\subseteq B$.
Put $|C|=\om$.
We inductively define an increasing sequence $(n_e)_{e\in\om}$ with $n_0=0$.
Assume that $n_e$ has already been defined.
If the $e$th partial computable function $\varphi_e$ is not total then put $n_{e+1}=n_e+1$.
If $\varphi_e$ is total, we claim that there exists $n\geq n_e$ such that $\varphi_e(\langle g_i(n)\rangle_{i\leq e})$ is not a name of $n\in B$.
This is because, since $B\subseteq A_{e+1}$, if one knows a name of $n\in B$ one can compute $g_{e+1}(n)$.
Thus, if the claim fails
then we get $g_{e+1}\leq_T\bigoplus_{i\leq e}g_i\equiv_T g_e$, a contradiction.
Define $n_{e+1}=n+1$ where $n$ is a number in the above claim.
Then declare that, if $n_e\leq n<n_{e+1}$, then $\langle g_i(n)\rangle_{i\leq e}$ is a unique name of $n\in C$.
For almost all $n$, the $i$th coordinate of a name of $n\in C$ yields a name of $A_i$, so we have $C\subseteq A_i$.
Moreover, the above construction ensures that $\varphi_e$ cannot be a tracker of the inclusion $C\subseteq B$ for any $e$, which means that $C\not\subseteq B$.
Consequently, any $B$ cannot be a $\sigma$-meet of $(A_i)_{i\in\om}$.
\end{proof}

Of course, this only states that there are no external countable operations, and there is no problem if one uses internal countable operations or quantifiers.
In fact, the arithmetical hierarchy is usually defined as the hierarchy of number quantifiers.

It is known that the existential quantifier and the universal quantifier are characterized by the following adjoint rules:
\begin{align*}
A(x,y)\vdash B(y)&\iff \exists xA(x,y)\vdash B(y)\\
B(y)\vdash A(x,y)&\iff B(y)\vdash\forall xA(x,y)
\end{align*}

In categorical logic, the existential quantification and the universal quantification are introduced as follows:
If $A$ is a subobject of $X\times Y$, then $\exists^XA$ and $\forall^XA$ are subobjects of $Y$ such that for any subobject $B\mono Y$ the following holds in the subobject posets:
\begin{align*}
A\subseteq X\times B&\iff \exists^XA\subseteq B\\
X\times B\subseteq A&\iff B\subseteq\forall^XA
\end{align*}

As already mentioned, the category of represented spaces is a Heyting category, so it has interpretations of the existential quantification and the universal quantification.
To develop our theory, it is useful to have explicit descriptions of these notions.

\begin{definition}
Let $X$ be a subobject of a represented space $I\times Z$.
Then its {\em witnessed projection} $\exists^IX\mono Z$ is defined as follows:
\begin{itemize}
\item The underlying set is $|\exists^IX|=\{z\in Z:(\exists i\in I)\;(i,z)\in X\}$.
\item $\langle p,q\rangle$ is a name of $z\in \exists^IX$ iff $p$ is a name of some $i\in I$ and $q$ is a name of $z\in Z$ such that $(i,z)\in X$.
\end{itemize}
\end{definition}

From a practical standpoint, we also want to use the notation $\exists^ZX$ for the projection of $X\mono I\times Z$ into $I$.
Then, however, the notation $\exists^IX$ for $X\mono I\times I$ is ambiguous as to what it is quantifying.
In order to describe it unambiguously, we need to specify a projection, but it is often cumbersome and unintuitive.
In some cases, rather, a collection $(X_i)_{i\in I}$ of subobjects is given first, and the existential quantification is defined as its (witnessed) union.
To be more explicit:
\begin{definition}
Let $I$ be a represented space, and for each $i\in I$ let $X_i$ be a subobject of a represented space $Z$.
Then their {\em witnessed union} $\biguplus_{i\in I}X_i$ is defined as follows:
\begin{itemize}
\item The underlying set is $|\biguplus_{i\in I}X_i|=\bigcup_{i\in I}|X_i|$.
\item $\langle u,p\rangle$ is a name of $x\in \biguplus_{i\in I}X_i$ iff, if $u$ is a name of $j\in I$ then $p$ is a name of $x\in X_j$.
\end{itemize}
\end{definition}

Note that for a subobject $X\mono I\times Z$, we have $\exists^IX=\biguplus_{i\in I}X_i$, where $X_i$ is the subspace of $I\times Z$ whose underlying set is $\{z\in Z:(i,z)\in X\}$.


\begin{definition}
Let $I$ be a represented space, and for each $i\in I$ let $X_i$ be a subobject of a represented space $Z$.
Then their {\em witnessed intersection} $\bignplus_{i\in I}X_i$ is defined as follows:
\begin{itemize}
\item $p$ is a name of $x\in \bignplus_{i\in I}X_i$ iff, if $z$ is a name of $i\in I$ then $p\ast z$ is a name of $x\in X_i$.
\item The underlying set is the set of all $x\in |Z|$ having $\bignplus_{i\in I}X_i$-names.
\end{itemize}
\end{definition}

These notions plays the roles of the existential quantification and the universal quantification, respectively; that is, for any subobjects $A\mono I\times Z$ and $B\mono Z$,
\begin{align*}
A\subseteq I\times B&\iff \biguplus_{i\in I}A_i\subseteq B\\
I\times B\subseteq A&\iff B\subseteq\bignplus_{i\in I}A_i
\end{align*}
where $A_i$ is the subobject of $Z$ such that $|A_i|=\{z\in |Z|:(i,z)\in |A|\}$ and a name of $z\in A_i$ is a name of $(i,z)\in A$.

\subsection{Reducibility}

We will see later, for example, that ${\sf Fin}$ is not $\Sigma^0_2$-complete.
Of course, in order to define the notion of completeness, we need a notion of reducibility, which requires some discussion.

\begin{definition}\label{def:usual-many-one}
For subsets $A,B\subseteq\om$, we say that $A$ is {\em many-one reducible to} $B$ (written $A\leq_mB$) if there exists a computable function $\varphi\colon\om\to\om$ such that for any $n\in\om$
\[n\in A\iff \varphi(n)\in B.\]

For a collection $\Gamma$ of subsets of $\om$, a set $A\subseteq\om$ is {\em $\Gamma$-hard} if $B\leq_mA$ for any $B\in\Gamma$.
If $A\in\Gamma$ also holds, then $A$ is called {\em $\Gamma$-complete}.
\end{definition}

\begin{definition}\label{def:Wadge-reducibility}
For subsets $A,B\subseteq\om^\om$, we say that $A$ is {\em Wadge reducible to} $B$ (written $A\leq_WB$) if there exists a continuous function $\varphi\colon\om^\om\to\om^\om$ such that for any $x\in\om^\om$
\[x\in A\iff \varphi(x)\in B.\]

For a collection $\Gamma$ of subsets of $\om^\om$, a set $A\subseteq\om$ is {\em $\Gamma$-hard} if $B\leq_WA$ for any $B\in\Gamma$.
If $A\in\Gamma$ also holds, then $A$ is called {\em $\Gamma$-complete}.
\end{definition}

These notions of reducibility are for subsets and can be easily extended to subspaces (or regular subobjects) of represented spaces, but it is less clear how to extend them to non-regular subobjects.
After a short time of gazing at the definitions, one may find that both definitions of reducibility can be rewritten as $A=\varphi^{-1}[B]$.
That is, this is just a pullback.
Using the notion of pullback, it is natural to extend the notion of many-one/Wadge reducibility as follows:

\begin{definition}\label{def:pullback-many-one-reducibility}
Let $X,Y$ be objects in a category $\mathcal{C}$ having pullbacks.
A mono $A\monoedge{i} X$ is {\em many-one reducible to} $B\monoedge{j}Y$ if $A\monoedge{i}X$ is a pullback of $B\monoedge{j}Y$ along some morphism $\varphi\colon X\to Y$.
\[
\xymatrix{
A \ar[rr] \ar@{ >->}[d]  \pushoutcorner & & B\ar@{ >->}[d] \\
X \ar[rr]_\varphi & & Y 
}
\]
\end{definition}

Recall an explicit description of pullback in the category of represented spaces:

\begin{definition}\label{def:pullback-in-represented-spaces}
For represented spaces $X$ and $Y$, for a subobject $B\mono Y$ and a morphism $\varphi\colon X\to Y$, the {\em pullback} $\varphi^\ast B$ of $B$ along $\varphi$ is defined as follows:
\begin{itemize}
\item The underlying set is $|\varphi^\ast B|=\varphi^{-1}[|B|]=\{x\in|X|:\varphi(x)\in|B|\}$.
\item A name of $x\in\varphi^\ast B$ is a pair $\langle p,q\rangle$ of a name $p$ of $x\in X$ and a name $q$ of $\varphi(x)\in B$.
\end{itemize}

If $B\mono Y$ is regular, then so is $\varphi^\ast B\mono X$, and the information of $q$ in the name $\varphi^\ast B$ is unnecessary.
\end{definition}

The definition of many-one reducibility in the category of represented spaces can be explicitly written as follows:

\begin{definition}\label{def:represented-many-one-reducibility}
Let $X$ and $Y$ be represented spaces.
A subobject $A\mono X$ is {\em many-one reducible to} $B\mono Y$ if there exist a morphism $\varphi\colon X\to Y$ and partial realizable functions $r_-,r_+\pcolon{\tt Code}\to{\tt Code}$ such that the following hold:
\begin{enumerate}
\item For any $x\in |X|$, $x\in |A|$ if and only if $\varphi(x)\in |B|$.
\item If $p$ is an $A$-name of $x\in|A|$ then $r_-(p)$ is a $B$-name of $\varphi(x)\in |B|$.
\item If $p$ is an $X$-name of $x\in|A|$ and $q$ is a $B$-name of $\varphi(x)\in |B|$ then $r_+(p,q)$ is an $A$-name of $x\in |A|$.
\end{enumerate}
\end{definition}

In this case, we write $A\leq_{\sf m}B$.
One can think of this as the realizability interpretation of many-one reducibility; that is, $r_-$ is a realizer for ``$x\in A\implies \varphi(x)\in B$'', and $r_+$ is a realizer for ``$\varphi(x)\in B\implies x\in A$''.

\begin{obs}
Definition \ref{def:represented-many-one-reducibility} is equivalent to Definition \ref{def:pullback-many-one-reducibility} in the category of represented spaces.
\end{obs}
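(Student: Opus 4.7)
The plan is to unpack the pullback in the category of represented spaces by means of its explicit description in Definition \ref{def:pullback-in-represented-spaces}, and then observe that the existence of $\varphi, r_-, r_+$ as in Definition \ref{def:represented-many-one-reducibility} is precisely the bookkeeping that witnesses $A\equiv\varphi^\ast B$ as subobjects of $X$. Since being a pullback of $B\mono Y$ along $\varphi$ is uniquely characterized (up to equivalence of subobjects) by $\varphi^\ast B$, the whole equivalence reduces to verifying two inclusions in ${\rm Sub}(X)$. First I would note that condition (1) of Definition \ref{def:represented-many-one-reducibility}, namely $|A|=\varphi^{-1}[|B|]$, is exactly $|A|=|\varphi^\ast B|$, so on underlying sets the two definitions agree; the content lies entirely in the realizability data for the two inclusions $A\subseteq\varphi^\ast B$ and $\varphi^\ast B\subseteq A$.

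For the direction from Definition \ref{def:pullback-many-one-reducibility} to Definition \ref{def:represented-many-one-reducibility}, I would assume $A\equiv\varphi^\ast B$ in ${\rm Sub}(X)$. The inclusion $A\subseteq\varphi^\ast B$ is tracked by some realizable $f$, so given an $A$-name $p$ of $x$, $f(p)$ has the form $\pair{p',q}$ with $p'$ an $X$-name of $x$ and $q$ a $B$-name of $\varphi(x)$. Defining $r_-(p)$ to be the second projection of $f(p)$ yields clause (2). The opposite inclusion $\varphi^\ast B\subseteq A$ is tracked by some $g$ which sends any $\pair{p,q}$ (with $p$ an $X$-name of $x$ and $q$ a $B$-name of $\varphi(x)$) to an $A$-name of $x$; setting $r_+(p,q):=g(\pair{p,q})$ gives clause (3).

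Conversely, assume $\varphi, r_-, r_+$ are as in Definition \ref{def:represented-many-one-reducibility}. Since $A$ is a subobject of $X$, fix a tracker $i$ of the inclusion $A\mono X$. Then $p\mapsto\pair{i(p),r_-(p)}$ sends an $A$-name of $x$ to a valid $\varphi^\ast B$-name of $x$ by clause (2), witnessing $A\subseteq\varphi^\ast B$. On the other side, clause (3) directly states that $\pair{p,q}\mapsto r_+(p,q)$ tracks $\varphi^\ast B\subseteq A$. Combined with the equality of underlying sets from clause (1), this gives $A\equiv\varphi^\ast B$, i.e., $A\mono X$ is (equivalent to) the pullback of $B\mono Y$ along $\varphi$.

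The main obstacle is essentially bookkeeping rather than a genuine difficulty, but one must keep in mind that a name of $x\in\varphi^\ast B$ bundles together \emph{both} an $X$-name of $x$ and a $B$-name of $\varphi(x)$; this is why the $X$-name component enters via the subobject structure of $A\mono X$ (through the tracker $i$) rather than being directly produced by $r_-$, and it is the only place where one uses more than the literal data $(\varphi,r_-,r_+)$.
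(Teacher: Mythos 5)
Your proof is correct and follows essentially the same route as the paper: identify the pullback with the explicit $\varphi^\ast B$ of Definition \ref{def:pullback-in-represented-spaces} and read off $(r_-,r_+)$ from trackers of the two inclusions $A\subseteq\varphi^\ast B$ and $\varphi^\ast B\subseteq A$ (and conversely). Your use of a tracker $i$ of the inclusion $A\mono X$ to supply the first component of a $\varphi^\ast B$-name is exactly the right bookkeeping — the paper's own write-up invokes a tracker of $\varphi$ at that point, a minor slip, since that component must be an $X$-name of $x$ rather than a $Y$-name of $\varphi(x)$.
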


\begin{proof}
Definition \ref{def:represented-many-one-reducibility} states that the subobject $A\mono X$ is equivalent to the pullback $\varphi^\ast B\mono X$ in Definition \ref{def:pullback-in-represented-spaces}.
Indeed, if $u$ is a tracker of $\varphi$, then $p\mapsto \pair{ u(p),r_-(p)}$ tracks $A\subseteq \varphi^\ast B$, and $r_+$ tracks $\varphi^\ast B\subseteq A$.
Thus, $A\mono X$ is also a pullback of $B\mono Y$ along $\varphi$.
Conversely, if a subobject $A\mono X$ is a pullback of $B\mono Y$ along $\varphi$, then one can easily see that $A$ is equivalent to $\varphi^\ast B$.
\end{proof}

\begin{example}
Let ${\bf Rep}({\sf K}_1)$ be the category of represented spaces over Kleene's first algebra ${\sf K}_1$.
Then many-one reducibility for regular subobjects of $\om$ in ${\bf Rep}(K_1)$ is exactly the usual {\em many-one reducibility}.
\end{example}

\begin{example}
Let ${\bf Rep}({\sf K}_2)$ be the category of represented spaces over Kleene's second algebra ${\sf K}_2$.
Then many-one reducibility for regular subobjects of $\om^\om$ in ${\bf Rep}(K_2)$ is exactly the {\em Wadge reducibility}.
\end{example}

\begin{example}\label{exa:computable-Levin-reducibility-b}
Many-one reducibility for subobjects of $\om$ in ${\bf Rep}(K_1)$ can be thought of as {\em computable Levin reducibility} (cf.~Definition \ref{def:Levin-reducibility}).

To be more explicit, a witnessed subset $A\mono X$ is many-one reducible to a witnessed subset $B\mono Y$ if and only if there exist a morphism $\varphi\colon X\to Y$ and partial realizable functions $r_-,r_+$ such that for any name $p$ of $x\in X$ and $v,w\in{\sf Code}$ the following holds:
\begin{enumerate}
\item $x\in |A|$ if and only if $\varphi(x)\in |B|$.
\item If $v$ is a witness for $x\in|A|$ then $r_-(v,p)$ is a witness for $\varphi(x)\in |B|$.
\item If $w$ is a witness for $\varphi(x)\in|B|$ then $r_+(w,p)$ is a witness for $x\in |A|$.
\end{enumerate}
\end{example}

Next, let us discuss an alternative to the definition of many-one reducibility.
Considering the ``meaning'' of many-one reducibility, it might be a bit questionable whether Definition \ref{def:represented-many-one-reducibility} is appropriate.
The ``meaning'' of many-one reducibility $A\leq_mB$ is that it is sufficient to know information about $B$ in order to compute information about $A$.
That is, the computationally correct understanding of $A\leq_mB$ might not be ``$x\in A\iff\varphi(x)\in B$,'' but
\[\varphi(x)\in B\implies x\in A;\mbox{ and }\varphi(x)\not\in B\implies x\not\in A.\]

The conditions (1) and (3) of Definition \ref{def:represented-many-one-reducibility} reflect that meaning, but the condition (2) is the opposite.
Therefore, from a computational point of view, it would be correct to remove the condition (2).
In fact, for reducibility for function problems in computational complexity theory (e.g., the definition of {\rm FNP}-completeness), this form of reducibility is often considered instead of Levin reducibility.

\begin{definition}
We say that a subobject $A\mono X$ is {\em demi-many-one reducible to} $B\mono Y$ (written $A\leq_{\sf m}'B$) if the conditions (1) and (3) in Definition \ref{def:represented-many-one-reducibility} hold.
\end{definition}

\begin{obs}
$A\leq_{\sf m}B\implies A\leq_{\sf m}'B$.
\end{obs}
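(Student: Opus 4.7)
The statement is essentially a tautology at the level of definitions, so the plan is to simply point out that demi-many-one reducibility asks for a strict subset of the data that many-one reducibility provides. Given $A\leq_m B$, I would take the witnessing triple $(\varphi,r_-,r_+)$ from Definition \ref{def:represented-many-one-reducibility} and discard $r_-$, then verify that $(\varphi,r_+)$ satisfies exactly the two requirements (1) and (3) of demi-many-one reducibility.

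Concretely, the first step is to observe that condition (1) --- the biconditional $x\in|A|\iff\varphi(x)\in|B|$ --- is identical in both definitions, so it transfers without any modification. The second step is to observe that condition (3), which says that $r_+(p,q)$ turns an $X$-name $p$ of $x$ together with a $B$-name $q$ of $\varphi(x)$ into an $A$-name of $x$, is also identical in both definitions and therefore transfers without modification. Since the definition of $\leq_m'$ demands nothing else, this is all that needs to be checked.

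There is no real obstacle here; the observation is a book-keeping remark emphasizing that the extra content of Levin-style many-one reducibility lies precisely in the forward realizer $r_-$ (the ``$x\in A\implies \varphi(x)\in B$'' direction in the realizability reading), while the demi-version keeps only the reverse realizer $r_+$ that matches the intuitive computational meaning of many-one reducibility discussed in the paragraph preceding the observation. I would therefore present the proof in one or two sentences, without introducing any new construction.
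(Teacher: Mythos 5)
Your proposal is correct and matches the paper's reasoning: the paper states this observation without proof precisely because $\leq_m'$ is defined by keeping only conditions (1) and (3) of Definition \ref{def:represented-many-one-reducibility}, so any witnessing triple $(\varphi,r_-,r_+)$ for $A\leq_mB$ yields a witness $(\varphi,r_+)$ for $A\leq_m'B$ by discarding $r_-$. Nothing further is needed.
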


Of course, removing the condition (2) would be unnatural in the categorical setting.
Recall that the $\neg\neg$-closure $A^{\neg\neg}\mono X$ of a subobject $A\mono X$ as the unique regular subobject with $|A^{\neg\neg}|=|A|$.
In other words, it is the $\subseteq$-least regular subobject including $A$.

\begin{obs}
$A\leq_{\sf m}'B$ iff there exists a morphism $\varphi$ such that $\varphi^\ast B\subseteq A\subseteq(\varphi^\ast B)^{\neg\neg}$.
\end{obs}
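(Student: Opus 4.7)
The plan is to unfold the definitions of $\varphi^\ast B$ and of the $\neg\neg$-closure, and then observe that the two conditions defining $\leq_m'$ correspond precisely to the two required inclusions. First I would record what the objects look like: by Definition \ref{def:pullback-in-represented-spaces}, $|\varphi^\ast B|=\varphi^{-1}[|B|]$ and a name of $x\in\varphi^\ast B$ is a pair $\pair{p,q}$ where $p$ is an $X$-name of $x$ and $q$ is a $B$-name of $\varphi(x)$. The $\neg\neg$-closure $(\varphi^\ast B)^{\neg\neg}$ has the same underlying set $\varphi^{-1}[|B|]$, and since it is regular (equivalent to a subspace), its names of $x$ are exactly the $X$-names of $x$.

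For the forward direction, assume $A\leq_m'B$ via $\varphi$ and $r_+$. Condition (1) gives $|A|=\varphi^{-1}[|B|]$, so all three underlying sets $|\varphi^\ast B|$, $|A|$, $|(\varphi^\ast B)^{\neg\neg}|$ coincide. The inclusion $\varphi^\ast B\subseteq A$ is tracked by $\pair{p,q}\mapsto r_+(p,q)$, using condition (3). The inclusion $A\subseteq(\varphi^\ast B)^{\neg\neg}$ is tracked by the tracker of $A\mono X$ (which exists because $A$ is a subobject of $X$), since any $X$-name of $x\in|A|$ is a name of $x$ in $(\varphi^\ast B)^{\neg\neg}$.

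For the reverse direction, assume the chain of inclusions. Equality of underlying sets $|A|=\varphi^{-1}[|B|]$ follows immediately, which is condition (1). For condition (3), suppose $p$ is an $X$-name of some $x\in |A|$ and $q$ is a $B$-name of $\varphi(x)$. Then $\pair{p,q}$ is a name of $x\in \varphi^\ast B$, and applying the tracker $u$ of $\varphi^\ast B\subseteq A$ yields $r_+(p,q):=u(\pair{p,q})$, which is an $A$-name of $x$.

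There is no real obstacle; the whole argument is a bookkeeping exercise once the descriptions of $\varphi^\ast B$ and $(\varphi^\ast B)^{\neg\neg}$ are in hand. The only mild subtlety to flag is that the inclusion $A\subseteq(\varphi^\ast B)^{\neg\neg}$ in the forward direction uses, crucially, that $A$ is assumed to be a subobject of $X$ (so an $A$-name can be realizably converted to an $X$-name), which is exactly what makes the $\neg\neg$-closure the right target in the statement.
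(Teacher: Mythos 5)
Your proof is correct and is exactly the definition-unfolding argument the paper intends (it states this as an unproved observation): condition (1) of demi-reducibility matches the equality of underlying sets, condition (3) gives the tracker for $\varphi^\ast B\subseteq A$, and the tracker of $A\mono X$ handles $A\subseteq(\varphi^\ast B)^{\neg\neg}$ since the $\neg\neg$-closure's names are just the $X$-names. Your closing remark correctly identifies the one point where the hypothesis that $A$ is a subobject of $X$ is actually used.
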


\begin{obs}
If $A,B\mono X$ are regular then $A\leq_{\sf m}B$ iff $A\leq_{\sf m}'B$.
\end{obs}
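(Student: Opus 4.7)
The plan is to observe that one implication is trivial and carry out the converse by exploiting regularity of $B$. The forward direction $A \leq_m B \Rightarrow A \leq_m' B$ is exactly the content of the preceding observation, so the real work is the converse.

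Assume $A \leq_m' B$, witnessed by a morphism $\varphi$ and a realizer $r_+$ satisfying conditions (1) and (3) of Definition \ref{def:represented-many-one-reducibility}. I would manufacture the missing $r_-$ for condition (2): from an $A$-name $p$ of an arbitrary $x \in |A|$, compute a $B$-name of $\varphi(x)$. My construction is a three-step composition. First, since $A$ is a subobject, the inclusion $A \mono X$ has a tracker, which converts $p$ into an $X$-name of $x$. Second, apply a tracker of $\varphi$ to obtain an $X$-name of $\varphi(x)$. Third, condition (1) ensures $\varphi(x) \in |B|$, so by regularity of $B$ the inclusion $B \mono X$ has a tracked partial inverse; applying its tracker to the $X$-name of $\varphi(x)$ produces a $B$-name of $\varphi(x)$. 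Composing these three tracked operations realizes the desired $r_-$, and together with $\varphi$ and $r_+$ this witnesses $A \leq_m B$.

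I do not expect any substantive obstacle. The observation is really saying that the single piece of data separating demi-reducibility from many-one reducibility is the ability to lift an $X$-name of a point in $|B|$ back to a $B$-name, and this lift is exactly what regularity of $B$ supplies. In particular, regularity of $A$ is nowhere used in the argument; the hypothesis on $A$ could be dropped without affecting the proof.
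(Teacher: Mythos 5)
Your proof is correct and is essentially the argument the paper leaves implicit for this observation: the only datum separating $\leq_m'$ from $\leq_m$ is the inner reduction $r_-$, and composing a tracker of the inclusion $A\mono X$, a tracker of $\varphi$, and the tracked partial inverse of $B\mono X$ supplied by regularity of $B$ (legitimate since condition (1) puts $\varphi(x)\in|B|$) produces exactly such an $r_-$. Your closing remark that regularity of $A$ is not needed is also consistent with the paper, whose Lemma \ref{lem:n-regular-downclosed} shows that $A$ is automatically regular whenever $A\leq_m'B$ and $B$ is regular.
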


\begin{remark}
One may notice that demi-many-one reducibility closely resembles Weihrauch reducibility \cite{pauly-handbook}, which has been studied in depth in computable analysis.
In fact, demi-many-one reducibility corresponds to Weihrauch reducibility for ``hardest totalizations''.
Here, the hardest totalization $F^{ht}$ of $F\pcolon X\tto Y$ is the totalization of $F$ such that if $x\not\in{\rm dom}(F)$ then $F^{ht}(x)=\emptyset$.
\end{remark}

\subsection{Structure}

Let us analyze the structure of many-one reducibility.

\begin{obs}
$\leq_{\sf m}$ and $\leq_{\sf m}'$ form preorders.
\end{obs}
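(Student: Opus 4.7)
The plan is to verify reflexivity and transitivity directly from Definition \ref{def:represented-many-one-reducibility}; the case of $\leq_m'$ then follows by simply ignoring condition (2). Conceptually, the cleanest justification is via the pullback characterization of Definition \ref{def:pullback-many-one-reducibility}: reflexivity corresponds to the identity square being a pullback, and transitivity to the pasting lemma that a pullback of a pullback is a pullback. Below I sketch the explicit realizer-level verification, since it is the one that the reader at this point in the paper is most likely to want to see.

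For reflexivity $A\leq_m A$, take $\varphi=\mathrm{id}_X$ (tracked by a coded identity), set $r_-$ to be an identity realizer (so if $p$ is an $A$-name of $x$ then $r_-(p)=p$ is an $A$-name of $\varphi(x)=x$), and let $r_+(p,q)=q$, so that given an $X$-name $p$ of $x$ together with an $A$-name $q$ of $\varphi(x)=x$, one returns the $A$-name $q$. All three clauses of Definition \ref{def:represented-many-one-reducibility} hold trivially.

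For transitivity, suppose $A\mono X$ reduces to $B\mono Y$ via $(\varphi,r_-,r_+)$ and $B\mono Y$ reduces to $C\mono Z$ via $(\psi,s_-,s_+)$, and let $u$ be a tracker of $\varphi$. Take the composite morphism $\psi\circ\varphi\colon X\to Z$, new forward realizer $s_-\circ r_-$, and new backward realizer
\[
t_+(p,q)\;=\;r_+\bigl(p,\;s_+(u(p),q)\bigr).
\]
Condition (1) follows from chaining the two biconditionals. For condition (2), if $p$ is an $A$-name of $x$, then $r_-(p)$ is a $B$-name of $\varphi(x)$, and so $s_-(r_-(p))$ is a $C$-name of $\psi(\varphi(x))$. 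For condition (3), if $p$ is an $X$-name of $x\in|A|$ and $q$ is a $C$-name of $\psi(\varphi(x))$, then $u(p)$ is a $Y$-name of $\varphi(x)\in|B|$, hence $s_+(u(p),q)$ is a $B$-name of $\varphi(x)$, and therefore $r_+(p,s_+(u(p),q))$ is an $A$-name of $x$, as required.

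For $\leq_m'$, the same morphism $\psi\circ\varphi$ and the same $t_+$ witness transitivity, with reflexivity handled by the same $r_+=\pi_1$; we simply discard the role of $r_-,s_-$. There is essentially no obstacle: the only thing to be careful about is that the realizer $s_+$ demands a $Y$-name of $\varphi(x)$ as its first argument, which is precisely why one needs to pass $u(p)$ through rather than $p$ itself. This is the single place where the tracker of $\varphi$ is invoked, and it is what makes the argument ``compose pullbacks'' in the realizability setting.
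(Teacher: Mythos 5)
Your proof is correct and follows essentially the same route as the paper: the explicit realizer-level verification with the composite $\psi\circ\varphi$ and the outer reduction $r_+(p,s_+(u(p),q))$ is exactly the construction given there, including the observation that the only nontrivial point is feeding the tracked value $u(p)$ (rather than $p$) into $s_+$.
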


\begin{proof}
The proof for the many-one case follows from a general argument, but we give an explicit description applicable to the demi-many-one case.
Reflexivity is trivial.
For transitivity, for subobjects $A,B,C$ of $X,Y,Z$, assume $A\leq_{\sf m}B$ via $\varphi,r_-,r_+$ and $B\leq_{\sf m}C$ via $\psi,s_-,s_+$.
Only an outer reduction is nontrivial.
Let $p$ be a tracker of $\varphi$.
Given a name $a$ of $x\in X$ we know that $p\ast a$ is a name of $\varphi(x)\in Y$.
Thus, given a name $c$ of $\psi(\varphi(x))\in C$, $s^+(p\ast a,c)$ is a name of $\varphi(x)\in B$, and $r^+(a,s^+(p\ast a,c))$ is a name of $x\in A$.
This process gives an outer reduction for $A\leq_{\sf m}C$.
This actually shows that $\leq_{\sf m}'$ is a preorder.
\end{proof}

The poset reflections of $\leq_{\sf m}$ and $\leq_{\sf m}'$ are called the many-one degrees and the demi-many-one degrees, respectively.
It is easy to show the following property similar to many order degree structures.

\begin{prop}
The (demi-)many-one degrees on represented spaces form an upper semilattice.
\end{prop}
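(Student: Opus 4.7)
The plan is to exhibit a join operation by disjoint-sum. Given subobjects $A \mono X$ and $B \mono Y$, form the coproduct $X \oplus Y$ of represented spaces, whose underlying set is $\{0\} \times |X| \cup \{1\} \times |Y|$, with $\langle 0,p\rangle$ naming $\iota_0(x) = (0,x)$ when $p$ is an $X$-name of $x$, and symmetrically on the right. Define the subobject $A \oplus B \mono X \oplus Y$ by the same rule with $A$- and $B$-names in place of $X$- and $Y$-names. I claim that the many-one degree of $A \oplus B$ is the least upper bound of the degrees of $A$ and $B$, and that the same holds for demi-many-one.

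First, to see $A \leq_m A \oplus B$, use the left coproduct injection $\iota_0 \colon X \to X \oplus Y$ as the reducing morphism: take $r_-(p) = \langle 0,p\rangle$ (an $A$-name of $x$ becomes an $(A \oplus B)$-name of $\iota_0(x)$) and $r_+(p,\langle 0,q\rangle) = q$ (any $(A \oplus B)$-name of $\iota_0(x)$ must have tag $0$, and its payload is then an $A$-name of $x$). The symmetric construction gives $B \leq_m A \oplus B$. Conversely, assume $A \leq_m C$ via $(\varphi_0,r^0_-,r^0_+)$ and $B \leq_m C$ via $(\varphi_1,r^1_-,r^1_+)$ for some $C \mono Z$. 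Define $\psi \colon X \oplus Y \to Z$ by case analysis on the tag, $\psi(\iota_k(w)) = \varphi_k(w)$; its tracker reads the first coordinate of a name and dispatches to a tracker of $\varphi_0$ or $\varphi_1$. Set the forward reducer to $\langle k,p\rangle \mapsto r^k_-(p)$ and the backward reducer to $(\langle k,p\rangle, q) \mapsto \langle k, r^k_+(p,q)\rangle$. A direct check verifies conditions (1)--(3) of Definition \ref{def:represented-many-one-reducibility}; in the demi-many-one case, condition (2) and the $r^k_-$ are simply omitted, while the rest of the argument is unchanged.

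The construction is essentially bookkeeping, so the main thing to watch is that case-analysis on the tag is realizable in each of ${\sf K}_1$, ${\sf K}_2$, and ${\sf KV}$. This is immediate: a primitive-recursive branch for ${\sf K}_1$, and continuous (respectively computable) branching on the first symbol for ${\sf K}_2$ and ${\sf KV}$. Accordingly there is no deep obstacle here; the argument is parallel to the classical proof that Wadge and many-one degrees form upper semilattices, the only new content being that the forward and backward reducers must themselves be dispatched on the tag.
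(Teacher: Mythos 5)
Your proof is correct and follows essentially the same route as the paper: both form the coproduct $X+Y$ with $A+B$ as a subobject, reduce $A$ and $B$ into it via the injections, and reduce $A+B$ to any common upper bound $C$ by dispatching the reductions (morphism, inner and outer reducers) on the tag, with the demi-many-one case obtained by simply dropping the inner reducers. No gaps worth noting.
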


\begin{proof}
Again, the proof for the many-one case follows from a general argument, but we give an explicit description applicable to the demi-many-one case.
We construct a join of subobjects $A\mono X$ and $B\mono Y$.
The coproduct $X+Y$ of represented spaces $X$ and $Y$ are defined as follows:
\begin{itemize}
\item The underlying set is $|X+Y|=\{(0,x):x\in X\}\cup\{(1,y):y\in Y\}$.
\item $\pair{i,p}$ is a name of $(j,z)$ iff $i=j$ and if $i=0$ then $p$ is a name of $z\in X$ else $p$ is a name of $z\in Y$.
\end{itemize}

One can think of $A+B$ as a subobject of $X+Y$.
Clearly, $A,B\leq_{\sf m}A+B$.
Assume that $C$ is a subobject of $Z$ such that $A\leq_{\sf m}C$ via $\varphi,r_-,r_+$ and $B\leq_{\sf m}C$ via $\psi,s_-,s_+$.
Let $[\varphi,\psi]\colon X+Y\to Z$ be a morphism such that, for any $(i,x)\in X+Y$, if $i=0$ then $[\varphi,\psi](i,x)=\varphi(x)$ else $[\varphi,\psi](i,x)=\psi(x)$.
Given $\pair{i,u}\vdash_{A+B}(i,x)$, if $i=0$ then $r_-(u)\vdash_C\varphi(x)=[\varphi,\psi](i,x)$ else $s_-(u)\vdash_C\psi(x)=[\varphi,\psi](i,x)$.
Given $\pair{i,u}\vdash_{X+Y}(i,x)$ and $c\vdash_C[\varphi,\psi](i,x)$, if $i=0$ then $[\varphi,\psi](i,x)=\varphi(x)$, so $r_+(u,c)\vdash_Ax$; thus $\pair{0,r_+(u,c)}\vdash_{A+B}(i,x)$.
Similarly, if $i\not=0$ then $\pair{1,r_+(u,c)}\vdash_{A+B}(i,x)$.
This shows $A+B\leq_{\sf m}C$.
By a similar argument, one can show that $\leq_{\sf m}'$ also yields an upper semilattice.
\end{proof}

The many-one degree structure has the following good property that other degree structures do not have very often.

\begin{theorem}
The (demi-)many-one degrees on represented spaces form a distributive lattice.
\end{theorem}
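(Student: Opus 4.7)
The plan is to construct binary meets for the many-one degrees --- thus upgrading the upper semilattice shown above to a full lattice --- and then to verify the distributive law. Both tasks proceed in parallel for the strict and demi versions.

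For meets, given subobjects $A\mono X$ and $B\mono Y$, the first move is to reduce to a common ambient space by forming $X\times Y$ with projections $\pi_X,\pi_Y$ and passing to the pullbacks $\pi_X^\ast A$ and $\pi_Y^\ast B$, which are many-one equivalent to $A$ and $B$ whenever $X,Y$ are inhabited. The tempting candidate, the witnessed intersection $\pi_X^\ast A \nplus \pi_Y^\ast B$ inside the Heyting algebra ${\rm Sub}(X\times Y)$ from Proposition~\ref{prop:suboblect-poset-lattice}, has underlying set $|A|\times|B|$ but fails the many-one inequality $\leq_m A$: the preimage $\pi_X^{-1}[|A|]=|A|\times|Y|$ strictly contains $|A|\times|B|$, violating condition~(1) of Definition~\ref{def:represented-many-one-reducibility}. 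A correct meet must therefore be a subobject whose underlying set is simultaneously the preimage of $|A|$ and of $|B|$ under two morphisms from a common enlarged space. I expect the right construction to live over a coproduct-indexed space such as $(X\times Y)+(X\times Y)$ (equivalently $\{0,1\}\times X\times Y$), where on each branch the subobject carries the relevant witness and the two morphisms into $X$ and $Y$ are defined by case analysis on the flag; the delicate design problem is to choose the underlying sets on the two branches so that both preimages coincide with the chosen $|M|$, and so that $M$ is maximal among common lower bounds.

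For distributivity $A\sqcap(B\sqcup C)\equiv_m (A\sqcap B)\sqcup(A\sqcap C)$, once meet and join are in place I would build the explicit reductions in both directions. The easy direction, $(A\sqcap B)\sqcup(A\sqcap C)\leq_m A\sqcap(B\sqcup C)$, is immediate from the lattice inequalities: $A\sqcap B\leq_m A$ and $A\sqcap B\leq_m B\leq_m B\sqcup C$ combine to $A\sqcap B\leq_m A\sqcap(B\sqcup C)$, symmetrically for $A\sqcap C$, and then the join universal property closes it up. The nontrivial direction requires case analysis on the coproduct index carried inside a name of $B\sqcup C$: given a joint witness for $A$ and for $B\sqcup C$, one branches on whether the latter certifies $B$ or $C$ and invokes the corresponding meet reduction, producing a name on the appropriate summand of $(A\sqcap B)\sqcup(A\sqcap C)$. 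This argument rests on the fact that coproducts in the category of represented spaces are stable under pullback, i.e. that the category is lextensive --- the categorical heart of the distributive law.

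The main obstacle is, without question, identifying the correct meet construction: the naive product/intersection fails, and any valid construction must yield an underlying set that is simultaneously a preimage of $|A|$ and of $|B|$ via two distinct morphisms from the same domain, while also being maximal among common lower bounds. Once the meet is pinned down and its universal property verified, distributivity should reduce to a routine reduction-chase that leverages the lextensive structure already underlying the join.
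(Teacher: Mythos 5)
You have correctly located the crux --- the naive witnessed intersection in ${\rm Sub}(X\times Y)$ is not a lower bound for $\leq_m$ because its underlying set $|A|\times|B|$ is not the full preimage of $|A|$ under the projection --- but the proposal stops exactly where the real work begins: the meet is never constructed. Your guess that it should live over $(X\times Y)+(X\times Y)$ with a flag recording which witness is carried does not by itself repair condition (1) of Definition \ref{def:represented-many-one-reducibility}: tagging branches changes nothing about underlying sets, and on either branch the preimage of $|A|$ under the evident morphism to $X$ is still $|A|\times|Y|$. The missing idea is to shrink the ambient space itself to the \emph{agreeing} instances. In the paper the meet $A\Cap B$ lives on a quotient $A\odot B$ of the space of code triples $\pair{p,q,n}$ such that $p\ast n$ names a point of $X$, $q\ast n$ names a point of $Y$, and these two points satisfy $\delta_X(p\ast n)\in|A|\iff\delta_Y(q\ast n)\in|B|$; on this restricted ambient space the preimages of $|A|$ and of $|B|$ under the two evaluation morphisms genuinely coincide, and a common lower bound $C\leq_m' A,B$ via $\varphi,\psi$ with trackers $p,q$ reduces to the meet by $k\vdash_Z x\mapsto[p,q,k]$. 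Nothing in your sketch produces this restriction, so the universal property you defer to (``maximal among common lower bounds'') has no candidate object to hold of.

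A second gap concerns the strict ($\leq_m$ rather than demi) version, which needs the inner reduction, condition (2) of Definition \ref{def:represented-many-one-reducibility}, for both $A\Cap B\leq_m A$ and $A\Cap B\leq_m B$: a name of a point of the meet may carry only a $B$-witness, yet must be convertible into an $A$-witness of the corresponding point of $X$. This is impossible on the bare space of agreeing code triples; the paper handles it by storing, inside each point of the ambient space, realizers $\alpha=(a,b,c,d)$ that interreduce $A$-witnesses and $B$-witnesses for that particular instance (the space $D(A,B)$). Your plan does not address this at all. By contrast, your treatment of distributivity --- the trivial inclusion from the lattice axioms plus a case split on the coproduct flag for the nontrivial one --- does match the paper's argument in outline, but it cannot be carried out until the meet exists.
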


\begin{proof}
First consider $\leq_{\sf m}'$.
Given subobjects $A\mono X$ and $B\mono Y$, we define the space $A\odot B$ of common information of $A$ and $B$ as follows:
First consider the subspace $C(A,B)$ of ${\tt Code}$ such that $\pair{p,q,n}\in |C(A,B)|$ iff 
\[(\exists x,y)\;[({p\ast n\downarrow}\vdash_Xx)\ \&\ ({q\ast n\downarrow}\vdash_Yy)\ \&\ (x\in |A|\iff y\in |B|)].\]

Then the equivalence relation $\sim$ on $C(A,B)$ is defined as follows:
\[\pair{p,q,n}\sim\pair{p,q,m}\iff \delta_X(p\ast n)=\delta_X(p\ast m)\ \&\ \delta_Y(q\ast n)=\delta_Y(q\ast m).\]

Then let $A\odot B$ be the quotient space $C(A,B)/{\sim}$; that is, an $(A\odot B)$-name of a $\sim$-equivalence class $[p,q,n]$ is of the form $\pair{p,q,m}$ for some $\pair{p,q,m}\in[p,q,n]$.
Then we define a subobject $A\Cap B\mono A\odot B$ as follows:
\begin{itemize}
\item $[p,q,n] \in |A\Cap B|$ iff $[p,q,n]\in |A\odot B|$ and $\delta_X(p\ast n)\in |A|$.
\item $\langle i,u,v\rangle$ is a name of $[p,q,n]\in A\Cap B$ iff $u$ is a name of $[p,q,n]\in A\odot B$ and if $i=0$ then $v$ is a name of $\delta_X(p\ast n)\in A$ else $v$ is a name of $\delta_Y(q\ast n)\in B$.
\end{itemize}

To see that $A\Cap B\leq_{\sf m}' A$, consider $\varphi\colon[p,q,n]\mapsto\delta_X(p\ast n)$, which is well-defined.
If $[p,q,n]\in|A\odot B|$ then ${\varphi([p,q,n])\downarrow}\in X$ since $\pair{p,q,n}\in C(A,B)$.
Thus, $[p,q,n]\in |A\Cap B|$ iff $\varphi([p,q,n])\in |A|$.
Moreover, if $\pair{p,q,m}$ is a name of $[p,q,m]\in A\odot B$ and $v$ is a name of $\varphi([p,q,n])\in A$ then $\langle 0,p,q,m,v\rangle$ is a name of $[p,q,n]\in A\Cap B$.

Similarly, to see that $A\Cap B\leq_{\sf m}' B$, consider $\varphi\colon[p,q,n]\mapsto\delta_Y(q\ast n)$, which is well-defined.
If $[p,q,n]\in|A\odot B|$ then ${\varphi([p,q,n])\downarrow}\in Y$.
By definition, $[p,q,n]\in |A\Cap B|$ iff $\delta_X(p\ast n)\in |A|$, iff $\varphi([p,q,n])=\delta_X(q\ast n)\in |B|$ since $\langle p,q,n\rangle\in C(A,B)$.
Moreover, if $\pair{p,q,m}$ is a name of $[p,q,m]\in A\odot B$ and $v$ is a name of $\varphi([p,q,n])\in B$ then $\langle 1,p,q,m,v\rangle$ is a name of $[p,q,n]\in A\Cap B$.

For a subobject $C\mono Z$, if $C\leq_{\sf m}'A,B$ then there are $\varphi,\psi$ such that, for any $x\in|Z|$, $x\in |C|$ iff $\varphi(x)\in|A|$ and $\psi(x)\in |B|$.
We also have $r,s$ such that if $u$ is a name of $\varphi(x)\in A$ then $r\ast u$ is a name of $x\in C$, and if $v$ is a name of $\psi(x)\in B$ then $s\ast v$ is a name of $x\in C$.

To show $C\leq_{\sf m}'A\Cap B$, let $p$ and $q$ be trackers of $\varphi$ and $\psi$, respectively.
Then if $k$ is a name of some $x\in Z$, then $k\vdash_Z x\in |C|$ iff ${p\ast k\downarrow}\vdash_X\varphi(x)\in |A|$ iff ${q\ast k\downarrow}\vdash_Y\psi(x)\in |B|$.
Hence, $[p,q,k]\in|A\odot B|$, and moreover, $k\vdash_Z x\in |C|$ iff $[p,q,k]\in |A\Cap B|$.
Moreover, if $k$ and $\ell$ are names of $x\in Z$, then $\delta_X(p\ast k)=\delta_X(p\ast \ell)=\varphi(x)$ and $\delta_Y(q\ast k)=\delta_Y(q\ast \ell)=\psi(x)$; hence $[p,q,k]=[p,q,\ell]$.
Therefore, $k\mapsto \langle p,q,k\rangle$ tracks some morphism $\theta\colon Z\to A\odot B$ such that $x\in|C|$ iff $[p,q,k]\in |A\Cap B|$.
Given a name $\langle i,u,v\rangle$ of $[p,q,k]\in A\Cap B$, if $i=0$ then $v$ is a name of $\delta_X(p\ast k)=\varphi(x)\in A$, so $r\ast v$ is a name of $x\in C$.
If $i\not=0$ then $v$ is a name of $\delta_Y(q\ast k)=\psi(x)\in B$, so $s\ast v$ is a name of $x\in C$.
This completes the proof.

The proof for $\leq_{\sf m}$ is almost the same, but the construction is a little more complicated.
Consider the subspace $D(A,B)$ of ${\tt Code}$ such that $\pair{p,q,a,b,c,d,n}\in |D(A,B)|$ iff $\pair{p,q,n}\in|C(A,B)|$ and the following hold:
\begin{itemize}
\item If $v$ is a name of $\delta_X(p\ast n)\in A$ then $a\ast(c\ast v)$ is a name of $\delta_X(p\ast n)\in A$ and $b\ast(c\ast v)$ is a name of $\delta_Y(q\ast n)\in B$.
\item If $v$ is a name of $\delta_Y(q\ast n)\in B$ then $a\ast(d\ast v)$ is a name of $\delta_X(p\ast n)\in A$ and $b\ast(d\ast v)$ is a name of $\delta_Y(q\ast n)\in B$. 
\end{itemize}

The construction after this is exactly the same as in the previous one, except for the additional information $\alpha=(a,b,c,d)$.

To see that $A\Cap B\leq_{\sf m} A$, consider $\varphi\colon[p,q,\alpha,n]\mapsto\delta_X(p\ast n)$, which is well-defined.
If $[p,q,\alpha,n]\in|A\odot B|$ then ${\varphi([p,q,\alpha,n])\downarrow}\in X$ since $\pair{p,q,n}\in C(A,B)$.
Thus, $[p,q,\alpha,n]\in |A\Cap B|$ iff $\varphi([p,q,\alpha,n])\in |A|$.
Moreover, if $\pair{p,q,\alpha,m}$ is a name of $[p,q,\alpha,m]\in A\odot B$ and $v$ is a name of $\varphi([p,q,\alpha,n])\in A$ then $\langle 0,p,q,\alpha,m,v\rangle$ is a name of $[p,q,\alpha,n]\in A\Cap B$.
Conversely, assume that $\langle 0,p,q,\alpha,m,v\rangle$ is a name of $[p,q,\alpha,n]\in A\Cap B$.
If $i=0$ then $v$ is a name of $\delta_X(p\ast n)\in A$.
If $i=1$ then $v$ is a name of $\delta_Y(q\ast n)\in B$, so $a\ast(d\ast v)$ is a name of $\delta_X(p\ast n)\in A$.
By the same argument, one can also show $A\Cap B\leq_{\sf m} B$.

For a subobject $C\mono Z$, if $C\leq_{\sf m}A,B$ then there are $\varphi,\psi$ such that, for any $x\in|Z|$, $x\in |C|$ iff $\varphi(x)\in|A|$ and $\psi(x)\in |B|$.
Let $p$ and $q$ be trackers of $\varphi$ and $\psi$, respectively.
Then if $k$ is a name of some $x\in Z$, then $k\vdash_Z x\in |C|$ iff ${p\ast k\downarrow}\vdash_X\varphi(x)\in |A|$ iff ${q\ast k\downarrow}\vdash_Y\psi(x)\in |B|$.
We also have $a,b,c,d$ such that $v\vdash_A\varphi(x)$ implies $c\ast v\vdash_Cx$; $v\vdash_B\psi(x)$ implies $d\ast v\vdash_Cx$; and $v\vdash_Cx$ implies $a\ast v\vdash_A\varphi(x)$ and $b\ast v\vdash_B\psi(x)$.
In particular, $v\vdash_A\varphi(x)$ implies $a\ast (c\ast v)\vdash_A\varphi(x)$ and $b\ast (c\ast v)\vdash_B\psi(x)$, and $v\vdash_B\psi(x)$ implies $a\ast (d\ast v)\vdash_A\varphi(x)$ and $b\ast (d\ast v)\vdash_B\psi(x)$.
As $\varphi(x)=\delta_X(p\ast k)$ and $\psi(x)=\delta_Y(q\ast k)$, this implies $\langle p,q,\alpha,n\rangle\in D(A,B)$.
The rest of the proof of $C\leq_{\sf m}A\Cap B$ is the same as before.

To show distributivity, first consider $\leq_{\sf m}'$.
For subobjects $A,B,C$ of $X,Y,Z$, it suffices to show $(A+B)\Cap (A+C)\leq_{\sf m}' A+(B\Cap C)$.
We construct a reduction $\varphi$ as follows:
Given $[p,q,n]\in|(A+B)\odot(A+C)|$, assume that $p\ast n$ and $q\ast n$ are of the forms $\langle i,u\rangle$ and $\langle j,v\rangle$, respectively.
If $i=0$ then define $\varphi([p,q,n])=\pair{0,\delta_X(u)}$ else if $j=0$ then $\varphi([p,q,n])=\pair{0,\delta_X(v)}$.
For instance, if $i=1$ and $j=0$ then $\delta_Y(u)\in |B|$ iff $\delta_X(v)\in |A|$; hence $[p,q,n]\in |(A+B)\Cap (A+C)|$ iff $\delta_X(v)\in |A|$ iff $\varphi([p,q,n])=\pair{0,\delta_X(v)}\in |A+(B\Cap C)|$.
If $i\not=0$ and $j\not=0$ then let $p_1$ and $q_1$ be such that $p_1\ast n=u$ and $q_1\ast n=v$ and then define $\varphi([p,q,n])=\langle 1,[p_1,q_1,n]\rangle$.
As $\delta_Y(u)\in |B|$ iff $\delta_Z(v)\in |C|$ iff $[p_1,q_1,n]\in |B\Cap C|$ iff $\varphi([p,q,n])\in |A+(B\Cap C)|$.
For an outer reduction, given a name $\pair{p,q,m}$ of $[p,q,n]\in(A+B)\odot(A+C)$ and a name $\langle k,w\rangle$ of $\varphi[p,q,n]\in A+(B\Cap C)$, if $i=0$ or $j=0$ then $k=0$ and $w$ is an $A$-name of $\delta_X(u)$ or $\delta_X(v)$, so one can compute a name of $[p,q,n]\in(A+B)\Cap(A+C)$.
If $i\not=0$ and $j\not=0$ then $w$ is of the form $\pair{\ell,p_1,q_1,m',z}$, where if $\ell=0$ then $z\vdash_B\delta_Y(u)$ else $z\vdash_C\delta_Z(v)$.
Thus, if $\ell=0$ then $\langle 1,z\rangle\vdash_{A+B}(1,\delta_Y(u))$ else $\langle 1,z\rangle\vdash_{A+C}(1,\delta_Z(v))$, so one can compute a name of $[p,q,n]\in(A+B)\Cap(A+C)$.

For $\leq_{\sf m}$, if $i\not=0$ and $j\not=0$ then transform $\alpha=(a,b,c,d)$ into $\alpha'=(a',b',c',d')$, where $a'=\lambda x.\pi_1(a\ast x)$, $b'=\lambda x.\pi_1(b\ast x)$, $c'=\lambda x.c(1,x)$ and $d'=\lambda x.d(1,x)$.
Then define $\varphi([p,q,\alpha,n])=\langle 1,[p_1,q_1,\alpha',n]\rangle$.
The rest of the discussion is the same as above.
\end{proof}

However, if one focuses only on subobjects of a single represented space, it is generally not a lattice.

\begin{prop}[${\sf K}_1$]\label{prop:many-one-non-meet}
Both $({\rm Sub}(\om),\leq_{\sf m})$ and $({\rm Sub}(\om),\leq_{\sf m}')$ do not have binary meet.
\end{prop}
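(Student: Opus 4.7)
The strategy is to reduce the question to a classical recursion-theoretic fact by restricting attention to regular subobjects of $\om$. First, I claim that in $\sf K_1$ any $\leq_m'$-lower bound (a fortiori any $\leq_m$-lower bound) of a regular subobject $A \mono \om$ is itself regular. Indeed, regularity of $A$ provides a partial computable function $\iota_A$ sending each $n \in |A|$ to an $A$-name of $n$ (the tracker of $i^{-1}$). If $C \leq_m' A$ via $\varphi, r_+$, then for $n \in |C|$ the value $r_+(n, \iota_A(\varphi(n)))$ is a $C$-name of $n$, computed uniformly from $n$, so $C$ is regular.

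It follows that for regular $A, B \mono \om$, any candidate meet in $({\rm Sub}(\om), \leq_m)$ or $({\rm Sub}(\om), \leq_m')$ must itself be regular, so the question reduces to whether the sub-poset of regular subobjects --- which is isomorphic to the poset of classical many-one degrees of subsets of $\om$ --- has binary meets. I then invoke the classical fact that the many-one degrees of subsets of $\om$ do not form a lower semilattice (for c.e.\ sets this is Lachlan's theorem, and the phenomenon persists among arbitrary subsets). Fix $A, B \subseteq \om$ without $m$-meet; viewed as regular subobjects they furnish the required witnesses, and the same pair handles both $\leq_m$ and $\leq_m'$ since these reducibilities coincide on regular subobjects by the Observation following the definition of $\leq_m'$.

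\textbf{Main obstacle.} The reduction step is a short calculation and the non-lattice result is standard recursion theory. If a self-contained proof of the classical fact is preferred, one constructs $A, B \subseteq \om$ by a Kleene--Post style diagonalization: stage by stage, extend $A$ and $B$ to keep them many-one incomparable, while for every pair of computable functions $(\varphi_e, \psi_e)$ witnessing a potential common lower bound $C_e = \varphi_e^{-1}[A] = \psi_e^{-1}[B]$, arrange a strictly higher common lower bound $D_e \leq_m A, B$ with $D_e \not\leq_m C_e$. The bookkeeping for this priority argument is orthogonal to the represented-space machinery of the paper and is the sole technical difficulty if one does not appeal to the classical literature.
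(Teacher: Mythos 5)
Your proof is correct and follows essentially the paper's route: you show (this is exactly the paper's Lemma \ref{lem:n-regular-downclosed}) that any $\leq_m$- or $\leq_m'$-lower bound of a regular subobject of $\om$ is itself regular, and that on regular subobjects both reducibilities coincide with classical many-one reducibility, so a meet in ${\rm Sub}(\om)$ would produce an infimum for the corresponding pair of classical $m$-degrees. The only divergence is the classical ingredient: the paper obtains a pair of sets with no $m$-infimum from the exact pair theorem for strictly increasing sequences of $m$-degrees (Odifreddi, Theorem VI.3.4), which is the precise citation for arbitrary subsets of $\om$, whereas your reference to Lachlan's theorem concerns the c.e.\ $m$-degrees and your Kleene--Post sketch would in effect re-prove the exact-pair fact.
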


Let us prepare a lemma to prove this.
A general argument shows that regular subobjects are $\leq_{\sf m}$-downward closed.

\begin{lemma}\label{lem:n-regular-downclosed1}
If a mono $i\colon A\mono X$ is many-one reducible to a regular mono $j\colon B\mono Y$, then $i\colon A\mono X$ is also a regular mono.
\end{lemma}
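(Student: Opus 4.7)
The plan is to unpack the definitions and chain the realizers directly. Recall that a subobject is regular precisely when one can realizably compute an $A$-name of $x\in|A|$ from an $X$-name of $x$. So, starting from an arbitrary $X$-name $p$ of some $x\in|A|$, the task is to produce, realizably in $p$, an $A$-name of $x$.

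First I would unpack the hypotheses. Since $B\mono Y$ is regular, there exists a partial realizable function $\beta$ such that whenever $q$ is a $Y$-name of $y\in|B|$, the value $\beta(q)$ is a $B$-name of $y$. The hypothesis $A\leq_m B$ supplies a morphism $\varphi\colon X\to Y$ with tracker $u$, together with partial realizable functions $r_-,r_+$ satisfying the three conditions of Definition \ref{def:represented-many-one-reducibility}. In particular, for any $x\in|X|$, $x\in|A|$ iff $\varphi(x)\in|B|$, and if $p$ is an $X$-name of $x\in|A|$ and $q$ is a $B$-name of $\varphi(x)$, then $r_+(p,q)$ is an $A$-name of $x$.

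Next I would chain these together. Suppose $p$ is an $X$-name of an element $x\in|A|$. Since $u$ tracks $\varphi$, $u(p)$ is a $Y$-name of $\varphi(x)$. By condition (1), $\varphi(x)\in|B|$, so by regularity of $B$, $\beta(u(p))$ is a $B$-name of $\varphi(x)$. Then by condition (3), $r_+(p,\beta(u(p)))$ is an $A$-name of $x$. Hence the map $p\mapsto r_+(p,\beta(u(p)))$ is a partial realizable function that converts any $X$-name of $x\in|A|$ into an $A$-name of $x$, i.e., tracks the partial inverse $i^{-1}\pcolon X\to A$ of the inclusion $i\colon A\mono X$. This is exactly the definition of $A\mono X$ being regular.

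There is no real obstacle here: the proof is a one-line diagram-chase through the definitions once the regularity of $B$ is expressed as ``a $Y$-name of an element of $|B|$ already realizably contains a $B$-name''. The only point worth noting is that condition (2) of Definition \ref{def:represented-many-one-reducibility} plays no role, which is as expected, since the conclusion concerns only the recovery of an $A$-name from an $X$-name.
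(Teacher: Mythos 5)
Your proof is correct, but it takes a different route from the paper's proof of this particular lemma. The paper disposes of Lemma \ref{lem:n-regular-downclosed1} in one line by a purely categorical argument: many-one reducibility says precisely that $A\mono X$ is a pullback of $B\mono Y$ along $\varphi$ (Definition \ref{def:pullback-many-one-reducibility}), and regular monos are stable under pullback, so $A\mono X$ is regular. You instead unfold everything into realizers: track $\varphi$ by $u$, use regularity of $B$ to turn the $Y$-name $u(p)$ of $\varphi(x)$ into a $B$-name $\beta(u(p))$, and feed this into $r_+$ to recover an $A$-name of $x$; that chain is sound and correctly exhibits a tracker for the partial inverse of the inclusion $A\mono X$. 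What your explicit computation buys is exactly what the paper points out immediately afterwards: since condition (2) of Definition \ref{def:represented-many-one-reducibility} is never used, your argument in fact proves the stronger Lemma \ref{lem:n-regular-downclosed}, namely that regularity is inherited even under demi-many-one reducibility $\leq_m'$ — and indeed your proof coincides, essentially word for word, with the paper's proof of that stronger lemma. What the categorical route buys, conversely, is brevity and generality: it works in any category with pullbacks where regular monos are pullback-stable, without mentioning names or realizers at all.
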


\begin{proof}
Regular monos are stable under pullback.
\end{proof}

This can also easily be shown using explicit descriptions.
In fact, explicit descriptions lead us to more than that.

\begin{lemma}\label{lem:n-regular-downclosed}
If a mono $i\colon A\mono X$ is demi-many-one reducible to a regular mono $j\colon B\mono Y$, then $i\colon A\mono X$ is also a regular mono.
\end{lemma}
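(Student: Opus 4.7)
The plan is to show that an $X$-name of any $x\in|A|$ can be realizably transformed into an $A$-name, which is exactly what regularity of $A\mono X$ demands.

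First I would unpack what demi-many-one reducibility gives us: there is a morphism $\varphi\colon X\to Y$, tracked by some realizable $u$, together with a realizable $r_+$ satisfying condition (3), i.e.\ whenever $p$ is an $X$-name of some $x\in|A|$ and $q$ is a $B$-name of $\varphi(x)\in|B|$, then $r_+(p,q)$ is an $A$-name of $x$. In addition, condition (1) guarantees $x\in|A|\iff\varphi(x)\in|B|$. Second, I would use regularity of $j\colon B\mono Y$: by Definition \ref{def:subobject-regular}, there is a partial realizable $s$ such that whenever $r$ is a $Y$-name of some $y\in|B|$, then $s(r)$ is a $B$-name of $y$.

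The key composition then goes as follows. Given an $X$-name $p$ of an arbitrary $x\in|A|$, apply $u$ to get $u(p)$, a $Y$-name of $\varphi(x)$. By condition (1), $\varphi(x)\in|B|$, so $s(u(p))$ is defined and is a $B$-name of $\varphi(x)$. Condition (3) now yields that $r_+(p,s(u(p)))$ is an $A$-name of $x$. Hence $p\mapsto r_+(p,s(u(p)))$ is a partial realizable function tracking an inverse $i^{-1}\pcolon X\to A$ of the inclusion on $|A|$, which is exactly the definition of $A\mono X$ being a regular subobject.

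I do not anticipate a genuine obstacle: the argument is a straightforward chase through the definitions, and the only subtlety is noting that we do not need condition (2) of many-one reducibility (which is exactly the distinction between $\leq_m$ and $\leq_m'$). This strengthens Lemma \ref{lem:n-regular-downclosed1}, since regular monos need not in general be stable under the weaker demi-pullback relation, but the explicit construction above shows stability holds anyway because regularity of $B$ furnishes the missing $B$-name from the $Y$-name produced by the tracker of $\varphi$.
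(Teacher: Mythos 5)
Your proof is correct and follows essentially the same route as the paper: track $\varphi$ to turn the $X$-name of $x$ into a $Y$-name of $\varphi(x)$, use regularity of $B$ to upgrade it to a $B$-name, and feed the pair into $r_+$ to recover an $A$-name of $x$. Nothing further is needed.
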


\begin{proof}
Assume $A\leq_{\sf m}' B$ via $\varphi,r_+$.
Given an $X$-name $p$ of $x\in|A|$, using a tracker of $\varphi$, one can find an $X$-name of $\varphi(x)\in |B|$.
As $B$ is regular, we get a $B$-name $q$ of $\varphi(x)$.
Then $r_+(p,q)$ is an $A$-name of $x$.
Thus, $A$ is regular.
\end{proof}

\begin{proof}[Proof of Proposition \ref{prop:many-one-non-meet}]
For the non-existence of meet, it is known that every increasing sequence of many-one degrees has an exact pair \cite[Theorem VI.3.4]{OdiBook}; that is, for any sequence $A_0<_mA_1<_m\dots$ of subsets of $\om$ there exists $B,C\subseteq\om$ such that $A\leq_m B,C$ if and only if $A\leq_m A_i$ for some $i\in\om$, where $\leq_m$ is many-one reducibility in the classical sense (Definition \ref{def:usual-many-one}).
Now note that if $R,S$ are regular subobjects of $\om$ then $R\leq_{\sf m}S$ iff $R\leq_{\sf m}'S$ iff $|R|\leq_m|S|$.
For each $i\in\om$, let us think of $A_i,B,C$ as regular subobjects of $\om$.
We claim that $B$ and $C$ do not have meet.
If $A\leq_{\sf m}' B,C$ then, by Lemma \ref{lem:n-regular-downclosed}, $A$ is also regular.
Thus, the above property shows $A\leq_{\sf m}A_i$ for some $i\in\om$.
Then we get $A<_{\sf m}A_{i+1}<_{\sf m}B,C$, which means that $A$ is not a meet of $B$ and $C$.
\end{proof}

\section{Hierarchy}

\subsection{Sierpi\'nski dominance}

Our objective is to analyze the arithmetical/Borel hierarchy in the category of represented spaces.
For this, we first need to define the notion of $\Sigma^0_1$ and $\Pi^0_1$ subobjects.
Let us first explain these notions for subobjects of $\om^\om$ in ${\sf K}_2$ or ${\sf KV}$, followed by the general definitions.

\begin{definition}[${\sf K}_2$ or ${\sf KV}$]\label{def:K2KV-open-closed}
A subobject $A\mono\om^\om$ is {\em open} or $\Sigma^0_1$ if there exists a morphism $\varphi\colon\om^\om\times\om\to\om$ such that $A$ is equivalent to the following subspace $E_\varphi$ of $\om^\om$: 
\[E_\varphi=\{x\in\om^\om:(\exists n\in\om)\;\varphi(x,n)\not=0\}.\]

Similarly, a subobject $A\mono\om^\om$ is {\em closed} or $\Pi^0_1$ if there exists a morphism $\varphi\colon\om^\om\times\om\to\om$ such that $A$ is equivalent to the subspace of $\om^\om$ whose underlying set is $\om^\om\setminus E_\varphi$. 
\end{definition}

Via currying, a $\Sigma^0_1$ subobject is a subspace of the form $\{x\in\om^\om:\varphi(x)\not=0^\infty\}$, and a $\Pi^0_1$ subobject is a subspace of the form $\{x\in\om^\om:\varphi(x)=0^\infty\}$.
We would like to consider a similar notion for ${\sf K}_1$, but we need to give a formal definition of $\om^\om$ in ${\sf K}_1$.

\begin{definition}
By an abuse of notation, we use $\om^\om$ to denote the exponential object ${\sf Nat}^{\sf Nat}$.
To be more explicit:
\begin{itemize}
\item In ${\sf K}_2$ and ${\sf KV}$, the underlying set $|{\sf Nat}^{\sf Nat}|$ is the set of all functions on $\N$.
A name of $f\in{\sf Nat}^{\sf Nat}$ is $f$ itself (as in Examples \ref{exa:code-itself} and \ref{exa:subobject-fin-def}).
\item In ${\sf K}_1$, the underlying set $|{\sf Nat}^{\sf Nat}|$ is the set of all total computable functions on $\N$.
A name of $f\in{\sf Nat}^{\sf Nat}$ is any program $p\in{\tt Code}$ computing $f$.
\end{itemize}
\end{definition}

By adopting this definition, Definition \ref{def:K2KV-open-closed} makes sense for ${\sf K}_1$ as well.
Since most of the concrete examples in this article are subobjects of $\om^\om$, it is sufficient to understand the above as definitions of open and closed subobjects.
However, for the sake of uniform discussion, we also give general definitions.

\begin{definition}\label{def:open-closed-general}
A subobject $A\mono X$ is {\em open} or $\Sigma^0_1$ if there exists a partial realizable function $F$ such that $A$ is equivalent to the following subspace $E_F$ of $X$:
\begin{align*}
x\in |E_F|&\iff \mbox{if $p$ is an $X$-name of $x$ then $F(p)$ is an $\om^\om$-name of $0^\infty$},\\
x\not\in |E_F|&\iff \mbox{if $p$ is an $X$-name of $x$ then $F(p)$ is an $\om^\om$-name of some $\alpha\not=0^\infty$}.
\end{align*}

Similarly, a subobject $A\mono\om^\om$ is {\em closed} or $\Pi^0_1$ if there exists a partial realizable function such that $A$ is equivalent to the subspace of $X$ whose underlying set is $|X|\setminus |E_F|$. 
\end{definition}

Let us explain the general theory behind this definition.
Recall from Example \ref{exa:Sierpinski-space} that $\mathbb{S}$ is the represented Sierpi\'nski space.
Here, (an $\om^\om$-name of) the sequence $0^\infty$ is a name of $\top\in\mathbb{S}$ and any other sequence is a name of $\bot\in\mathbb{S}$.

%

It is well-known that an open set in a topological space can be identified with a continuous map to the (topological) Sierpi\'nski space $\mathbb{S}$, which consists of the open point $\top$ and the closed point $\bot$.
To be precise, an open subset $A$ of a topological space $X$ is exactly a set of the form $\varphi^{-1}\{\top\}$ for some continuous map $\varphi\colon X\to\mathbb{S}$.
This idea can be generalized as follows:

\begin{definition}
A subobject $A\mono X$ is {\em open} if it is a pullback of $\top\colon\mathbf{1}\mono\mathbb{S}$ along some morphism $\varphi\colon X\to\mathbb{S}$.
Similarly, a subobject $A\mono X$ is {\em closed} if it is a pullback of $\bot\colon\mathbf{1}\mono\mathbb{S}$ along some morphism $\varphi\colon X\to\mathbb{S}$.
\end{definition}

To be more explicit, the pullback $\varphi^\ast\top$ can be written as follows:
\begin{itemize}
\item The underlying set is $|\varphi^\ast\top|=\{x\in X:\varphi(x)=\top\}$.
\item A name of $x\in\varphi^\ast\top$ is a name of $x\in X$.
\end{itemize}

Note that fixing a name of $\top\in\mathbb{S}$ in advance makes it possible to omit the information on a name of $\top$ form the above description of $\varphi^\ast\top$.
By definition of the representation of $\mathbb{S}$, it is easy to verify that this definition is consistent with Definition \ref{def:open-closed-general}.
Also note, by definition, that $A\mono X$ is open iff $A\leq_{\sf m}(\top\colon\mathbf{1}\mono\mathbb{S})$, and $A\mono X$ is closed iff $A\leq_{\sf m}(\bot\colon\mathbf{1}\mono\mathbb{S})$.

\begin{obs}\label{obs:open-closed-is-regular}
An open subobject is regular.
Similarly, a closed subobject is regular.
\end{obs}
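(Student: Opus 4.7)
The plan is to unfold the explicit pullback description given in Definition \ref{def:pullback-in-represented-spaces} and observe that the witness component becomes trivial when the source of the subobject is $\mathbf{1}$.

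First, suppose $A\mono X$ is open, so by definition there is a morphism $\varphi\colon X\to\mathbb{S}$ with $A\equiv\varphi^\ast\top$, where $\top\colon\mathbf{1}\mono\mathbb{S}$. By Definition \ref{def:pullback-in-represented-spaces}, a name of $x\in\varphi^\ast\top$ is a pair $\langle p,q\rangle$ where $p$ is an $X$-name of $x$ and $q$ is a $\mathbf{1}$-name of the unique element $\bullet$ (through which the inclusion $\mathbf{1}\mono\mathbb{S}$ factors). By Example \ref{exa:terminal}, \emph{every} code is a $\mathbf{1}$-name of $\bullet$.

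To exhibit regularity, I need a partial realizable function that takes an $X$-name of a point $x\in|A|$ and produces an $A$-name. Take the map $p\mapsto\langle p,0\rangle$. If $p$ is an $X$-name of $x\in|A|=\{x\in X:\varphi(x)=\top\}$, then $p$ is an $X$-name of $x$ and $0$ is a $\mathbf{1}$-name of $\bullet$, so $\langle p,0\rangle$ is an $(\varphi^\ast\top)$-name of $x$. This tracks a partial inverse $i^{-1}\pcolon X\to A$ to the inclusion, so $A$ is regular. (Conversely, $\langle p,q\rangle\mapsto p$ tracks the inclusion itself, as already noted in the paragraph after the definition of open subobject.)

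The closed case is formally identical, replacing $\top\colon\mathbf{1}\mono\mathbb{S}$ with $\bot\colon\mathbf{1}\mono\mathbb{S}$: a name of $x\in\varphi^\ast\bot$ is likewise a pair $\langle p,q\rangle$ with $p$ an $X$-name of $x$ and $q$ a $\mathbf{1}$-name, and the trivial-pairing $p\mapsto\langle p,0\rangle$ again provides a partial inverse to the inclusion. There is essentially no obstacle here — the entire content of the observation is that $\mathbf{1}$ carries no name-theoretic information, so pullbacks along $\mathbf{1}\mono\mathbb{S}$ add no genuine witness data to an $X$-name, and therefore coincide (up to $\equiv$) with a subspace.
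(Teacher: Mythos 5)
Your proof is correct and follows essentially the same route as the paper: the paper simply notes that an open (resp.\ closed) subobject is a pullback of the regular subobject $\top\colon\mathbf{1}\mono\mathbb{S}$ (resp.\ $\bot$) and that regularity is stable under pullback, pointing to the explicit pullback description; your argument is just the concrete unfolding of that description, producing the partial inverse $p\mapsto\langle p,0\rangle$ since the witness component carries no information. (Only cosmetic caveat: in ${\sf K}_2$/${\sf KV}$ the fixed second component should be a fixed code such as $0^\infty$ rather than the number $0$, which changes nothing.)
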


\begin{proof}
This is because an open subobject is a pullback of a regular subobject $\top\colon\mathbf{1}\mono\mathbb{S}$.
The same applies to a closed subobject.
See also the above explicit description of the pullback.
\end{proof}

\begin{example}~
\begin{enumerate}
\item In ${\sf K}_1$:
An open subobject of $\om$ is exactly a $\Sigma^0_1$ subset of $\om$, and a closed subobject of $\om$ is exactly a $\Pi^0_1$ subset of $\om$.
\item In ${\sf KV}$:
An open subobject of $\om^\om$ is exactly a $\Sigma^0_1$ subset of $\om^\om$, and a closed subobject of $\om^\om$ is exactly a $\Pi^0_1$ subset of $\om^\om$.
\item In ${\sf K}_2$:
An open subobject of $\om^\om$ is exactly an open subset of $\om^\om$, and a closed subobject of $\om^\om$ is exactly a closed subset of $\om^\om$.
\end{enumerate}
\end{example}

One can generalize these equivalences to (computable) Polish spaces and more, under appropriate (admissible) representations.


\subsection{Arithmetical hierarchy}

One of the main ingredients of the theory of hierarchies is the extraction of subsets using formulas.
This idea is common to both arithmetic and Borel hierarchies.

Let $\Gamma$ be a class of subobjects.
A sequence $(A_i)_{i\in I}$ of subobjects of $X$ is {\em uniformly $\Gamma$} if there exists a $\Gamma$-subobject $A\mono I\times X$ such that $A_i$ is the $i$th projection of $A$ for each $i\in I$.

\begin{definition}
Let $A$ be a subobject of a represented space $X$.
\begin{enumerate}
\item A subobject $A$ is ${\Sigma}^0_1$ if it is an open subobject of $X$.
\item A subobject $A$ is ${\Pi}^0_1$ if it is a closed subobject of $X$.
\item A subobject $A$ is ${\Sigma}^0_{n+1}$ if there exists a uniformly $\Pi^0_n$ sequence $(B_n)_{n\in\om}$ of subobjects of $X$ such that $A\equiv\biguplus_{n\in\om}B_n$.
\item A subobject $A$ is ${\Pi}^0_{n+1}$ if there exists a uniformly $\Sigma^0_n$ sequence $(B_n)_{n\in\om}$ of subobjects of $X$ such that $A\equiv\bignplus_{n\in\om}B_n$.
\end{enumerate}
\end{definition}

In ${\sf K}_2$, this definition coincides with the standard definition of the Borel hierarchy.
In ${\sf K}_1$, this is the arithmetical hierarchy with witnesses.

\begin{example}
Recall from Example \ref{exa:subobject-fin-def} that ${\sf Fin}$ is the subobject of $\om^\om$ consisting of sequences which are eventually zero.
Then ${\sf Fin}$ is a ${\Sigma}^0_2$ subobject of $\om^\om$.

This is because $F=\{(n,x)\in\om\times\om^\om:(\forall m\geq n)\;x(m)=0\}$ is a closed subobject of $\om\times\om^\om$; hence ${\sf Fin}=\exists^\om F=\biguplus_{n\in\om}F_n$ is $\Sigma^0_2$.
\end{example}

\begin{obs}\label{obs:basic-positive-Pi-0-2-regular}
Every ${\Pi}^0_2$ subobject of a represented space $X$ is regular.
\end{obs}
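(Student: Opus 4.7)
The plan is to unfold the definition of $\Pi^0_2$ and reduce the problem to the trivial name structure that open subobjects enjoy. If $A\mono X$ is $\Pi^0_2$, then by definition there is a uniformly $\Sigma^0_1$ sequence $(B_n)_{n\in\om}$ of subobjects of $X$ with $A\equiv\bignplus_{n\in\om}B_n$. Since regularity of the inclusion $A\mono X$ is invariant under the equivalence $\equiv$ (composing a partial inverse for one inclusion with the tracker of the $\equiv$-isomorphism yields one for the other), I would replace $A$ by $\bignplus_n B_n$ and argue that this canonical representative is regular.

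The key observation is that by Observation \ref{obs:open-closed-is-regular} together with the explicit pullback description immediately following the definition of open subobject, once a canonical name of $\top\in\mathbb{S}$ is fixed, a name of $x\in B_n$ is nothing more than an $X$-name of $x$. Substituting this into the definition of $\bignplus$, a name of $x\in\bignplus_n B_n$ becomes a code $p$ such that $p\ast z$ is an $X$-name of $x$ whenever $z\vdash_\om n$.

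Converting an arbitrary $X$-name $p_0$ of some $x\in|\bignplus_n B_n|=\bigcap_n|B_n|$ into such a $p$ is then immediate: I take $p=\lambda z.\,p_0$. This works because $x\in B_n$ for every $n\in\om$, so $p_0$ is simultaneously a $B_n$-name of $x$ for every $n$, and the constant combinator $p_0\mapsto\lambda z.\,p_0$ is realizable in each of ${\sf K}_1$, ${\sf K}_2$, and ${\sf KV}$. This gives a realizable partial inverse to the inclusion $\bignplus_n B_n\mono X$ on its underlying set, which is exactly regularity.

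I do not foresee a serious obstacle. The only subtle point is the simplification ``a $B_n$-name is just an $X$-name'' for open $B_n$, but this is recorded in the text immediately after the definition of open subobject and is independent of the ambient space $X$; everything else is a mechanical unfolding of $\bignplus$ together with the existence of a constant combinator in the chosen coding system.
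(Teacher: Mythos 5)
Your argument is correct and is essentially the paper's own proof: unfold $A\equiv\bignplus_n B_n$ with the $B_n$ open (hence regular by Observation \ref{obs:open-closed-is-regular}), note that a name for the intersection is a realizer sending each $n$ to a $B_n$-name, and observe that the constant map $\lambda z.\,p_0$ built from an $X$-name $p_0$ does the job. Your explicit remark that regularity is invariant under $\equiv$ is a point the paper glosses over, but it is a routine addition rather than a different route.
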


\begin{proof}
Every $\Pi^0_2$ subobject $A\mono X$ is of the form $\bignplus_{n\in\om}B_n$ for some open subobject $B\mono\om\times X$.
By Observation \ref{obs:open-closed-is-regular}, $B$ is regular, so a name of $(n,x)\in B$ is just the pair of $n\in\om$ and an $X$-name of $x$.
By definition, a name of $x\in A$ is a name of a function that, given $n\in\om$, returns an name of $(n,x)\in B$.
In particular, if $p$ is a name of $x\in X$ then the constant function $n\mapsto p$ gives a name of $x\in A$.
This means that $A\mono X$ is regular.
\end{proof}

\subsection{Internal logic}

It is also useful to introduce a method of defining a subobject of a represented space using a first order formula.
It is often easier to read the meaning of a construction of a subobject by using a formula than by combining set operations.

\begin{definition}
Given a sequence $(X_i)_{i\leq n}$ of represented spaces, let $x_i$ be a variable symbol of type $X_i$.
Put a subobject $R_i\mono X_i$ as a relation symbol of type $X_i$ in our language.
Then inductively define a subobject $\eval{\bar{x}:\varphi(\bar{x})}$ of $X_1\times\dots\times X_n$ as follows:
\begin{enumerate}
\item $\eval{(x_1,\dots,x_n):R_i(x_i)}=X_1\times\dots\times X_{i-1}\times R_i\times X_{i+1}\times\dots\times X_n$.
\item $\eval{\bar{x}:\varphi(\bar{x})\lor\psi(\bar{x})}=\eval{\bar{x}:\varphi(\bar{x})}\uplus\eval{\bar{x}:\psi(\bar{x})}$.
\item $\eval{\bar{x}:\varphi(\bar{x})\land\psi(\bar{x})}=\eval{\bar{x}:\varphi(\bar{x})}\nplus\eval{\bar{x}:\psi(\bar{x})}$.
\item $\eval{\bar{x}:\varphi(\bar{x})\to\psi(\bar{x})}=\eval{\bar{x}:\varphi(\bar{x})}\arr\eval{\bar{x}:\psi(\bar{x})}$.
\end{enumerate}

Moreover, for a represented space $I$,
\begin{enumerate}
\setcounter{enumi}{4}
\item $\eval{\bar{x}:\exists i\in I.\varphi(i,\bar{x})}=\biguplus_{i\in I}\eval{\bar{x}:\varphi(i,\bar{x})}$.
\item $\eval{\bar{x}:\forall i\in I.\varphi(i,\bar{x})}=\bignplus_{i\in I}\eval{\bar{x}:\varphi(i,\bar{x})}$.
\end{enumerate}
\end{definition}

Thereafter, a relation symbol $R(x)$ in a formula is sometimes abbreviated to $x\in R$.
Also, we sometimes use a notation such as $s(x)=t(y)$ in a formula, which determines the subspace whose underlying set is $\{(x,y):s(x)=t(y)\}$.

Hereafter, we mainly focus on the subobjects of $\om^\om$.
As already noted, in ${\sf K}_2$ and ${\sf KV}$, the object $\om^\om$ literally means the set of all infinite sequences of natural numbers, while in ${\sf K}_1$, the exponential object $\om^\om$ is the space of all total computable functions.
Thus, our theory on many-one reducibility for subobjects of $\om^\om$ in ${\sf K}_1$ correspond to the witnessed version of {\em many-one reducibility for index sets within ${\rm Tot}$} in classical terms, where ${\rm Tot}$ is the set of all indices of total computable functions on $\om$.

\section{The structure of $\Sigma^0_2$ sets}

\subsection{Union of closed sets}

According to Veldman \cite{VeldmanPhD}, in certain intuitionistic systems, interestingly, {\em the union of two $\Pi^0_1$ sets is not necessarily $\Pi^0_1$}.
We first see that this strange phenomenon can be given a clear interpretation even in classical logic by using reducibility for non-regular subobjects.

\begin{example}
Let $I$ be a represented space, and $X,Y\subseteq I$ be its subspaces.
Then the {\em tartan} $\uplus_I(X,Y)$ is defined as the witnessed union $(X\times I)\wunion(I\times Y)$.
In other words, $\uplus_{I}(X,Y)=\eval{(x,y):X(x)\lor Y(y)}$.
\end{example}

\begin{example}\label{exa:tartan-difference-hierarchy}
For the closed subspace $\{0^\infty\}\subseteq\om^\om$, consider the tartan ${\sf L}=\uplus_{\om^\om}(\{0^\infty\},\{0^\infty\})$.
In other words, ${\sf L}=\eval{(x,y):x=0^\infty\lor y=0^\infty}$.
\end{example}

Let $\Sigma_{\pi\cup\pi}$ be the class of subobjects which can be written as a witnessed union of two $\Pi^0_1$ sets.

\begin{obs}
${\sf L}\in\Sigma_{\pi\cup\pi}$.
\end{obs}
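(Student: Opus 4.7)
The plan is to simply unfold the intensional definition of ${\sf L}$ and verify that each of the two conjuncts of the tartan is a $\Pi^0_1$ subobject of $\om^\om\times\om^\om$, so that the witnessed union ${\sf L}$ lies in $\Sigma_{\pi\cup\pi}$ directly by the definition of that class.

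First, I would check that the singleton $\{0^\infty\}$ is a $\Pi^0_1$ subobject of $\om^\om$. In ${\sf K}_2$ or ${\sf KV}$, the map $\varphi\colon\om^\om\times\om\to\om$ given by $\varphi(x,n)=x(n)$ is a morphism, and
\[E_\varphi=\{x\in\om^\om:(\exists n)\;x(n)\neq 0\}=\om^\om\setminus\{0^\infty\},\]
so $\{0^\infty\}$ is equivalent to the complementary subspace of $E_\varphi$ and hence $\Pi^0_1$ by definition.

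Next, I would lift this to the product space: the morphisms $\psi_1((x,y),n)=x(n)$ and $\psi_2((x,y),n)=y(n)$ witness that $\{0^\infty\}\times\om^\om$ and $\om^\om\times\{0^\infty\}$ are closed subobjects of $\om^\om\times\om^\om$. In the language of the intensional definition, these are exactly $\eval{(x,y):x\in\{0^\infty\}}$ and $\eval{(x,y):y\in\{0^\infty\}}$ via clause (1). Applying clause (2) (disjunction), we get
\[{\sf L}=\eval{(x,y):x=0^\infty\lor y=0^\infty}=(\{0^\infty\}\times\om^\om)\wunion(\om^\om\times\{0^\infty\}),\]
which is by construction a witnessed union of two $\Pi^0_1$ subobjects, so ${\sf L}\in\Sigma_{\pi\cup\pi}$.

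Since the observation is essentially a bookkeeping exercise unpacking the tartan construction against the definition of $\Sigma_{\pi\cup\pi}$, I do not expect any genuine obstacle; the only point worth care is to make sure that the explicit morphisms $\psi_1,\psi_2$ really do realize the two factor $\Pi^0_1$ subobjects over the product space, rather than only over $\om^\om$, so that the witnessed union is formed inside ${\rm Sub}(\om^\om\times\om^\om)$ as required.
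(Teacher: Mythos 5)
Your proof is correct and matches what the paper intends: the observation is stated without proof precisely because ${\sf L}=(\{0^\infty\}\times\om^\om)\wunion(\om^\om\times\{0^\infty\})$ is by construction a witnessed union of two closed subobjects, and your verification that $\{0^\infty\}$ is $\Pi^0_1$ and that the two factors are closed subobjects of the product space just spells out this bookkeeping. Your care about realizing the factor subobjects over $\om^\om\times\om^\om$ rather than over $\om^\om$ is the right (and only) point that needs checking.
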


A typical way to show the non-regularity of a subobject in ${\sf K}_2$ or ${\sf KV}$ can be to use the following notion.

\begin{definition}\label{def:non-regular-point}
For a subobject $A\mono X$, we say that $x\in A$ is {\em non-regular} if, for some $\ell\in\om$, for any $A$-name $p$ of $x$ and $X$-name $q$ of $x$, there exists a sequence $(y_n)_{n\in\om}$ of points in $A$ such that
\begin{enumerate}
\item $p\upto\ell$ cannot be extended to an $A$-name of $y_n$ for any $n\in\om$.
\item for any $m$, $q\upto m$ can be extended to an $X$-name of $y_n$ for some $n\in\om$.
\end{enumerate}
\end{definition}

\begin{example}\label{exa:LLPO-has-non-regular-point}
${\sf L}$ has a non-regular point.
We claim that $z=(0^\infty,0^\infty)$ is a non-regular point of ${\sf L}$.
To see this, note that a name of $z\in{\sf L}$ is of the form $q_i=(i,0^\infty,0^\infty)$ for some $i<2$.
Then, put $\ell=1$, $y_m^0=(0^{m}1^\infty,0^\infty)$ and $y_m^1=(0^\infty,0^m1^\infty)$.
Then $q_i\upto\ell$ cannot be extended to an ${\sf L}$-name of $y_m^i$ for each $i<2$, while $z\upto m$ can be extended to $X$-names of both $y_m^0$ and $y_m^1$.
\end{example}

\begin{lemma}\label{lem:tartan-non-regular-general}
If a subobject $A\mono X$ has a non-regular point, then $A$ is not regular.
\end{lemma}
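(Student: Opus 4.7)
The plan is to argue by contradiction, exploiting continuity of any putative inverse to the inclusion. Suppose $A \mono X$ is regular and has a non-regular point $x$ witnessed by $\ell \in \om$. By regularity there is a (partial) realizable function $F$ on ${\tt Code}$ such that whenever $q$ is an $X$-name of some point in $|A|$, $F(q)$ is an $A$-name of that point. Pick any $X$-name $q_0$ of $x$ and set $p_0 = F(q_0)$, which is then an $A$-name of $x$.

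The key continuity step: because $F$ is realizable (in ${\sf K}_2$ or ${\sf KV}$), the initial segment $p_0 \upto \ell$ is determined by some finite initial segment $q_0 \upto m$ of its input. That is, for any $q'$ extending $q_0 \upto m$ and on which $F$ is defined, $F(q')$ extends $p_0 \upto \ell$. Now apply the non-regularity hypothesis to the pair $(p_0, q_0)$: we obtain a sequence $(y_n)_{n \in \om}$ in $|A|$ satisfying clauses (1) and (2) of Definition \ref{def:non-regular-point}. By (2) applied to this specific $m$, some $y_n$ admits an $X$-name $q'$ extending $q_0 \upto m$. Then $F(q')$ is an $A$-name of $y_n$ which, by the continuity observation, extends $p_0 \upto \ell$. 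This directly contradicts clause (1), which asserts that $p_0 \upto \ell$ cannot be extended to any $A$-name of $y_n$.

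The main subtle point is just making sure the order of quantifiers matches: the definition lets us choose the sequence $(y_n)$ after seeing $p$ and $q$, which is exactly what we need, since our choice of $p_0$ depends on $q_0$ (and on $F$), and the modulus $m$ of continuity depends on both. For the ${\sf K}_1$ case (if one wants the lemma there too) one would replace the finite-initial-segment argument by use of the use principle / a finite trace of the computation of $F$, but the statement in the excerpt is phrased in the sequence model where the above reading of "$p \upto \ell$" is literal, so no additional subtlety arises.
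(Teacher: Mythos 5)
Your proof is correct and follows essentially the same route as the paper: assume regularity, take a tracker $F$ of the partial inverse of the inclusion, use continuity to fix a modulus $m$ so that $F(q_0)\upto\ell$ is determined by $q_0\upto m$, then invoke the non-regularity witnesses $(y_n)$ for the pair $(F(q_0),q_0)$ and clauses (1)--(2) to reach the same contradiction. Your remark on the quantifier order (choosing $(y_n)$ after $p_0,q_0$, with the modulus handled by clause (2) holding for every $m$) matches the intended reading, so nothing further is needed.
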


\begin{proof}
Assume that $A\mono X$ is regular.
Then the inclusion map $A\mono X$ has a partial left-inverse morphism, so let $r$ be its tracker.
That is, if $q$ is an $X$-name of $x\in|A|$, then $r(q)$ is an $A$-name of $x$.
Now, suppose that $A$ has a non-regular point $x\in A$, and let $\ell$ be a length for its non-regularity.
For an $X$-name $q$ of $x$ and an $A$-name $r(q)$ of $x$, we get a witness $(y_t)_{t\in\om}$ for non-regularity.
By continuity, after reading some $m$ bits of $q$, the first $\ell$ bits of $r(q)$ are determined.
Then $q\upto m$ can be extended to an $X$-name $q'$ of $y_t\in |A|$, so $r(q')$ must be an $A$-name of $y_t$.
However, $r(q')$ extends $r(q)\upto\ell$, which cannot be an $A$-name of $y_t$.
\end{proof}

\begin{cor}
A union of two $\Pi^0_1$ subobjects of $\om^\om$ is not necessarily $\Pi^0_1$.
Indeed, ${\sf L}$ is not ${\Pi}^0_2$, and thus $\Sigma_{\pi\cup\pi}\not\subseteq{\Pi}^0_2$.
\end{cor}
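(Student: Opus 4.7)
The plan is to chain together three results already established in the excerpt. The subobject ${\sf L}=\uplus_{\om^\om}(\{0^\infty\},\{0^\infty\})$ is a witnessed union of two $\Pi^0_1$ subobjects by construction, so ${\sf L}\in\Sigma_{\pi\cup\pi}$. The strategy is to show that ${\sf L}$ fails to be regular, and then invoke Observation~\ref{obs:basic-positive-Pi-0-2-regular} (every $\Pi^0_2$ subobject is regular) to deduce that ${\sf L}$ is not $\Pi^0_2$, a fortiori not $\Pi^0_1$.

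First I would recall Example~\ref{exa:LLPO-has-non-regular-point}, which exhibits $z=(0^\infty,0^\infty)$ as a non-regular point of ${\sf L}$: any ${\sf L}$-name of $z$ must commit to an index $i<2$ in its first coordinate, whereas the $X$-name $(0^\infty,0^\infty)$ can be extended to an $X$-name of either of the sequences $y_m^0=(0^m1^\infty,0^\infty)$ or $y_m^1=(0^\infty,0^m1^\infty)$, only one of which the committed ${\sf L}$-name can handle. Applying Lemma~\ref{lem:tartan-non-regular-general}, this non-regular point witnesses that ${\sf L}$ is not a regular subobject of $\om^\om\times\om^\om$.

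Next I would combine this with Observation~\ref{obs:basic-positive-Pi-0-2-regular}. That observation says that every $\Pi^0_2$ subobject is regular, since a name of a point in $\bignplus_{n\in\om}B_n$ for open $B_n$ can always be taken to be a constant function on an $X$-name. Therefore ${\sf L}$, being non-regular, cannot be $\Pi^0_2$. Since ${\sf L}\in\Sigma_{\pi\cup\pi}$, this immediately gives $\Sigma_{\pi\cup\pi}\not\subseteq\Pi^0_2$, and in particular ${\sf L}$ is not $\Pi^0_1$ (as closed subobjects are regular by Observation~\ref{obs:open-closed-is-regular}), so a witnessed union of two $\Pi^0_1$ subobjects is not necessarily $\Pi^0_1$.

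There is essentially no obstacle: all the hard work has been done in Example~\ref{exa:LLPO-has-non-regular-point} (showing $(0^\infty,0^\infty)$ is non-regular), Lemma~\ref{lem:tartan-non-regular-general} (non-regular point implies non-regular), and Observation~\ref{obs:basic-positive-Pi-0-2-regular} ($\Pi^0_2\Rightarrow$ regular). The corollary is a three-line assembly of these facts. The only thing to be careful about is citing the right ambient space ($\om^\om\times\om^\om$, which via pairing is itself a represented space of sequences) and noting explicitly that the argument is coding-system sensitive (it uses continuity of realizers, so applies in ${\sf K}_2$ or ${\sf KV}$, which is precisely the setting in which Example~\ref{exa:LLPO-has-non-regular-point} was stated).
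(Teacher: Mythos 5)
Your argument is correct and is exactly the paper's proof: combine Example \ref{exa:LLPO-has-non-regular-point} with Lemma \ref{lem:tartan-non-regular-general} to conclude ${\sf L}$ is not regular, then apply Observation \ref{obs:basic-positive-Pi-0-2-regular} to rule out ${\Pi}^0_2$. The extra remarks on the ambient space and the coding system are fine but not needed beyond what the cited results already assume.
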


\begin{proof}
By Lemma \ref{lem:tartan-non-regular-general} and Example \ref{exa:LLPO-has-non-regular-point}, ${\sf L}$ is not regular.
However, by Observation \ref{obs:basic-positive-Pi-0-2-regular}, every ${\Pi}^0_2$ subobject is regular.
Hence, ${\sf L}$ is not ${\Pi}^0_2$.
\end{proof}

\subsection{Non-complete $\Sigma^0_2$ sets}

Classically, it is well-known that ${\rm Fin}=\{x\in\om^\om:(\exists n)(\forall m\geq n)\;x(m)=0\}$ is $\Sigma^0_2$-complete.
Surprisingly, Veldman \cite{Vel09} showed that this does not hold in certain intuitionistic systems.
His insightful work suggests that its witnessed counterpart is not $\Sigma^0_2$-complete in our setting.
\[{\sf Fin}:=\eval{x\in\om^\om:(\exists n)(\forall m\geq n)\;x(m)=0}=\biguplus_{n\in\om}\bignplus_{m\geq n}\{x\in\om^\om:x(m)=0\}\]

For the explicit description, recall Example \ref{exa:subobject-fin-def}.
Our first goal is to show the following:

\begin{theorem}\label{thm:fin-plus-incomplete}
${\sf Fin}$ is not $\Sigma^0_2$-complete; indeed, ${\sf L}\not\leq_{\sf m}{\sf Fin}$.
\end{theorem}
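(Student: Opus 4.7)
The plan is to derive a contradiction from the ``binary-choice'' non-regularity of ${\sf L}$ at $z=(0^\infty,0^\infty)$ established in Example \ref{exa:LLPO-has-non-regular-point}, together with the fact that every ${\sf Fin}$-name carries its witness as its initial component. Informally, continuity of $r_+$ on that initial component forces a single witness value, whereas the two ${\sf L}$-names of $z$ would a priori want to feed in different witnesses; taking the maximum of the two reconciles both branches.

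Suppose $(\varphi,r_-,r_+)$ realizes ${\sf L}\leq_m{\sf Fin}$. Consider the two ${\sf L}$-names $p_0=\pair{0,0^\infty,0^\infty}$ and $p_1=\pair{1,0^\infty,0^\infty}$ of $z$, and let $\pair{n_i,\varphi(z)}:=r_-(p_i)$ for $i\in\{0,1\}$. Using continuity of $r_-$, propagate each witness along one branch: for sufficiently large $K$ the ${\sf L}$-name $\pair{1,0^K1^\infty,0^\infty}$ of $y_K^0:=(0^K1^\infty,0^\infty)$ agrees with $p_1$ on a long enough prefix that $r_-$ still outputs a ${\sf Fin}$-name with initial witness $n_1$, so $\varphi(y_K^0)(k)=0$ for all $k\geq n_1$. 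Symmetrically, from $p_0$ one obtains $\varphi(y_K^1)(k)=0$ for all $k\geq n_0$, where $y_K^1:=(0^\infty,0^K1^\infty)$, for all large $K$.

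Now set $N=\max(n_0,n_1)$. Then $\pair{N,\varphi(z)}$ is a valid ${\sf Fin}$-name of $\varphi(z)$, and, crucially, $\pair{N,\varphi(y_K^j)}$ is a valid ${\sf Fin}$-name of $\varphi(y_K^j)$ for both $j\in\{0,1\}$ and every sufficiently large $K$. Let $q$ be an $\om^\om\times\om^\om$-name of $z$, and apply $r_+$ to $(q,\pair{N,\varphi(z)})$, producing an ${\sf L}$-name of $z$ of the form $\pair{i^\ast,\cdot}$ with $i^\ast\in\{0,1\}$. By continuity of $r_+$, the bit $i^\ast$ is determined after reading a finite prefix of the input. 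Choose $K$ large enough that the $\om^\om\times\om^\om$-name of $y_K^{i^\ast}$ agrees with $q$ on that prefix (immediate, since both begin with long blocks of zeros) and $\pair{N,\varphi(y_K^{i^\ast})}$ agrees with $\pair{N,\varphi(z)}$ on the corresponding prefix (using continuity of $\varphi$ and equality of the initial witness components). Then $r_+$ on this perturbed valid input must again output an ${\sf L}$-name beginning with $i^\ast$; but the only ${\sf L}$-names of $y_K^{i^\ast}$ begin with $1-i^\ast$, since the disjunct of $x=0^\infty\lor y=0^\infty$ that $r_+$ committed to is precisely the one failing at $y_K^{i^\ast}$. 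This is the desired contradiction.

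The main obstacle is exactly the witness alignment encapsulated in the choice $N=\max(n_0,n_1)$. If one directly fed $r_+$ the name $\pair{n_{i^\ast},\varphi(z)}$, then the perturbed name $\pair{n_{i^\ast},\varphi(y_K^{i^\ast})}$ could fail to be a valid ${\sf Fin}$-name, because the cross-branch continuity of $r_-$ only guarantees $\varphi(y_K^{i^\ast})(k)=0$ for $k\geq n_{1-i^\ast}$ rather than for $k\geq n_{i^\ast}$. Merging the two one-sided bounds by $N$ yields a uniform witness valid on both branches simultaneously, which is the essential device that makes the continuity-perturbation reach a contradiction regardless of which branch $r_+$ selects.
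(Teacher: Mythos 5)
Your proof is correct, and it is essentially the paper's own argument inlined at the single point $(0^\infty,0^\infty)$: the step $N=\max(n_0,n_1)$ is precisely the amalgamability of ${\sf Fin}$ (Proposition \ref{prop:fin-plus-amalgamable}), and the closing continuity-perturbation along the branch that $r_+$ commits to is precisely the non-regularity of ${\sf L}$ at that point (Example \ref{exa:LLPO-has-non-regular-point} with Lemma \ref{lem:tartan-non-regular-general}), combined exactly as in Lemma \ref{lem:almost-regular-amalgamable-regular}. The only real difference is cosmetic: by working at the concrete point you apply $r_-$ only to genuine ${\sf L}$-names, so you avoid the totalization of $r_-$ that the general lemma needs.
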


To prove this, let us introduce a few notions.
We say that a subobject $A\mono Z$ is {\em almost $\Pi^0_1$} if it is a finite witnessed union of $\Pi^0_1$ subobjects of $Z$.
%
We say that a subobject $A\mono Z$ is {\em amalgamable} if there exists $A'\equiv A$ and there exists a partial realizable function $F$ that, given a $Z$-name of $x\in|A'|$ and a finite sequence $(p_1,\dots,p_n)$, returns a name of $x\in A'$, whenever there exists $i\leq n$ such that $p_i$ is a name of $x\in A'$.

\begin{prop}\label{prop:fin-plus-amalgamable}
${\sf Fin}$ is an amalgamable subobject of $\om^\om$.
\end{prop}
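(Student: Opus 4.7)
The plan is to take $A' = {\sf Fin}$ itself and build the amalgamation function $F$ directly from the upward monotonicity of the witness set. Recall from Example~\ref{exa:subobject-fin-def} that a ${\sf Fin}$-name of $x \in |{\sf Fin}|$ has the form $\langle n, x \rangle$, where $n$ is a witness in the sense that $x(m) = 0$ for all $m \geq n$. The key property to exploit is that witnesses are upward closed: if $n$ is a valid witness for $x$, then so is every $N \geq n$.

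Given an $\om^\om$-name of $x$ (which, since the representation on $\om^\om$ is the identity, is $x$ itself) together with a finite sequence $(p_1,\dots,p_k)$ of candidate ${\sf Fin}$-names, I would define $F$ to parse each $p_i$ as a pair $\langle n_i, y_i \rangle$, discard the $y_i$'s, compute $N = \max_{i \leq k} n_i$, and return $\langle N, x \rangle$. This map is plainly realizable in all three coding systems, since it uses only projection, maximum over a finite list, and pairing with the supplied $\om^\om$-name.

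For correctness, suppose some $p_{i_0}$ is a bona fide ${\sf Fin}$-name of $x$. Then $y_{i_0} = x$ and $x(m) = 0$ for all $m \geq n_{i_0}$. Since $N \geq n_{i_0}$, the same equality holds for all $m \geq N$, and hence $\langle N, x \rangle$ is itself a valid ${\sf Fin}$-name of $x$, as required by the definition of amalgamability.

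There is essentially no obstacle: amalgamability of ${\sf Fin}$ is a direct consequence of the monotonicity of its witnesses under $\leq$ on $\om$. The same strategy (replace the witness by the maximum of the finitely many candidate witnesses, then re-pair with the given outer name) will apply verbatim to any $\Sigma^0_2$ subobject whose witness set is monotonically upward closed, which suggests that amalgamability is a robust feature of this class.
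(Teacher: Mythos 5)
Your proof is correct and is essentially identical to the paper's argument: both take $A'={\sf Fin}$ itself, extract the candidate witnesses from the given pairs, take their maximum, and re-pair it with the supplied $\om^\om$-name of $x$, relying on the upward closure of witnesses. No further comment is needed.
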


\begin{proof}
Let $x\in|{\sf Fin}|$ and $(p_1,\dots,p_n)$ be given.
Assume that $p_i$ is a name of $x\in {\sf Fin}$ for some $i\leq n$.
As a name of an element of ${\sf Fin}$ is of the form $\langle s,z\rangle$, it does not lose generality to assume that each $p_i$ is of the form $\langle s_i,z_i\rangle$.
By our assumption, at least one of these is a correct name of $x\in {\sf Fin}$, so it is of the form $p_j=\langle s_j,x\rangle$ for some $j$.
Here, recall that $\langle s_j,x\rangle$ is a name of $x\in {\sf Fin}$ if and only if $x(t)=0$ for any $t\geq s_j$.
Note that, if we put $m=\max_{i\leq n}s_i$, then $\langle m,x\rangle$ is also a name of $x\in {\sf Fin}$.
This is because we have $s_j\leq m$, which implies $x(t)=0$ for any $t\geq m$.
In summary, $F(x,p_1,\dots,p_n)=\langle \max_{i\leq n}\pi_0(p_i),x\rangle$ gives a name of $x\in {\sf Fin}$.
Hence, ${\sf Fin}$ is amalgamable.
\end{proof}

\begin{lemma}\label{lem:almost-regular-amalgamable-regular}
If an almost $\Pi^0_1$ subobject $A\mono X$ is many-one reducible to an amalgamable subobject $B\mono Y$, then $A$ is regular.
\end{lemma}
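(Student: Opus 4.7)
The plan is to extract from the reduction a procedure that converts any $X$-name of $x\in|A|$ into an $A$-name, which is precisely what it means for $A$ to be regular.

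First, I would unpack the hypotheses. Write $A\equiv C_0\wunion\cdots\wunion C_{n-1}$ for some $\Pi^0_1$ subobjects $C_i\mono X$. By Observation~\ref{obs:open-closed-is-regular} each $C_i$ is regular, so from any $X$-name of a point $x\in|C_i|$ one can realizably recover a $C_i$-name. Consequently, given an $X$-name $p$ of $x\in|A|=\bigcup_i|C_i|$, the candidate $a_i:=\langle i,p\rangle$ (after repackaging $p$ into the appropriate $C_i$-name format) is a valid $A$-name whenever $x\in|C_i|$, and at least one $i<n$ yields such a valid name.

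Second, I would fix the reduction. Let $\varphi,r_-,r_+$ witness $A\leq_m B$ and let $u$ be a tracker of $\varphi$. Since $\leq_m$ is preserved under equivalence, I may replace $B$ with the amalgamable representative $B'\equiv B$ provided by the definition of amalgamability and assume, without loss of generality, that $B$ itself comes with an amalgamation function $F$. For each $i<n$, compute $b_i:=r_-(a_i)$. Whenever $a_i$ is a genuine $A$-name of $x$ (which happens for at least one $i$ by the previous paragraph), clause~(2) of Definition~\ref{def:represented-many-one-reducibility} guarantees that $b_i$ is a genuine $B$-name of $\varphi(x)\in|B|$.

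Third, I would amalgamate and conclude. Compute $y:=u(p)$, a $Y$-name of $\varphi(x)\in|B|$. Since at least one of the $b_i$'s is a valid $B$-name of $\varphi(x)$, amalgamability yields that $b:=F(y,b_0,\ldots,b_{n-1})$ is also a valid $B$-name of $\varphi(x)$. Now clause~(3) of Definition~\ref{def:represented-many-one-reducibility} applies: $r_+(p,b)$ is an $A$-name of $x$. The entire procedure $p\mapsto r_+(p,F(u(p),r_-(\langle 0,p\rangle),\ldots,r_-(\langle n-1,p\rangle)))$ is realizable, so $A$ is regular. The main technical obstacle is only the bookkeeping at the start: the equivalences $A\equiv\biguplus_i C_i$ and $B\equiv B'$ must be handled concretely so that the candidate names $a_i$ and the downstream compositions of $r_-$, $u$, $F$, and $r_+$ line up; this is routine given the explicit name conventions for witnessed unions and pullbacks fixed in Section~3.
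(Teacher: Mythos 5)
Your overall strategy is exactly the paper's: decompose $A$ into finitely many $\Pi^0_1$ (hence regular) pieces so that the candidate $A$-names of $x$ are just $\langle i,p\rangle$ for $i<n$, push each candidate through $r_-$, amalgamate the resulting candidate $B$-names of $\varphi(x)$ with the $Y$-name $u(p)$ using $F$, and finish with $r_+$. However, there is a genuine gap in the step you dismiss as routine bookkeeping: $r_-$ is only a \emph{partial} realizable function, and it is only guaranteed to behave (or even converge) on genuine $A$-names. For those $i$ with $x\notin|C_i|$, the candidate $\langle i,p\rangle$ is not an $A$-name, and $r_-(\langle i,p\rangle)$ may simply diverge. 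In that case the tuple $\bigl(b_0,\dots,b_{n-1}\bigr)$ you feed to $F$ never materializes as a finite sequence of codes, so the composite $p\mapsto r_+\bigl(p,F(u(p),r_-(\langle 0,p\rangle),\dots,r_-(\langle n-1,p\rangle))\bigr)$ need not be realizable at all; amalgamability does not help you until all of its arguments are actual elements of ${\tt Code}$.

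The paper closes this hole using the closedness of the pieces, and this is precisely where ``almost $\Pi^0_1$'' is needed beyond mere regularity: the condition $x\in|C_i|$ is $\Pi^0_1$ uniformly in the $X$-name $p$, so its failure is semi-decidable from $p$. One therefore runs $r_-(\langle i,p\rangle)$ in parallel with the search for a refutation of $x\in|C_i|$, and upon refutation switches to outputting arbitrary values; this totalizes every candidate $b_i$ without harming correctness, since amalgamability only requires that at least one candidate be a correct $B$-name of $\varphi(x)$. With this totalization inserted before the application of $F$, your argument coincides with the paper's proof. (A smaller point: you should also state explicitly, as the paper does, that for a $\Pi^0_1$ subobject the recovered $C_i$-name can be taken to be $p$ itself, so no additional partial inverse is being invoked there.)
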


\begin{proof}
Assume that $A\leq_{\sf m}B$ is witnessed by $\varphi\colon X\to Y$ and $r_-,r_+$.
To show that $A\mono Z$ is regular, suppose that an $X$-name $p$ of $x\in|A|$ is given.
Since $A$ is almost $\Pi^0_1$ and any $\Pi^0_1$ subobject is regular, we may assume that a name of $x\in A$ is of the form $(k,p)$ for some $k<n$.
Then $(r_-(k,p))_{k<n}$ is a sequence of candidates of names of $\varphi(x)\in B$.
That is, since $(k,p)$ is a correct name of $x\in A$ for some $k<n$, $r_-(k,p)$ must be a correct name of $\varphi(x)\in B$.
Here, note that $r_-(k,p)$ may be undefined, but the condition $x\in |A_k|$ is $\Pi^0_1(p)$, so once we see that this is refuted, we modify it to output some value.
In this way, we can assume that $r_-(k,p)$ is defined.

A tracker of $\varphi$ transforms an $X$-name of $x$ into an $Y$-name of $\varphi(x)$, which, together with the above candidates, can be used to obtain a correct $B$-name $q$ of $\varphi(x)$ by the assumption that $B$ is amalgamable.
Then, $r_+(p,q)$ is an $A$-name of $x\in A$.
This concludes that $A$ is regular.
\end{proof}

\begin{proof}[Proof of Theorem \ref{thm:fin-plus-incomplete}]
By Proposition \ref{prop:fin-plus-amalgamable}, ${\sf Fin}$ is amalgamable.
Obviously, ${\sf L}$ is almost $\Pi^0_1$.
Hence, by Lemma \ref{lem:almost-regular-amalgamable-regular}, if ${\sf L}\leq_{\sf m}{\sf Fin}$ then ${\sf L}$ is regular, which is impossible by Lemma \ref{lem:tartan-non-regular-general} and Example \ref{exa:LLPO-has-non-regular-point}.
\end{proof}

As in classical reducibility, one can see that ${\sf Fin}$ is $\Sigma^0_2$-complete w.r.t.~demi-many-one-reducibility.

\begin{obs}
For any $\Sigma^0_2$ subobject $A\mono\om^\om$, we have $A\leq_{\sf m}'{\sf Fin}$.
\end{obs}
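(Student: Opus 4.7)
The plan is to adapt the classical proof that $\mathrm{Fin}$ is $\Sigma^0_2$-complete to the witnessed setting; the key point is that demi-many-one reducibility drops condition~(2) of Definition~\ref{def:represented-many-one-reducibility}, so we need not exhibit an inner reduction sending $A$-names to ${\sf Fin}$-names---only the outer direction. By definition, a $\Sigma^0_2$ subobject $A$ of $\om^\om$ is equivalent to $\biguplus_{n\in\om} B_n$ for a uniformly $\Pi^0_1$ sequence $(B_n)_{n\in\om}$, and by Observation~\ref{obs:open-closed-is-regular} each $B_n$ is regular. The uniform $\Pi^0_1$ structure is then encoded by a morphism $g\colon\om\times\om^\om\to\om^\om$ such that $x\in B_n$ iff $g(n,x)$ never takes the value~$0$; setting $\theta(n,k,x):=(g(n,x)(k)\neq 0)$ gives a decidable predicate uniformly in $(n,k,x)$, so that $x\in|A|$ iff $\exists n\,\forall k.\,\theta(n,k,x)$.

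Next, I define the reduction $\varphi\colon\om^\om\to\om^\om$ stage-by-stage: at stage $s$, let $n(s)$ be the least $n\le s$ with $\theta(n,k,x)$ for all $k\le s$, and $n(s)=\infty$ if no such $n$ exists; put $\varphi(x)(s)=0$ if $n(s)=n(s-1)$, and $\varphi(x)(s)=1$ otherwise. Since computing $\varphi(x)(s)$ consults only finitely many values of $g$, and hence only a finite prefix of any name of $x$, the function $\varphi$ is realizable, hence a morphism. The verification of condition~(1) is then the textbook argument: if $n^{*}:=\min\{n:x\in B_n\}$ exists, every $n<n^{*}$ is refuted by some finite stage and $n^{*}$ is never refuted, so $n(s)$ stabilizes at $n^{*}$ and $\varphi(x)$ is eventually zero; conversely, if $x\notin A$ then every $n$ is eventually refuted, so $n(s)\to\infty$ and $\varphi(x)(s)=1$ for infinitely many $s$.

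For the outer realizer $r_+$ demanded by condition~(3), suppose an $\om^\om$-name $p$ of $x$ and a ${\sf Fin}$-name $\langle N,\varphi(x)\rangle$ of $\varphi(x)$ are given, so that $\varphi(x)(s)=0$ for all $s\ge N$. Since $n(s)$ is then constant past stage $N$, one can compute $n^{*}=n(N)$ from $p$ and $N$. Because $B_{n^{*}}$ is regular, the $\om^\om$-name $p$ itself is already a $B_{n^{*}}$-name of $x$, and $\langle n^{*},p\rangle$ is the desired $A$-name. I expect no real obstacle here, because the whole argument is the classical $\Sigma^0_2$-completeness proof of ${\sf Fin}$ reread as producing only an outer reduction; the sharp contrast with Theorem~\ref{thm:fin-plus-incomplete} is precisely that the inner direction fails---there is no uniform way, from a witness $n$ that $x\in B_n$, to produce a stabilization bound for $\varphi(x)$ without inspecting $x$ further---and this is exactly what the amalgamability/almost-$\Pi^0_1$ obstruction captures.
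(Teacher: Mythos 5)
Your strategy is the right one (and essentially the paper's): run the classical movable-marker reduction to ${\sf Fin}$ and only supply the outer realizer. But the reduction $\varphi$ as you defined it fails condition (1). You output $1$ only when $n(s)$ \emph{changes}, and you set $n(s)=\infty$ when no candidate $n\le s$ survives; a plateau at the symbol $\infty$ therefore registers no change. Concretely, take the uniformly $\Pi^0_1$ family with $B_n=\{x:\forall k\;x(k)\neq 0\}$ for every $n$ (so $\theta(n,k,x)$ iff $x(k)\neq 0$) and $x=0^\infty$: then $\neg\theta(n,0,x)$ for every $n$, so $n(s)=\infty$ for all $s$, hence $\varphi(x)(s)=0$ for all $s\ge 1$ and $\varphi(x)\in|{\sf Fin}|$, although $x\notin|A|$. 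The inference ``if $x\notin A$ then every $n$ is eventually refuted, so $n(s)\to\infty$ and $\varphi(x)(s)=1$ infinitely often'' is exactly where the argument breaks: eventual refutation of every candidate does not force your marker to change infinitely often, because it can get stuck at $\infty$ whenever the refutations arrive at least as fast as new candidates appear.

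The paper's bookkeeping avoids this: it tracks the \emph{largest} $n_s\le s$ such that every $k\le n_s$ is refuted within $s-n_s$ steps, and outputs $1$ exactly when $n_s$ increases. When $x\notin|A|$ this quantity tends to infinity through finite values, so $1$ occurs infinitely often; when $x\in|A|$ it is bounded by the least witness, and the final value plus one is the witness recovered by $r_+$. Equivalently, you can keep your $n(s)$ but additionally output $1$ at every stage with $n(s)=\infty$; then $\varphi(x)$ is eventually zero iff some candidate is never refuted, i.e.\ iff $x\in|A|$, and your outer realizer goes through: past a ${\sf Fin}$-witness $N$ the constant value $n(N)$ is automatically finite and never refuted, so $x\in B_{n(N)}$. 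The rest of your write-up is fine and matches the paper's proof: extracting a uniformly decidable $\theta$ from the uniformly $\Pi^0_1$ presentation, and using regularity of the $B_n$ (presented as subspaces) together with the equivalence $A\equiv\biguplus_n B_n$ to turn $\langle n^{*},p\rangle$ into an $A$-name.
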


\begin{proof}
The argument is similar to the classical $\Sigma^0_2$-completeness proof of ${\sf Fin}$.
Let $A=\eval{x:\exists n\forall m\theta(n,m,x)}$ be given, where $\theta$ is decidable.
Given $x$, we construct $\varphi(x)$.
In order to determine the value of $\varphi(x)(s)$, we first calculate the largest $n_s\leq s$ fulfilling the following condition:
For any $k\leq n_s$ there exists $m\leq s-n_s$ such that $\neg\theta(k,m,x)$.
If $n_s>n_{s-1}$ then put $\varphi(x)(s)=1$; otherwise $\varphi(x)(s)=0$.
Note that if $x\in A$ and if $n$ is the least witness for $x\in A$, then we have $n_s\leq n$ for any $s$.
Hence, for the least $s$ such that $n_s=n$, the value $s+1$ must be a witness for $\varphi(x)\in{\sf Fin}$.
Conversely, if $s$ is a witness for $\varphi(x)\in{\sf Fin}$, then $n_t=n_s$ for any $t\geq s$.
Then it is easy to see that $n_s+1$ is a witness for $x\in A$.
\end{proof}

The above proof shows that given the least witness for $x\in A$, one can compute a witness for $\varphi(x)\in{\sf Fin}$.
Of course, there may also be a non-least witness for $x\in A$, but if it is always possible to obtain the least witness for any $x\in A$, this suggests that $A$ be many-one reducible to ${\sf Fin}$.
Let us explore this idea further.

\subsection{Classification of $\Sigma^0_2$-complete sets}
A closer look at classical $\Sigma^0_2$-complete sets reveals that there are in fact various qualitative differences among them.
A $\Sigma^0_2$ set is a countable union of $\Pi^0_1$ sets, but there are various types of ``countable union,'' such as separated union, disjoint union, increasing union, and ordinary union.
In classical theory, separated union (coproduct) and ordinary union are distinguished, but others are not.

This situation can also be understood as a classification of $\Sigma^0_2$ formulas.
First note that ${\sf Fin}$ is defined by a $\Sigma^0_2$ formula of the form $\exists n\forall m\geq n.\varphi(m,x)$, where $\varphi$ is decidable.
For such a formula, if a witness $n$ is given, one can effectively find the smallest witness $n_0$ by checking $\varphi(m,x)$ holds for any $n_0\leq m\leq n$.
This means that such a formula can be replaced with a $\Sigma^0_2$ formula $\exists n\psi(n,x)$ having the following ``unique witness property'':
\[\exists n\psi(n,x)\iff\exists!n\psi(n,x).\]


Also, many $\Sigma^0_2$ sets can be written as an increasing union of $\Pi^0_1$ sets.
Expressed as a formula $\exists n\psi(n,x)$, this means that they have the following ``increasing witness property'':
\[m\leq n\;\&\;\psi(m,x)\implies\psi(n,x).\]

Of course, having the increasing witness property is precisely being defined by a $\Sigma^0_2$ formula of the form $\exists n\forall m\geq n.\varphi(m,x)$, where $\varphi$ is a $\Pi^0_1$ formula.

The notion of classical $\Sigma^0_2$-completeness makes no distinction between these special and general $\Sigma^0_2$-formulas.
The notion of many-one reducibility for nonregular subobjects helps to clarify these qualitative differences.

\begin{definition}
Let $A\mono X$ be a ${\Sigma}^0_2$ subobject.
\begin{enumerate}
\item $A$ has the {\em unique witness property} iff there exists a uniformly ${\Pi}^0_1$ sequence $(A_n)_{n\in\om}$ of subobjects of $X$ such that $A\equiv\biguplus_{n\in\om}A_n$ and $|A_n|\cap|A_m|=\emptyset$ whenever $n\not=m$.
\item $A$ has the {\em increasing witness property} iff there exists a uniformly ${\Pi}^0_1$ sequence $(A_n)_{n\in\om}$ of subobjects of $X$ such that $A\equiv\biguplus_{n\in\om}A_n$ and $|A_n|\subseteq|A_m|$ whenever $n\leq m$.
\end{enumerate}
\end{definition}


\begin{obs}\label{obs:unique-witness-formula-chara}
Let $A\mono X$ be a $\Sigma^0_2$ subobject such that $A=\eval{x\in X:\exists n\forall m\geq n.\;f(m,x)=1}$ for some morphism $f\colon \om\times X\to 2$.
Then $A$ has the unique witness property.
\end{obs}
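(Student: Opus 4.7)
The plan is to extract the least witness. For each $n\in\om$, I would define the subobject
\[A_n=\eval{x\in X:(\forall m\geq n)\;f(m,x)=1\ \land\ (n=0\lor f(n-1,x)=0)}.\]
Since $f$ is a morphism to the decidable space $2$, the condition $n=0\lor f(n-1,x)=0$ is decidable uniformly in $n$, and $\forall m\geq n.\;f(m,x)=1$ is uniformly $\Pi^0_1$, so $(A_n)_{n\in\om}$ is uniformly $\Pi^0_1$. Disjointness of the $|A_n|$ is immediate: if $n<n'$ and $x\in|A_n|\cap|A_{n'}|$, then $x\in|A_n|$ with $n\leq n'-1$ forces $f(n'-1,x)=1$, while $x\in|A_{n'}|$ with $n'>0$ forces $f(n'-1,x)=0$.

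It remains to establish $A\equiv\biguplus_{n\in\om}A_n$. Here I would use the fact that each $A_n$ and each $B_n:=\bignplus_{m\geq n}\{x:f(m,x)=1\}$ is $\Pi^0_1$, hence regular by Observation \ref{obs:open-closed-is-regular}, so their names are simply $X$-names. Thus, up to equivalence, a name of a point of $\biguplus_n A_n$ is $\pair{n,p}$ where $p$ is an $X$-name of some $x\in|A_n|$, and similarly for $A=\biguplus_n B_n$.

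For the inclusion $\biguplus_n A_n\subseteq A$ the identity function works, since $|A_n|\subseteq|B_n|$ and the defining $\Pi^0_1$ information for $A_n$ already witnesses membership in $B_n$. For $A\subseteq\biguplus_n A_n$, given a name $\pair{n,p}$ of $x\in A$, I would evaluate $f$ on $x$ at the finitely many points $n-1,n-2,\ldots,0$ (each computable from $p$ via a tracker of $f$) and set $n_0$ to be the least $k\leq n$ such that $f(m,x)=1$ for all $m\in[k,n)$; equivalently, decrement $k$ from $n$ while $k>0$ and $f(k-1,x)=1$. Then $n_0$ is the least witness for $x\in A$, so $\pair{n_0,p}$ is a valid name of $x\in A_{n_0}\subseteq\biguplus_n A_n$, and the map $\pair{n,p}\mapsto\pair{n_0,p}$ is realizable.

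I do not foresee a serious obstacle; the only point requiring care is confirming that the reduction $\pair{n,p}\mapsto\pair{n_0,p}$ is tracked uniformly, which reduces to the routine fact that a fixed tracker of $f$ can compute each $f(k-1,x)$ from $p$ and the finite integer $k\leq n$.
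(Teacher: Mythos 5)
Your proof is correct and follows essentially the same route as the paper's: decompose $A$ by the least witness into the uniformly $\Pi^0_1$, pairwise disjoint pieces $A_n$ (your explicit $n=0\lor f(n-1,x)=0$ clause is just a slightly more careful rendering of the paper's $f(n-1,x)=0$), and realize $A\subseteq\biguplus_n A_n$ by computing the least witness $n_0\leq n$ from a given witness $n$ using a tracker of $f$. The additional remarks about regularity of the $\Pi^0_1$ pieces and uniform tracking are exactly the routine points the paper leaves implicit.
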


\begin{proof}
Define $A_n=\eval{x\in X:\forall m\geq n.f(m,x)=1\land f(n-1,x)=0}$.
Then, $(A_n)_{n\in\om}$ is a uniform $\Pi^0_1$ sequence, and $|A_n|\cap |A_m|=\emptyset$ for $n\not=m$.
Clearly, we have $\biguplus_nA_n\subseteq A$.
To see $A\subseteq\biguplus_nA_n$, if $n$ is a witness for $x\in A$, then search for the least $n_0\leq n$ such that $f(m,x)=1$ for any $n_0\leq m\leq n$.
By our assumption on $n$, one can see that $n_0$ is the least witness for $x\in A$.
Therefore, $x\in A_{n_0}$.
As $n\mapsto n_0$ is computable, we have $A\subseteq\biguplus_nA_n$.
\end{proof}

\begin{obs}
A $\Sigma^0_2$ subobject $A\mono X$ has the increasing witness property iff there exists a $\Pi^0_1$ subobject $P\mono\om\times X$ such that $A=\eval{x\in X:\exists n\forall m\geq n.\;P(m,x)}$.
\end{obs}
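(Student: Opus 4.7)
The plan is to prove both directions using a single construction: for any $\Pi^0_1$ subobject $P \mono \om \times X$, set $A_n := \bignplus_{m \geq n} P_m$, and show that this family is uniformly $\Pi^0_1$, increasing in $n$, and satisfies $\biguplus_n A_n = \eval{x : \exists n \forall m \geq n.\ P(m,x)}$. The $(\Leftarrow)$ direction then follows by reading this family off, and $(\Rightarrow)$ by identifying any witnessing family with one of this form.

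The key technical ingredient is the uniform $\Pi^0_1$-ness of $(A_n)_n$. Fix a morphism $\varphi : \om \times X \to \mathbb{S}$ realizing $P$ as the pullback of $\bot : \mathbf{1} \mono \mathbb{S}$, so that $(m,x) \in |P|$ iff $\varphi(m,x) = \bot$. Construct $\psi_n : X \to \mathbb{S}$ by, on an $X$-name of $x$, dovetailing the evaluations of $\varphi(m,x)$ for $m \geq n$ and emitting a $0$ as soon as any of them produces one. Then $\psi_n(x) = \top$ iff $\exists m \geq n.\ \varphi(m,x) = \top$, so $\psi_n(x) = \bot$ iff $x \in |A_n|$, and the pullback of $\bot$ along $\psi_n$ is $A_n$, uniformly in $n$. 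The $(\Leftarrow)$ direction now follows, since $|A_n| \subseteq |A_{n+1}|$ is automatic from restricting the index range and $A \equiv \biguplus_n A_n$ is the definitional unfolding of the formula. For $(\Rightarrow)$, given the witnessing uniformly $\Pi^0_1$ increasing family $(B_n)_n$, which comes from a $\Pi^0_1$ subobject $P \mono \om \times X$ with $B_n \equiv P_n$, one shows $B_n \equiv \bignplus_{m \geq n} P_m$ uniformly in $n$: the underlying sets agree by the increasing property; one direction applies the universal map at $m = n$ to pass from $\bignplus_{m \geq n} P_m$ to $P_n$; the other direction extracts an $X$-name of $x$ from a $B_n$-name and pairs it with canonical names of $m$ to produce $P_m$-names uniformly, using the fact that names in $P$ are just $\om \times X$-names.

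The main obstacle is verifying that $\bignplus_{m \geq n} P_m$ is genuinely $\Pi^0_1$ as a subobject, not merely a $\Pi^0_2$ one. The dovetailing construction of $\psi_n$ works precisely because $\varphi(m,x) = \top$ is the semi-decidable pole of $\mathbb{S}$, so the condition $\exists m \geq n.\ \varphi(m,x) = \top$ is detectable by a single $\mathbb{S}$-valued morphism; had the polarity been reversed, the existential quantification over $m$ would have produced a genuine $\Sigma^0_2$ condition and the argument would collapse. Everything after this reduces to routine bookkeeping with the semantics of $\biguplus$ and $\bignplus$.
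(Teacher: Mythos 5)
Your proof is correct and takes essentially the same route as the paper's: both directions turn on the family $A_n=\bignplus_{m\geq n}P_m$ and the definitional unfolding of $\eval{x:\exists n\forall m\geq n.\;P(m,x)}$ as $\biguplus_n\bignplus_{m\geq n}P_m$, with witnesses preserved via the constant-function and evaluation tricks. The only difference is that you make explicit the Sierpi\'nski dovetailing argument showing that $\bignplus_{m\geq n}P_m$ is uniformly $\Pi^0_1$ (rather than merely $\Pi^0_2$), a step the paper simply asserts.
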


\begin{proof}
For the forward direction, $x\in A$ iff $x\in A_n$ for some $n$, and for such $n$, $x\in A_m$ for any $m\geq n$.
For the backward direction, consider $A_n=\bignplus_{m\geq n}P_m$, where $P_m\mono X$ is the $m$th projection of $P$.
Then $(A_n)_{n\in\om}$ is a uniform $\Pi^0_1$ sequence, and $|A_n|\subseteq|A_m|$ whenever $n\leq m$.
Note that $n$ is a witness for $x\in A$ iff $x\in A_n$, so it is also a witness for $x\in\biguplus_{n\in\om}A_n$.
Thus, $A\equiv\biguplus_{n\in\om}A_n$.
\end{proof}

\begin{obs}\label{obs:sigma02-witnessproperty-downclosed}
Let $A\mono X$ and $B\mono Y$ be ${\Sigma}^0_2$ subobjects such that $A\leq_{\sf m}B$.
\begin{enumerate}
\item If $B$ has the unique witness property, so does $A$.
\item If $B$ has the increasing witness property, so does $A$.
\end{enumerate}
\end{obs}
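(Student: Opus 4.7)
The plan is to transfer the witnessed $\Sigma^0_2$ decomposition of $B$ back to $A$ through the pullback construction, and then verify that the resulting decomposition inherits the set-theoretic witness property (disjointness or increasingness) from the one on $B$.

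First, I would fix $A \leq_m B$ via $\varphi \colon X \to Y$, $r_-$, $r_+$, so that $A \equiv \varphi^\ast B$ in the sense of Definition \ref{def:pullback-in-represented-spaces}. Since $B$ is $\Sigma^0_2$ with the relevant witness property, I may write $B \equiv \biguplus_{n \in \om} B_n$ where $(B_n)_{n \in \om}$ is a uniformly $\Pi^0_1$ sequence witnessed by a $\Pi^0_1$ subobject $Q \mono \om \times Y$, and with either $|B_n| \cap |B_m| = \emptyset$ for $n \neq m$, or $|B_n| \subseteq |B_m|$ for $n \leq m$. The natural candidate decomposition of $A$ is $A_n := \varphi^\ast B_n$ for each $n \in \om$.

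Next, I would verify three things. (i) The set-theoretic witness property is immediate, since $|A_n| = \varphi^{-1}[|B_n|]$, and preimages preserve disjointness as well as inclusions. (ii) To see that $(A_n)_{n \in \om}$ is uniformly $\Pi^0_1$, consider the morphism $\Phi \colon \om \times X \to \om \times Y$ given by $\Phi(n,x) = (n, \varphi(x))$ and form the pullback $\Phi^\ast Q \mono \om \times X$. Since $\Pi^0_1$ (closed) subobjects are pullbacks of $\bot \colon \mathbf{1} \mono \mathbb{S}$, they are stable under pullback along any morphism, so $\Phi^\ast Q$ is $\Pi^0_1$; by commutativity of pullbacks with projection onto the first coordinate, its $n$th projection is exactly $A_n = \varphi^\ast B_n$, establishing uniformity. (iii) I would verify $A \equiv \biguplus_{n \in \om} A_n$ by direct inspection of names: a name of $x \in \varphi^\ast(\biguplus_n B_n)$ is a pair $\langle p, \langle n, q\rangle\rangle$ where $p \vdash_X x$ and $\langle n,q\rangle$ is a $(\biguplus_n B_n)$-name of $\varphi(x)$ (so $q \vdash_{B_n}\varphi(x)$), while a name of $x \in \biguplus_n (\varphi^\ast B_n)$ is a pair $\langle n, \langle p, q\rangle\rangle$ with the same data; the two are realizably interconvertible by rearranging coordinates, giving the equivalence.

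Combining these yields the desired presentation $A \equiv \biguplus_{n \in \om} A_n$ with $(A_n)$ uniformly $\Pi^0_1$ and satisfying disjointness (case 1) or the inclusion condition (case 2), proving both parts simultaneously.

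I do not expect a genuine obstacle: the substance of the argument is that pullback in the category of represented spaces preserves $\Pi^0_1$-ness and commutes with witnessed countable unions indexed by $\om$. The only place requiring care is the bookkeeping in step (iii) to check that the canonical rearrangement of nested pairs is realizable in both directions and that the claimed equivalence is with respect to the same underlying set; this is a routine but slightly finicky verification of tracker code and is where a careful write-up would spend most of its words.
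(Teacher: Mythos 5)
Your proposal is correct and follows essentially the same route as the paper: pull back the decomposition via $A_n:=\varphi^\ast B_n$, note that preimages preserve disjointness and inclusions, and check $\varphi^\ast(\biguplus_n B_n)\equiv\biguplus_n\varphi^\ast B_n$ by rearranging name components. Your step (ii), verifying uniform $\Pi^0_1$-ness of $(\varphi^\ast B_n)_n$ by pulling back along $(n,x)\mapsto(n,\varphi(x))$, is a detail the paper leaves implicit, but it does not change the argument.
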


\begin{proof}
(1)
Assume $B\equiv\biguplus_{n\in\om}B_n$.
If $A\leq_{\sf m}B$ via $\varphi$ then we have $A\equiv\varphi^\ast B$.
Note that $\varphi^\ast(\biguplus_nB_n)\equiv\biguplus_n\varphi^\ast B_n$.
Hence, $|B_n|\cap|B_m|=\emptyset$ implies $|\varphi^\ast B_n|\cap|\varphi^\ast B_m|=\varphi^{-1}[|B_n|]\cap\varphi^{-1}[|B_n|]=\varphi^{-1}[|B_n|\cap |B_m|]=\emptyset$.
Thus, $\varphi^\ast(\biguplus_nB_n)$ has the unique witness property.
Now, $B\equiv\biguplus_nB_n$ implies $A\equiv\varphi^\ast B\equiv\varphi^\ast(\biguplus_nB_n)$; hence, $A$ also has the unique witness property.

(2)
Note that $|B_n|\subseteq|B_m|$ implies $|\varphi^\ast B_n|=\varphi^{-1}[|B_n|]\subseteq\varphi^{-1}[|B_m|]=|\varphi^\ast B_m|$.
Thus, by the same argument as above, one can see that $A$ has the increasing witness property.
\end{proof}

\begin{prop}\label{prop:simga02-uniquewitness-to-increasingwitness}
If a ${\Sigma}^0_2$ subobject $A\mono X$ has the unique witness property then it has the increasing witness property.
\end{prop}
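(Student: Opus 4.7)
The plan is to take $B_n$ to be a $\Pi^0_1$ subobject whose underlying set is $\bigcup_{k \leq n} |A_k|$, and to exploit disjointness of the $A_k$'s to effectively recover, given $n$ and an $X$-name of some $x \in B_n$, the unique index $k^* \leq n$ with $x \in A_{k^*}$.

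First I fix the setup: by the unique witness property, there is a uniformly $\Pi^0_1$ sequence $(A_n)_{n\in\omega}$ with $A \equiv \biguplus_n A_n$ and $|A_n| \cap |A_m| = \emptyset$ for $n \neq m$. By uniformity I can fix a morphism $\Phi \colon \omega \times X \to \mathbb{S}$ so that $x \in A_n$ iff $\Phi(n,x) = \bot$. Then I define $\Psi \colon \omega \times X \to \mathbb{S}$ by $\Psi(n,x) = \bigwedge_{k\leq n}\Phi(k,x)$, the finite meet in $\mathbb{S}$, which is a morphism because meets in $\mathbb{S}$ are realizably computable (one searches, for each output position $m$, for a witness of $\top$ in each of the finitely many input streams up to position $m$). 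The pullback $B \mono \omega \times X$ of $\bot$ along $\Psi$ is $\Pi^0_1$, and its sections $B_n \mono X$ form a uniformly $\Pi^0_1$ sequence with $|B_n| = \bigcup_{k \leq n} |A_k|$ and $|B_n| \subseteq |B_m|$ for $n \leq m$, as required.

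Next I verify $A \equiv \biguplus_n B_n$. For the forward inclusion, an $A$-name $\langle n, p\rangle$ of $x$ consists of a natural number $n$ and an $A_n$-name $p$, which by regularity of $A_n$ is nothing but an $X$-name of $x$; since $|A_n| \subseteq |B_n|$ and $B_n$ is likewise regular, the same pair $\langle n, p\rangle$ is a valid $\biguplus_m B_m$-name. For the reverse inclusion, given a $\biguplus_m B_m$-name $\langle n, q\rangle$ of $x$, so that $q$ is an $X$-name and $x \in B_n$, I produce a $\biguplus_m A_m$-name $\langle k^*, q\rangle$ where $k^*$ is the unique index in $\{0,\dots,n\}$ with $x \in A_{k^*}$. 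To compute $k^*$, I apply the tracker of $\Phi$ to $(k,q)$ for each $k \leq n$, obtaining $\mathbb{S}$-names $p_0,\dots,p_n$; by disjointness exactly $n$ of them are names of $\top$ (one for each $k \neq k^*$), while $p_{k^*}$ is a name of $\bot$. I dovetail the searches for a $\top$-witness in each $p_k$ and wait until $n$ of the $n+1$ searches have succeeded; the unique surviving index is $k^*$, and the output $\langle k^*, q\rangle$ is the desired $A$-name.

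The main obstacle is Step (4), the reverse inclusion, and specifically the fact that $k^*$ must be produced as an honest natural number rather than as some $\mathbb{S}$-like limit. This is exactly where disjointness pays off: it guarantees that precisely $n$ of the $n+1$ parallel semi-decisions terminate, giving an effective finite-time stopping rule for the dovetailing. Without disjointness, several indices $k$ could satisfy $\Phi(k,x) = \bot$ simultaneously, and one would have no realizable way to single out a canonical witness, which is consistent with the fact (implicit in the subsequent sections) that the increasing witness property is strictly stronger than being $\Sigma^0_2$ in general.
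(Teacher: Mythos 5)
Your proof is correct and follows essentially the same route as the paper: you take $B_n$ to be the $\Pi^0_1$ subobject with underlying set $\bigcup_{k\leq n}|A_k|$, and in the reverse inclusion you use disjointness plus continuity of the co-characteristic trackers to wait until all but one of the indices $k\leq n$ is refuted, recovering the unique witness $k^*$ and then its $A_{k^*}$-name by regularity. The only cosmetic difference is that you build $B_n$ as a pullback of $\bot$ along a finite meet of Sierpi\'nski maps, whereas the paper takes the subspace with that underlying set and checks $\Pi^0_1$-ness directly.
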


\begin{proof}
Assume $A\equiv\biguplus_{n\in\om}A_n$, and consider $A_m'=\biguplus_{k\leq m}A_k$.
Obviously, $m\leq n$ implies $|A_m'|\subseteq |A_n'|$.
One can see $A\equiv \biguplus_{m\in\om}A_m'$.
This is because, if $(n,p)$ is a name of $x\in \biguplus_{n\in\om}A_n$ then $(n,n,p)$ is a name of $x\in \biguplus_{m}\biguplus_{k\leq m}A_k$, and if $(m,k,p)$ is a name of $x\in \biguplus_{m}\biguplus_{k\leq m}A_k$ then $(k,p)$ is a name of $x\in \biguplus_{n\in\om}A_n$.

Now note that, in general, even if $A_n$ is ${\Pi}^0_1$ for each $n$, $A_m'$ is not necessarily ${\Pi}^0_1$.
However, the unique witness property solves this problem.
In this case, we claim that $A_m'$ is equivalent to a subspace $B_m$ of $X$ whose underlying set is $|A_m'|$.
Clearly, $A_m'\subseteq B_m$.
Given a name $p$ of $x\in B_m$, by the unique witness property, there exists a unique $k\leq m$ such that $x\in A_k$.
Now wait for $x\not\in A_n$ to be recognized for all $n\leq m$ except one $k$.
By continuity of trackers of co-characteristic functions of $A_n$'s, one can recognize this after reading a finite initial segment on $p$.
Then we must have $x\in A_k$.
Now, as $p$ is an $X$-name of $x$, by regularity, one can recover its $A_k$-name $p_k$.
Then $(k,p_k)$ is a name of $x\in A_m'$.

It remains to check that $B_m$ is a ${\Pi}^0_1$ subobject of $X$.
This is because $p$ is not a name of an element of $B_m$ iff $p_k$ is not a name of an element of $A_k$ for any $k\leq m$, which is recognizable.
\end{proof}

As an argument for modifying a given formula, let us introduce Veldman's notion of ``perhaps'' \cite{Vel05,Vel09}.
Given a $\Sigma^0_2$ formula $\psi\equiv\exists a\forall b\theta(a,b,x)$, consider the following:
\[{\sf Perhaps}(\psi):=\eval{x\in\om^\om:\exists a[\forall b\;(\neg\theta(a,b,x)\to\exists c\forall d\theta(c,d,x)]}\]

Note that the above has the same many-one degree as $\eval{x:\exists a[\eta(a,x)\to\exists b\neg\eta(b,x)]}$, where $\eta(a,x)\equiv\exists z\neg\theta(a,z,x)$.
Veldman \cite[Theorem 3.8]{Vel05} proved that ${\sf Perhaps}({\sf Fin})$ is $\Sigma^0_4$ but not $\Pi^0_4$, and in particular, it jumps out of $\Sigma^0_2$.
Here, we consider its totalized version that falls into the framework of $\Sigma^0_2$.
\[{\sf Half}(\psi):=\eval{x\in\om^\om:\exists a,b[\eta(a,x)\to\neg\eta(b,x)]}\]

To be more explicit, for a $\Sigma^0_2$ formula $\psi\equiv\exists a\forall z\theta(a,z,x)$, the subobject ${\sf Half}(\psi)\mono\om^\om$ is given as follows:
\begin{itemize}
\item The underlying set is $\{x\in\om^\om:\psi(x)\}$.
\item $(a,b,x)$ is a name of $x\in{\sf Half}(\psi)$ iff either $\forall z\theta(a,z,x)$ or $\forall z\theta(b,z,x)$ holds.
\end{itemize}

\begin{definition}
A subobject $A\mono\om^\om$ is {\em half $\Sigma^0_2$-hard} if ${\sf Half}(\psi)\leq_{\sf m}A$ for any $\Sigma^0_2$ formula $\psi$.
\end{definition}

Later we will show that no half $\Sigma^0_2$-hard subobject has the increasing witness property.

\subsection{Examples}
We give some natural examples of witnessed sets whose underlying sets are classically $\Sigma^0_2$-complete.
The examples listed below seem to fall into four groups.

\subsubsection{Unique witness property}
The first group consists of examples of the ``unique witness'' type.


\begin{example}
A binary relation $R\subseteq\om\times\om$ is coded by its characteristic function $\chi_R\in 2^{\om\times\om}$.
Let $(P_x,\leq_x)\in{\sf PO}$ denote the partial order coded by $x\in\om^\om$, where $P_x=\{a\in\om:a\leq_xa\}$.
In this way, the space ${\sf PO}$ of partial orders on a subset of $\om$ can be introduced as a $\Pi^0_1$ subspace of $2^{\om\times\om}$.
Consider the following subobject ${\sf PO}_{\sf top}$ of $\om^\om$:
\begin{itemize}
\item The underlying set is the set of all partial orders having greatest elements; that is, $|{\sf PO}_{\sf top}|=\{x\in\om^\om:(\exists a\in P_x)(\forall b\in P_x)\;b\leq_xa\}$.
\item $(a,p)$ is a name of $x\in{\sf PO}_{\sf top}$ iff $p=x$ and $a$ is the greatest element in $(P_x,\leq_x)$.
\end{itemize}
\end{example}

Since the greatest element is unique if it exists, it is obvious that it has the unique witness property.

\begin{prop}
${\sf Fin}\equiv_{\sf m}{\sf PO}_{\sf top}$.
\end{prop}

\begin{proof}
${\sf PO}_{\sf top}\leq_{\sf m}{\sf Fin}$:
Given $P=P_x$, consider $P[s]=\{p\leq s:p\leq_Pp\}$.
If $\max P[s]=\max P[s+1]$ then put $\varphi(x)(s)=0$; otherwise $\varphi(x)(s)=1$.
If $p$ is the $\leq_P$-greatest element, then we have $p=\max P[p]=\max P$, so $\varphi(x)(s)=0$ for any $s\geq p$.
Hence, $p$ is a witness for $\varphi(x)\in{\sf Fin}$.
Conversely, if $s$ is a witness for $\varphi(x)\in{\sf Fin}$, then $\varphi(x)(t)=0$ for any $t\geq s$.
This means $\max P[s]=\max P$, so compute the $\leq_P$-greatest element $p$ among the finite set $P[s]$.
Then $p$ is a witness for $x\in{\sf PO}_{\sf top}$.
%

${\sf Fin}\leq_{\sf m}{\sf PO}_{\sf top}$:
Given $x\in\om^\om$, we construct a poset $P=P_{\varphi(x)}$.
Whenever $x(s)\not=0$ happens, we add a new top element to $P$; that is, if $x(s)\not=0$, put $t<_Ps\leq_Ps$ for any $t<s$; otherwise we do not add anything to $P$.
If $s$ is a witness for $x\in{\sf Fin}$ then search for the $\leq_P$-greatest element $p$ among the finite set $\{t\in P:t\leq s\}$.
Since nothing is added to $P$ after stage $s$, this $p$ remains the $\leq_P$-greatest element in $P$, so $p$ is a witness for $\varphi(x)\in {\sf PO}_{\sf top}$.
Conversely, if $p$ is the $\leq_P$-greatest element in $P$ then there is no $s>p$ such that $x(s)\not=0$.
Otherwise, we add a new top element $s>_Pp$, which is impossible.
Thus, $p$ is a witness for $x\in{\sf Fin}$.
\end{proof}

Of course, partial orders may be changed to linear orders, and top elements may be changed to bottom elements.
This means that the decision of boundedness ${\sf BddPO}$ of posets is also ${\sf m}$-equivalent to ${\sf Fin}$.
Here are some other examples.

\begin{example}\label{exa:sigma-02-complete1}
Consider the following subobject ${\sf Conv}$ of $\om^\om$:
\begin{itemize}
\item The underlying set is the set of all convergent sequences on $\om$; that is, $|{\sf Conv}|=\{x\in\om^\om:\lim_{n\to\infty}x(n)\mbox{ exists}\}$.
\item $(s,p)$ is a name of $x\in{\sf Conv}$ iff $p=x$ and $x(t)=x(s)$ for all $t\geq s$.
\end{itemize}
\end{example}

\begin{example}\label{exa:sigma-02-complete2}
We consider a decision problem for real numbers, where a real number is presented by accuracy-guaranteed rational approximations, so we deal with a decision problem for such rational approximations.
A pre-real is a rational sequence $(q_n)_{n\in\om}$ such that $|q_n-q_m|\leq 2^{-n}$ for any $m\geq n$.
Here, rational numbers are coded by natural numbers in an obvious manner, so we can consider $\mathbb{R}_{\sf pre}$ to be a $\Pi^0_1$ subobject of $\om^\om$.
In the following, we often identify a pre-real $(q_n)_{n\in\om}$ with the real number $\lim_{n\to\infty}q_n$.
If the limit is rational, we say that it is a pre-rational.
Consider the following subobject $\mathbb{Q}_{\sf pre}$ of $\mathbb{R}_{\sf pre}$:
\begin{itemize}
\item The underlying set is the set of all pre-rationals.
\item $(m,n,p)$ is a name of $x\in\mathbb{Q}_{\sf pre}$ iff $p$ is an $\mathbb{R}_{\sf pre}$-name of $x$, and $x=\frac{k}{m}$ where $m\in\om\setminus\{0\}$, and $n$ is a $\mathbb{Z}$-name of $k$.
\end{itemize}
\end{example}

Here, for example, although there can be more than one witness $\frac{k}{m}$ for $x\in\mathbb{Q}_{\sf pre}$, if restricted to only irreducible fractions, the witness for $x\in\mathbb{Q}_{\sf pre}$ is unique.
Thus, $\mathbb{Q}_{\sf pre}\mono\mathbb{R}_{\sf pre}$ can be described by a formula with the unique witness property without changing the intrinsic complexity.
In this sense, $\mathbb{Q}_{\sf pre}\mono\mathbb{R}_{\sf pre}$ is an object of the unique witness type.

The case of ${\sf Conv}$ is a little more difficult.
At first glance, ${\sf Conv}$ appears to be of the increasing type, but closer analysis reveals that it is in fact of the unique witness type.
We will discuss this in detail later, but first let us see the following:

\begin{obs}
${\sf Fin}\equiv_{\sf m}{\sf Conv}\equiv_{\sf m}\mathbb{Q}_{\sf pre}$.
\end{obs}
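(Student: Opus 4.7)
The plan is to establish the cycle ${\sf Conv}\leq_m{\sf Fin}\leq_m\mathbb{Q}_{\sf pre}\leq_m{\sf Conv}$. Each step requires, by Definition \ref{def:represented-many-one-reducibility}, a morphism $\varphi$ on underlying sets together with realizers $r_-$ (input witness $\to$ output witness) and $r_+$ (output witness plus $X$-name $\to$ input witness). The first step is essentially trivial: take $\varphi(x)(n)=|x(n+1)-x(n)|$; any witness $s$ transfers unchanged in both directions since $x(t)=x(s)$ on $t\geq s$ iff $\varphi(x)(t)=0$ on $t\geq s$.

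For ${\sf Fin}\leq_m\mathbb{Q}_{\sf pre}$ I would use a Liouville-style encoding. The naive $\sum x(k)2^{-k}$ fails (the constant sequence $1$ already sums to $2$), so I first truncate $\psi(x)(k)=\min(x(k),1)\in\{0,1\}$ and set $\varphi(x)=\sum_k\psi(x)(k)\cdot 2^{-k!}$. Partial sums satisfy $|q_n-q_{n'}|\leq 2\cdot 2^{-(n+1)!}\leq 2^{-n}$, making $\varphi(x)$ a pre-real, and a Liouville-type argument shows $\varphi(x)$ is rational iff $\psi(x)$ is eventually $0$ iff $x\in{\sf Fin}$. From a ${\sf Fin}$-witness $s$ the sum collapses to $\sum_{k<s}\psi(x)(k)\cdot 2^{-k!}$, which gives a $\mathbb{Q}_{\sf pre}$-witness with denominator $2^{(s-1)!}$. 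For $r_+$, the key estimate is that if $\varphi(x)=k/m$ and $\psi(x)(j)\neq 0$ for some $j>n$, then $\varphi(x)-q_n$ is simultaneously at least $1/(m\cdot 2^{n!})$ (as a nonzero rational with that denominator) and at most $2\cdot 2^{-(n+1)!}$; these are incompatible once $2^{n\cdot n!}>2m$, so one can compute from $m$ alone an $n$ past which $x$ must vanish.

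For $\mathbb{Q}_{\sf pre}\leq_m{\sf Conv}$, fix a computable enumeration $(k_i,m_i)_{i\in\om}$ of all pairs with $m_i\geq 1$ and define $\varphi(p)(n)$ to be the least $i\leq n$ with $|q_n-k_i/m_i|\leq 2^{-n}$, or $n+1$ if none exist. If $p$ is rational with first index $i^\ast$, then $k_{i^\ast}/m_{i^\ast}$ is always compatible while each earlier $k_j/m_j\neq p$ is ruled out once $2^{-n+1}$ drops below the gap $|p-k_j/m_j|>0$, so $\varphi(p)$ stabilizes at $i^\ast$; if $p$ is irrational, the minimum index is forced unbounded. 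A $\mathbb{Q}_{\sf pre}$-witness supplies $i^\ast$ and a stabilization time computed from the finite minimum of the gaps $|p-k_j/m_j|$ over $j<i^\ast$, and from a ${\sf Conv}$-witness $s$ one simply reads off $\varphi(p)(s)=i^\ast$ and decodes the fraction. The main difficulty is keeping the Liouville estimate uniform in $m$ so that $r_+$ depends only on the supplied denominator and not on $x$; this is exactly what turns the reduction into a genuine many-one (not merely demi-many-one) one.
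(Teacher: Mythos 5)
Your proposal is correct and follows essentially the same route as the paper: the same cycle ${\sf Conv}\leq_m{\sf Fin}\leq_m\mathbb{Q}_{\sf pre}\leq_m{\sf Conv}$, with the support of $x$ encoded as a sum of reciprocal powers of two with fast-growing exponents (you use $2^{-k!}$, the paper uses $2^{-n^2}$) and a stabilizing guess of the rational for the last leg (you guess its index in an enumeration of fractions, the paper guesses its denominator after normalizing into $[-1,1]$). These differences are cosmetic; the witness-transfer arguments $r_-$ and $r_+$ work just as in the paper's proof.
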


\begin{proof}
${\sf Conv}\leq_{\sf m}{\sf Fin}$:
Given $x\in\om^\om$, define $\varphi(x)\in\om^\om$ as follows:
If $x(s+1)\not=x(s)$, put $\varphi(x)(s)=1$; otherwise $\varphi(x)(s)=0$.
Note that $s$ is a witness for $x\in{\sf Conv}$ iff $x(t+1)=x(t)$ for any $t\geq s$ iff $\varphi(x)(t)=0$ for any $t\geq s$ iff $s$ is a witness for $\varphi(x)\in{\sf Fin}$.
Thus, using $r_-(s,x)=s$ and $r_+(s,x)=s$ work.

${\sf Fin}\leq_{\sf m}\mathbb{Q}_{\sf pre}$:
For $f(n)=n^2$, the sum $\sum_{n=0}^\infty 2^{-f(n)}$ is irrational.
Given $x\in\om^\om$, consider $\varphi(x):=\sum_{x(n)\not=0}2^{-f(n)}$.
Note that $x\in{\sf Fin}$ iff the binary expansion of $\varphi(x)$ is eventually periodic iff $\varphi(x)$ is rational.
Given a witness $s$ for $x\in{\sf Fin}$, the value $\varphi(x)$ can be written as $\sum_{n\leq s}2^{f(s)-f(n)}/2^{f(s)}$.
The denominator and numerator are both natural numbers, so the pair is a witness for $\varphi(x)\in\mathbb{Q}_{\sf pre}$.
Conversely, if $\varphi(x)$ is of the form $a/b$, the denominator $b$ must be a multiple of some $2^{f(s)}$.
If $x(t)\not=0$ for some $t>s$, then $2^{-f(t)}$ is added to $\varphi(x)$, which makes it impossible to express $\varphi(x)$ as a multiple of $2^{-f(s)}$.
Thus, $s+1$ is a witness for $x\in{\sf Fin}$.

$\mathbb{Q}_{\sf pre}\leq_{\sf m}{\sf Conv}$:
Given $x\in\mathbb{R}_{\sf pre}$, we first get its rational approximation with accuracy $1$, from which we can compute a positive integer $b$ that is an upper bound of $|x|$, so we get $y=x/b\in[-1,1]$.
We define $\varphi(x)(s)$ as the prediction of the denominator of $y$ at stage $s$; that is, the current prediction is $n$ iff the prediction that $y=k/n$ for some $-n\leq k\leq n$ is not refuted at that stage.
Here, the prediction that $y=k/n$ is refuted at stage $s$ means that $|q_s-k/n|>2^{-s}$ is confirmed by looking at the information on a rational approximation $q_s$ of $y$ with accuracy $2^{-s}$.
If the prediction $n$ is refuted at $s$, change the prediction at stage $s+1$ to $\varphi(x)(s+1)=n+1$.

If $x$ is a rational number of the form $a/b$, then rewrite this into irreducible fraction $k/n$.
One can see that the denominator $n$ is equal to $\lim_{s\to\infty}\varphi(x)(s)$.
We search for the first $s$ such that $\varphi(x)(s)=n$.
This $s$ is a witness for $\varphi(x)\in{\sf Conv}$ since $\varphi(x)$ is monotone.
Conversely, if $s$ is a witness for $\varphi(x)\in{\sf Conv}$, then compute $\varphi(x)(s)=n$.
By our construction, $x$ must be of the form $k/n$ for some $-n\leq k\leq n$.
Looking at an approximation of $x$ with accuracy $2^{-n}$, the equation $x=k/n$ is refuted except for one $k$, so the last remaining $k$ is the numerator of $x$.
\end{proof}

The reason why we introduced the notion of pre-real here is in the proof of $\mathbb{Q}_{\sf pre}\leq_{\sf m}{\sf Conv}$.
By our definition, a many-one reduction $\varphi$ must be well-defined on represented spaces; however, observe that there are few morphisms $\mathbb{R}\to\om^\om$, while there are many morphisms $\mathbb{R}_{\sf pre}\to\om^\om$.
In the context of Wadge reducibility (topological many-one reducibility; Definition \ref{def:Wadge-reducibility}), the difference between the structures of $\mathbb{R}$ and $\om^\om$ is examined in depth \cite{Sch18}.
If we change the definition to a form that allows a many-one reduction $\varphi$ on the name space ${\tt Code}$, as in Pequignot-style Wadge reducibility \cite{Peq15} or Weihrauch reducibility \cite{pauly-handbook}, there is no need to introduce the notion of pre-real.

%
%
%

\subsubsection{Increasing witness property}
Next, let us discuss examples that are of the ``increasing'' type.

\begin{example}\label{exa:sigma-02-complete4}
Consider the following subobject ${\sf BddSeq}_\om$ of $\om^\om$:
\begin{itemize}
\item The underlying set is the set of all bounded sequences on $\om$; that is, $|{\sf BddSeq}_\om|=\{x\in\om^\om:(\exists b)(\forall n)\;x(n)\leq b\}$.
\item $(b,p)$ is a name of $x\in{\sf BddSeq}_\om$ iff $p=x$ and $x(n)\leq b$ for all $n$.
\end{itemize}

${\sf BddSeq}_\mathbb{Q}\mono\mathbb{Q}^\om$ and ${\sf BddSeq}_\mathbb{R}\mono\mathbb{R}^\om$ can be defined in a similar manner.
\end{example}

It is not difficult to verify that these examples can be written as increasing sequences of $\Pi^0_1$ sets. 

\begin{obs}
${\sf BddSeq}_\om\equiv_{\sf m}{\sf BddSeq}_\mathbb{Q}\equiv_{\sf m}{\sf BddSeq}_\mathbb{R}$.
\end{obs}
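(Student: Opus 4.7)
The plan is to establish the cycle
\[{\sf BddSeq}_\om\leq_m{\sf BddSeq}_\mathbb{Q}\leq_m{\sf BddSeq}_\mathbb{R}\leq_m{\sf BddSeq}_\om,\]
so that the three witnessed sets collapse to a single many-one degree. The two ``upward'' reductions are supposed to be essentially trivial and use the canonical computable embeddings $\iota_1\colon\om\to\mathbb{Q}$ and $\iota_2\colon\mathbb{Q}\to\mathbb{R}_{\sf pre}$. For ${\sf BddSeq}_\om\leq_m{\sf BddSeq}_\mathbb{Q}$ one sets $\varphi(x)=\iota_1\circ x$; an ${\sf BddSeq}_\om$-witness $b\in\om$ is transported to the same value viewed as a rational (this is $r_-$), while from an outer rational bound $b'$ for $\iota_1\circ x$ one recovers the $\om$-bound $\lceil b'\rceil$ for $x$ (this is $r_+$). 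The step ${\sf BddSeq}_\mathbb{Q}\leq_m{\sf BddSeq}_\mathbb{R}$ is entirely analogous: $\varphi(x)=\iota_2\circ x$, $r_-$ passes a rational bound to the same pre-real, and $r_+$ extracts a rational upper bound from a pre-real bound by taking a rational approximation with accuracy $1$ and adding $2$.

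The only genuinely content-bearing reduction is ${\sf BddSeq}_\mathbb{R}\leq_m{\sf BddSeq}_\om$. The idea is to replace each pre-real $x(n)$, given by rational approximations $(q_m^{(n)})_m$ with $|q_m^{(n)}-q_{m'}^{(n)}|\leq 2^{-m}$ for $m'\geq m$, by the natural number
\[\varphi(x)(n)=\max\{0,\lceil q_0^{(n)}\rceil+2\}.\]
The key point is that $|x(n)-q_0^{(n)}|\leq 1$, which gives the pointwise domination $x(n)\leq\varphi(x)(n)$ and, conversely, $\varphi(x)(n)\leq q_0^{(n)}+3$. Consequently $x\in|{\sf BddSeq}_\mathbb{R}|$ iff $\varphi(x)\in|{\sf BddSeq}_\om|$: domination gives one direction, while a real bound $b$ on $x$ forces $q_0^{(n)}\leq b+1$ and hence $\varphi(x)(n)\leq b+3$. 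For the inner reduction $r_-$, a ${\sf BddSeq}_\mathbb{R}$-name supplies a witness $b$; by reading a sufficiently accurate rational approximation of $b$, we output the ${\sf BddSeq}_\om$-name $\langle\lceil b\rceil+3,\varphi(x)\rangle$. For the outer reduction $r_+$, a ${\sf BddSeq}_\om$-witness $B$ for $\varphi(x)$ is transferred back to $x$ by observing $x(n)\leq\varphi(x)(n)\leq B$, so $B$ (as a real) is itself a witness for $x\in{\sf BddSeq}_\mathbb{R}$; the $\mathbb{R}^\om$-part of the output is just the original name of $x$.

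The main technical care needed is in checking that the maps $\varphi$, $r_-$, $r_+$ really descend to morphisms of represented spaces—that is, that they depend only on names, not on points. For $\varphi$ this amounts to the fact that the zeroth rational approximation is a realizable function on $\mathbb{R}_{\sf pre}$; for $r_-$ it is the realizable translation ``pre-real bound $b\leadsto \lceil b\rceil+3\in\om$'' which, by the pre-real axiom, requires only finite information. Once these are observed, together with the routine forward inclusions of the previous paragraph, the three reductions compose to give the stated equivalence. The only possible obstacle is a notational one, namely clarifying what kind of object plays the role of the ``witness $b$'' in the definitions of ${\sf BddSeq}_\mathbb{Q}$ and ${\sf BddSeq}_\mathbb{R}$; once this is fixed, the ceiling/approximation trick handles every case uniformly.
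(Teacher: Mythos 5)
Your proposal is correct and follows essentially the same route as the paper: the upward reductions are the canonical inclusions $\om\subseteq\mathbb{Q}\subseteq\mathbb{R}$ with trivial inner reductions, and the downward direction rests on converting bounds via a coarse (accuracy $2^{-1}$ or $2^{0}$) rational approximation and a ceiling, which is exactly the paper's two-line argument. You in fact spell out more than the paper does — the explicit point map $\varphi(x)(n)=\max\{0,\lceil q_0^{(n)}\rceil+2\}$ on $\mathbb{R}_{\sf pre}^\om$ realizing ${\sf BddSeq}_\mathbb{R}\leq_m{\sf BddSeq}_\om$, and the observation that it must act on pre-real names rather than points — and your slightly off-by-one constants are harmless.
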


\begin{proof}
A natural number is clearly a rational number, which is clearly a real number.
Given an upper bound $b$ of a sequence $(x_n)_{n\in\om}$ of real numbers, first extract a rational approximation $m/n$ of $b$ with precision $2^{-1}$.
Then the natural number $|m|+1$ is an upper bound of $(x_n)_{n\in\om}$.
\end{proof}

An example other than decision problems on sequences is the decision that a partial order has a finite height/width.
Here, the {\em height} ({\em width}, resp.)~of a poset $P$ is the supremum of the cardinality of chains (antichains, resp.)~in $P$.

If one attempts to describe finiteness of the size of something by a $\Sigma^0_2$-formula, it is natural to write it as the existence of a finite upper bound of the size.
Therefore, it is appropriate to consider the witness of this formula as the value of an upper bound of the size.

\begin{example}
Recall that $P_x$ is the poset coded by $x\in\om^\om$.
Consider the following subobjects ${\sf FinHeight}$ and ${\sf FinWidth}$ of $\om^\om$:
\begin{itemize}
\item $|{\sf FinHeight}|=\{x\in\om^\om:\mbox{the height of $P_x$ is finite}\}$.
\item $|{\sf FinWidth}|=\{x\in\om^\om:\mbox{the width of $P_x$ is finite}\}$.
\item $(b,p)$ is a name of $x\in{\sf FinHeight}$ iff $p=x$ and the height of $P_x$ is at most $b$.
\item $(b,p)$ is a name of $x\in{\sf FinWidth}$ iff $p=x$ and the width of $P_x$ is at most $b$.
\end{itemize}
\end{example}

\begin{obs}
${\sf BddSeq}_\om\equiv_{\sf m}{\sf FinHeight}\equiv_{\sf m}{\sf FinWidth}$.
\end{obs}

\begin{proof}
${\sf FinHeight},{\sf FinWidth}\leq_{\sf m}{\sf BddSeq}_\om$:
Given $P=P_x$, consider the cardinality $\varphi(x)(n)$ of a maximal chain or antichain in the finite set $P[n]=\{p\leq n:p\leq_Pp\}$.

${\sf BddSeq}_\om\leq_{\sf m}{\sf FinHeight}$:
Given $x\in\om^\om$, we construct a poset $P=P_{\varphi(x)}$.
Whenever a previously unseen large value appears in $x$, we add a new top element to $P$.
To be precise, let the underlying set of $P$ be $\{\pair{n,t}:x(t)\geq n\mbox{ and }\forall s<t.\;x(s)<n\}$.
Here, $n<m$ implies $\pair{n,t}<_P\pair{m,s}$ for any $s,t$.
Note that for each $n$, $P$ has at most one element of the form $\pair{n,t}$.

If $b$ is a witness for $x\in{\sf BddSeq}_\om$, then $x(t)\leq n$ for any $t$.
Thus, $\pair{m,s}\in P$ implies $m\leq n$.
Then $P$ can contain at most $n+1$ elements as noted above, so the height of $P$ is at most $n+1$.
Therefore, $n+1$ is a witness for $\varphi(x)\in{\sf FinHeight}$.
Conversely, if $n$ is a witness for $\varphi(x)\in{\sf FinHeight}$ then we claim that $n$ is a witness for $x\in{\sf BddSeq}_\om$.
Otherwise, there exists $t$ such that $x(t)>n$.
Hence, for any $m\geq n$, let $t(m)$ be the least number such that $x(t(m))\geq n$.
By definition, we have $\pair{m,t(m)}\in P$.
In particular, $(a^m_{t(m)})_{m\leq n}$ is a chain of length $n+1$; thus the height of $P$ is at least $n+1$.
This contradicts our assumption, so $n$ is a witness for $x\in{\sf BddSeq}_\om$.

${\sf BddSeq}_\om\leq_{\sf m}{\sf FinWidth}$:
Given $x\in\om^\om$, we construct a poset $P=P_{\varphi(x)}$.
The underlying set is the same as above.
Now we declare that all distinct elements in $P$ are incomparable.
The rest of the proof is exactly the same as above.
\end{proof}

\subsubsection{Half $\Sigma^0_2$-hard}
Let us discuss $\Sigma^0_2$ subobjects that are half $\Sigma^0_2$-hard.
Obviously, there exists a subobject of $\om^\om$ which is complete w.r.t.~the ones of the form ${\sf Half}(\psi)\mono\om^\om$ for some $\Sigma^0_2$ formula $\psi$.
To see this, just take a universal $\Sigma^0_2$ formula $\psi$.

\begin{example}
Consider the following subobject ${\sf HalfTruth}_{\Sigma^0_2}\mono\om^\om$:
\begin{itemize}
\item The underlying set is $\{(x_n)_{n\in\om}\in(\om^\om)^\om:(\exists n\in\om)\;x_n=0^\infty\}$.
\item $(n,m,p)$ is a name of $x=(x_n)_{n\in\om}\in{\sf DisConn}$ iff $p=x$ and either $x_n=0^\infty$ or $x_m=0^\infty$ holds.
\end{itemize}
\end{example}

\begin{obs}
${\sf HalfTruth}_{\Sigma^0_2}$ is half $\Sigma^0_2$-hard.
\end{obs}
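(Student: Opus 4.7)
The plan is to construct, for an arbitrary $\Sigma^0_2$ formula $\psi \equiv \exists a \forall z.\theta(a,z,x)$ with $\theta$ decidable, a direct many-one reduction ${\sf Half}(\psi) \leq_m {\sf HalfTruth}_{\Sigma^0_2}$ whose witness translation is essentially the identity on pairs.

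The reduction morphism $\varphi \colon \om^\om \to (\om^\om)^\om$ is defined by $\varphi(x) = (y_a)_{a\in\om}$, where the $a$th coordinate is given by $y_a(z) = 0$ if $\theta(a,z,x)$ holds and $y_a(z) = 1$ otherwise. Since $\theta$ is decidable, this $\varphi$ is a genuine morphism of represented spaces (tracked in the obvious way by evaluating the decision procedure). By construction, $y_a = 0^\infty$ if and only if $\forall z.\theta(a,z,x)$, i.e.\ if and only if $a$ is a witness to $\psi(x)$. Therefore $\psi(x)$ holds iff some $y_a = 0^\infty$ iff $\varphi(x) \in |{\sf HalfTruth}_{\Sigma^0_2}|$, giving condition (1) of Definition \ref{def:represented-many-one-reducibility}.

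For the inner realizer $r_-$: a name of $x \in {\sf Half}(\psi)$ has the form $\pair{a,b,x}$ where either $\forall z.\theta(a,z,x)$ or $\forall z.\theta(b,z,x)$. Under $\varphi$ this means that either $y_a = 0^\infty$ or $y_b = 0^\infty$, so the triple $\pair{a,b,\varphi(x)}$ is (by definition) a name of $\varphi(x) \in {\sf HalfTruth}_{\Sigma^0_2}$. Thus $r_-$ simply keeps the two indices and applies a tracker of $\varphi$ on the third component. For the outer realizer $r_+$: given an $\om^\om$-name $x$ together with a name $\pair{n,m,\varphi(x)}$ of $\varphi(x) \in {\sf HalfTruth}_{\Sigma^0_2}$, either $y_n = 0^\infty$ or $y_m = 0^\infty$, which by the definition of $\varphi$ translates back to $\forall z.\theta(n,z,x) \lor \forall z.\theta(m,z,x)$; hence $\pair{n,m,x}$ is a name of $x \in {\sf Half}(\psi)$, and $r_+$ just forgets $\varphi(x)$ and reassembles the triple.

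There is no real obstacle here: the construction exploits the fact that ${\sf HalfTruth}_{\Sigma^0_2}$ was designed to be the universal witnessed disjunction of two $\Pi^0_1$ predicates parametrized over $\om$, which is precisely what the witness structure of ${\sf Half}(\psi)$ records. The only point requiring mild care is the decidability of $\theta$, which is needed so that each coordinate $y_a$ of $\varphi(x)$ is a genuine element of $\om^\om$ computable uniformly in $x$ and $a$; this is automatic from the hypothesis that $\psi$ is a $\Sigma^0_2$ formula in the sense of the arithmetical hierarchy.
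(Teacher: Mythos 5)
Your proposal is correct and is essentially the paper's own (implicit) argument: the paper treats this as an observation justified by taking a universal $\Sigma^0_2$ formula, and your coordinatewise reduction $x\mapsto(y_a)_{a\in\om}$ with $y_a=0^\infty$ iff $\forall z\,\theta(a,z,x)$, keeping the witness pair unchanged in both directions, is exactly that universality argument written out explicitly.
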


However, this is just a meta-mathematical example of a half $\Sigma^0_2$-complete subobject.
A natural example of a half $\Sigma^0_2$-hard subobject may be presented from graph theory.

There are two ways to formalize the notion of a graph.
One is to introduce edges as pairs of vertices.
That is, a directed graph is a pair $G=(V,E)$ satisfying $E\subseteq V^2$.
We call this a {\em subset presentation} of a directed graph.

The other is to treat edges as data with specified source and target vertices.
In other words, a directed multigraph is a map $\langle d,c\rangle\colon E\to V^2$, where $d(e)$ denotes the source vertex (tail) of edge $e$ and $c(e)$ denotes the target vertex (head) of edge $e$.
This is also a very standard way of presenting a directed multigraph, which we call a {\em function presentation} of a graph.
We sometime use the symbol $u\edge{e}v$ to denote an edge $e$ with $d(e)=u$ and $c(e)=v$.

From here on, we only deal with undirected graphs.
In the case of a presentation of an undirected graph, we consider $E\subseteq[V]^2$ and $\gamma\colon E\to[V]^2$.
Here, $[V]^2=\{(u,v)\in V^2:u<v\}$ for $V\subseteq\om$, and each $(u,v)\in[V]^2$ is often written as $\{u,v\}$ or $\{v,u\}$; that is, we do not distinguish between $\{u,v\}$ and $\{v,u\}$ as usual.

Note that subset and function presentations (even restricted to undirected simple graphs) are not computability-theoretically equivalent in a certain sense, but they are equivalent as far as the following disconnectedness problem is concerned.

\begin{example}
A subset presentation of a graph $G=(V,E)$ with $V\subseteq\om$ and $E\subseteq[V]^2$ is coded as their characteristic functions $\chi_V\in 2^\om$ and $\chi_E\in 2^{(\om^2)}\simeq 2^\om$.
Let $G_x=(V_x,E_x)$ be the subset presentation of the graph coded by $x$.
Consider the following subobject ${\sf DisConn}$ of $\om^\om$:
\begin{itemize}
\item The underlying set is the set of all subset presentations of disconnected graphs; that is, $x\in|{\sf DisConn}|$ iff not every two vertices are connected by a finite path in $G_x$.
\item $(a,b,p)$ is a name of $x\in{\sf DisConn}$ iff $p=x$ and $a,b\in V_x$ are not connected by any finite path in $G_x$.
\end{itemize}
\end{example}

\begin{example}
A function presentation of a graph $\gamma\colon E\to [V]^2$ with $V\subseteq\om$ and $E\subseteq\om$ is coded as the triple $(\gamma,\chi_V,\chi_E)$.
Let $\gamma_x\colon E_x\to[V_x]^2$ be the function presentation of the graph coded by $x$.
Consider the following subobject ${\sf DisConn}_{\sf fun}$ of $\om^\om$:
\begin{itemize}
\item The underlying set is the set of all function presentations of disconnected graphs; that is, $x\in|{\sf DisConn}_{\sf fun}|$ iff not every two vertices are connected by a finite path in $\gamma_x$.
\item $(a,b,p)$ is a name of $x\in{\sf DisConn}_{\sf fun}$ iff $p=x$ and $a,b\in V_x$ are not connected by any finite path in $\gamma_x$.
\end{itemize}
\end{example}

These are clearly $\Sigma^0_2$ subobjects of $\om^\om$.
A function presentation of a graph may appear, for example, in the context of group actions.

\begin{example}
An action $\alpha\colon G\times S\to S$ of a countable group $G\subseteq\om$ on a set $S\subseteq\om$ is coded via their characteristic functions, where a code of $G\subseteq\om$ also involves the operation $\ast\in \om^{\om\times\om}$ and the inverse $\circ^{-1}\in\om^\om$.
Let $(G_x,S_x,\alpha_x)$ denote the countable group action coded by $x\in\om^\om$.
Consider the following subobject ${\sf Orbit}_{\geq 2}$ of $\om^\om$:
\begin{itemize}
\item The underlying set is the set of all countable group actions on subsets of $\om$; that is, $x\in|{\sf Orbit}_{\geq 2}|$ iff $\alpha_x\colon G_x\times S_x\to S_x$ has at least two orbits.
\item $(a,b,p)$ is a name of $x\in{\sf DisConn}_{\sf fun}$ iff $p=x$ and $a,b\in S_x$ belong to different orbits; that is, there is no $g\in G_x$ such that $\alpha_x(g,a)=b$.
\end{itemize}

When a group action $\alpha$ is given, $\alpha(g,a)$ is often abbreviated as $g\cdot a$.
\end{example}

\begin{prop}
${\sf DisConn}\equiv_{\sf m}{\sf DisConn}_{\sf fun}\equiv_{\sf m}{\sf Orbit}_{\geq 2}$.
\end{prop}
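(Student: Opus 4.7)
The plan is to organize the three equivalences around a single combinatorial observation: in each case, the complexity is carried by one reachability relation (connected components for graphs and orbits for group actions), so a disconnecting pair $(a,b)$ on one side serves as a witness on the other. Consequently, in every reduction the functions $r_-$ and $r_+$ of Definition \ref{def:represented-many-one-reducibility} are essentially the identity on the witness component, and the work is concentrated in checking that the morphism $\varphi$ is tracked.

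For ${\sf DisConn}\equiv_m{\sf DisConn}_{\sf fun}$, the easy direction converts a subset presentation $(V,E)$ into a function presentation by taking the same $V$, edge set $E\subseteq\om$ (via the pairing $[V]^2\hookrightarrow\om$), and $\gamma$ the inclusion, which is tracked computably. In the reverse direction one must handle the fact that $\gamma[E]\subseteq[V]^2$ is only $\Sigma^0_1$ and not decidable; this is resolved by a subdivision trick. Replace each edge $e\in E$ by a fresh phantom vertex and two half-edges $\{v,e\}$ for $v\in\gamma(e)$, producing a subset-presented graph on $V^\ast=V\sqcup E$ whose edge characteristic function is decidable. Any $V^\ast$-path between two vertices of $V$ alternates between $V$ and $E$, and the $V$-projection is a $\gamma$-path, so the same pair $(a,b)\in V$ witnesses disconnection on either side.

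For ${\sf DisConn}_{\sf fun}\equiv_m{\sf Orbit}_{\geq 2}$, given $\gamma:E\to[V]^2$ let the group be the free group $F_E$, coded as reduced words over $E\cup E^{-1}$ with the standard computable multiplication and inversion, acting on $V$ by letting each generator $e$ act as the transposition swapping the endpoints of $\gamma(e)$. Since the image group in ${\rm Sym}(V)$ is generated by these transpositions, the $F_E$-orbit of a vertex coincides with its connected component in $\gamma$, and a witness $(a,b)$ transfers unchanged. Conversely, from $\alpha\colon G\times S\to S$ build the function-presented graph on $S$ with $E=\{(g,s)\in G\times S : g\cdot s\neq s\}$ and $\gamma(g,s)=\{s,g\cdot s\}$; a graph-path $s=s_0,\dots,s_n=t$ gives a product of generators taking $s$ to $t$, and conversely if $t=g\cdot s$ with $t\neq s$ then $(g,s)$ is a direct edge, so orbits and components coincide and the witness pair again transfers.

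The main obstacle is the bookkeeping in the free-group construction: one must produce, uniformly in a name of $\gamma$, a name of the countable group action, which requires packaging a computable encoding of reduced words, concatenation-with-reduction, letterwise inversion, and the evaluation $w\cdot v$ obtained by applying the transpositions in sequence (and writing down the characteristic function of $E\subseteq\om$, which is decidable since $F_E$ has decidable word problem). None of this is difficult, but it is where all the low-level verification lives; once these encodings are fixed, conditions (1)--(3) of Definition \ref{def:represented-many-one-reducibility} are immediate from the component-orbit correspondence established above.
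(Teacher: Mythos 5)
Your proposal follows essentially the same route as the paper: the inclusion map for ${\sf DisConn}\leq_m{\sf DisConn}_{\sf fun}$, an edge-subdivision (your phantom vertices on $V\sqcup E$ are the paper's midpoint vertices $2\langle u,v,a\rangle+1$) for the converse, the free group $F_E$ acting by transpositions of edge endpoints for ${\sf DisConn}_{\sf fun}\leq_m{\sf Orbit}_{\geq 2}$, and the action graph $\gamma(g,s)=\{s,g\cdot s\}$ for the converse; your restriction to $E=\{(g,s):g\cdot s\neq s\}$ is in fact a cleaner treatment of loops than the paper's.

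One caveat: in the subdivision direction ${\sf DisConn}_{\sf fun}\leq_m{\sf DisConn}$ your blanket claim that $r_+$ is essentially the identity on witnesses is not quite right. The $B$-witness handed to $r_+$ is an arbitrary disconnected pair in the subdivided graph, and it may involve a phantom vertex $e\in E$, which is not a vertex of the original graph. You must translate such a vertex to one of the endpoints of $\gamma(e)$; since that endpoint is adjacent to the phantom vertex, disconnection is preserved, so the fix is immediate, but it is a step the paper carries out explicitly and your "same pair on either side" phrasing skips.
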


\begin{proof}
${\sf DisConn}\leq_{\sf m}{\sf DisConn}_{\sf fun}$:
It is obvious since any subset $E\subseteq[V]^2$ can be thought of as an inclusion map $E\embed[V]^2$.

${\sf DisConn}_{\sf fun}\leq_{\sf m}{\sf DisConn}$:
Let a function presentation of a graph $\gamma\colon E\to[V]^2$ is given.
For each $v\in V$, put $2v\in V'$.
If $\gamma(a)=\{u,v\}$ then put $\{2u,2\langle u,v,a\rangle+1\},\{2\langle u,v,a\rangle+1,2v\}\in E'$.
Note that the graph $\phi(\gamma)=(V',E')$ is the result of adding one vertex to the midpoint of each edge of the graph $(V,E)$.
For the forward reduction, clearly $\{u,v\}$ is disconnected in $(V,E)$ iff $\{2u,2v\}$ is disconnected in $(V',E')$.
For the backward reduction, assume that $\{2u,2\langle v,w,a\rangle+1\}$ is disconnected in $(V',E')$.
By definition, $2v$ is adjacent to $2\langle v,w,a\rangle+1$, so $\{2u,2v\}$ must also be disconnected; hence $\{u,v\}$ is disconnected in $(V,E)$.
Similarly, if $\{2\langle u,v,a\rangle+1,2\langle u',v',a'\rangle+1\}$ is disconnected in $(V',E')$, so is $(2u,2u')$; hence $(u,u')$ is disconnected in $(V,E)$.

${\sf Orbit}_{\geq 2}\leq_{\sf m}{\sf DisConn}_{\sf fun}$:
Given a group action $G\times S\to S$, consider the (directed) graph $\gamma\colon G\times S\to S^2$ defined by $\gamma(g,a)=(a,g\cdot a)$.
Intuitively, $\gamma$ consists of edges of the form $a\edge{g}g\cdot a$, but this must be distinguished from $b\edge{g}g\cdot b$ for $b\not=a$, so we add the information on tails to the names of these edges; that is, the name of the former is $(g,a)$ and the latter is $(g,b)$.
Now every $g\in G$ is invertible, so we have $\gamma(g^{-1},g\cdot a)=(g\cdot a,a)$.
As the information of $\circ^{-1}$ is contained in a code of $G$, given an edge $u\edge{e}v$ in $\gamma$, one can always effectively find an edge $v\edge{c}u$ in $\gamma$; that is, if $e=(g,a)$ then $c=(g^{-1},g\cdot a)$.
Hence $\gamma$ can be modified to an undirected graph $\gamma'\colon G\times S\to [S]^2$.
Now, it is easy to see that $a,b\in S$ belong to the same orbit iff $a$ and $b$ are connected by a finite path in $\gamma'$.

${\sf DisConn}_{\sf fun}\leq_{\sf m}{\sf Orbit}_{\geq 2}$:
Given a function presentation of a graph $\gamma\colon E\to[V]^2$, consider the free group $F_E$ over the set $E$.
Then define the $F_E$-action on $V$ as follows:
For $a\in E$, if $\gamma(a)=\{u,v\}$ then put $a\cdot u=v$ and $a\cdot v=u$.
For $w\in V\setminus\{u,v\}$, put $a\cdot w=w$.
Then $a$ and $b$ are connected by a finite path in $\gamma$ iff $a,b\in S$ belong to the same orbit.
\end{proof}

Interestingly, as we will see later, ${\sf DisConn}$ is half $\Sigma^0_2$-hard, but not $\Sigma^0_2$-complete.

\subsubsection{$\Sigma^0_2$-complete}
The last group consists of ``genuine'' $\Sigma^0_2$-complete sets.
Of course, a meta-mathematical example of a $\Sigma^0_2$-complete subobject of $\om^\om$ is $\eval{x\in\om^\om:\exists a\in\om\forall b\in\om\theta(a,b,x)}$, where $\exists a\forall b\theta(a,b,x)$ is a universal $\Sigma^0_2$ formula.
This is equivalent to the following:

\begin{example}
Consider the following subobject ${\sf Truth}_{\Sigma^0_2}\mono\om^\om$:
\begin{itemize}
\item The underlying set is $\{(x_n)_{n\in\om}\in(\om^\om)^\om:(\exists n\in\om)\;x_n=0^\infty\}$.
\item $(n,p)$ is a name of $x=(x_n)_{n\in\om}\in{\sf Truth}_{\Sigma^0_2}$ iff $p=x$ and $x_n=0^\infty$ holds.
\end{itemize}
\end{example}

\begin{obs}
${\sf Truth}_{\Sigma^0_2}$ is complete w.r.t.~$\Sigma^0_2$ subobjects of $\om^\om$.
\end{obs}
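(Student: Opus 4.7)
The plan is to show both that ${\sf Truth}_{\Sigma^0_2}$ is $\Sigma^0_2$ and that every $\Sigma^0_2$ subobject of $\om^\om$ is many-one reducible to it, by turning the defining formula of $A$ directly into an element of $(\om^\om)^\om$. The construction is uniform in the $\Pi^0_1$ ingredients, so the non-trivial content is essentially name-bookkeeping.

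First I would verify membership: the sequence $C_n:=\eval{(x_k)_k\in(\om^\om)^\om:x_n=0^\infty}$ is uniformly $\Pi^0_1$ (it is the $n$-th projection of the $\Pi^0_1$ subobject $\{(n,(x_k)_k):x_n=0^\infty\}$ of $\om\times(\om^\om)^\om$). Then ${\sf Truth}_{\Sigma^0_2}\equiv\biguplus_{n\in\om}C_n$, confirming that it lies in $\Sigma^0_2$.

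For hardness, let $A\mono\om^\om$ be an arbitrary $\Sigma^0_2$ subobject, so $A\equiv\biguplus_{n\in\om}B_n$ for a uniformly $\Pi^0_1$ sequence $(B_n)_{n\in\om}$. By the curried form of the $\Pi^0_1$ characterization recalled just after the definition of open subobjects, the uniform $\Pi^0_1$-ness supplies a single morphism $g\colon\om\times\om^\om\to\om^\om$ such that $x\in B_n$ iff $g(n,x)=0^\infty$. Define the reduction $\varphi\colon\om^\om\to(\om^\om)^\om$ by $\varphi(x)(n)=g(n,x)$. Then $x\in A$ iff some $g(n,x)=0^\infty$ iff $\varphi(x)\in|{\sf Truth}_{\Sigma^0_2}|$, so condition (1) of Definition \ref{def:represented-many-one-reducibility} holds, and $\varphi$ is a morphism because $g$ is.

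It remains to exhibit the two realizers. For $r_-$: a name of $x\in A$ is a pair $\langle n,q\rangle$ where $n$ is a witness and, since $B_n$ is regular (Observation \ref{obs:open-closed-is-regular}), $q$ can be taken to be the $\om^\om$-name of $x$; the same $n$ is a witness for $\varphi(x)\in{\sf Truth}_{\Sigma^0_2}$, so setting $r_-(\langle n,q\rangle)=\langle n,\varphi(q)\rangle$ (where the second coordinate is obtained by applying a tracker of $\varphi$ to $q$) yields a ${\sf Truth}_{\Sigma^0_2}$-name of $\varphi(x)$. For $r_+$: given an $\om^\om$-name $p$ of $x$ and a name $\langle m,q\rangle$ of $\varphi(x)\in{\sf Truth}_{\Sigma^0_2}$, the integer $m$ satisfies $\varphi(x)(m)=0^\infty$, i.e.\ $g(m,x)=0^\infty$, so $x\in B_m$; since $B_m$ is regular, $p$ itself serves as a $B_m$-name, and $r_+(p,\langle m,q\rangle):=\langle m,p\rangle$ is an $A$-name of $x$.

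There is no real obstacle here; the only subtle point is invoking regularity of $\Pi^0_1$ subobjects to drop the ``inner'' name $q$ in favor of the ambient $\om^\om$-name, which is precisely what allows both $r_-$ and $r_+$ to be defined without any extra witness-amalgamation apparatus (contrast with the ${\sf Fin}$ situation in Theorem \ref{thm:fin-plus-incomplete}, where no such direct passage exists).
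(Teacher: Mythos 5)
Your proof is correct, and it is essentially the argument the paper has in mind: the observation is left unproved there precisely because it amounts to the universality of the defining $\Sigma^0_2$ formula, which is what you spell out by packaging the uniformly $\Pi^0_1$ pieces $B_n$ into the morphism $x\mapsto(g(n,x))_{n\in\om}$ and transferring witnesses verbatim in both directions via regularity of $\Pi^0_1$ subobjects. The only thing worth making explicit is the routine bookkeeping that $A\equiv\biguplus_n B_n$ lets you convert between $A$-names and $\biguplus_n B_n$-names on both sides of $r_-$ and $r_+$, which you have implicitly but correctly used.
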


Next, we give some order-theoretic examples of $\Sigma^0_2$-complete subobjects.

\begin{example}
The space of linear orders ${\sf LO}$ on a subset of $\om$ is a $\Pi^0_1$ subspace of $2^{\om\times\om}$.
Let $(L_x,\leq_x)\in{\sf LO}$ denote the linear order coded by $x\in\om^\om$, where $L_x=\{a\in\om:a\leq_xa\}$.
Consider the following subobject ${\sf NonDense}$ of $\om^\om$:
\begin{itemize}
\item The underlying set is the set of all non-dense linear orders; that is, $|{\sf NonDense}|=\{x\in\om^\om:\neg(\forall a,b\in L_x)\;[a<_xb\to (\exists c\in L_x)\;a<_xc<_xa]\}$.
\item $(a,b,p)$ is a name of $x\in{\sf NonDense}$ iff $p=x$, $a<_xb$ and for any $c\in P_x$ either $c\leq_xa$ or $b\leq_xc$ holds.
\end{itemize}
\end{example}

\begin{example}
A bottomed poset is a tuple $(P,\leq_P,\bot_P)$, where $(P,\leq_P)$ is a poset, and $\bot_P$ is the least element in $P$.
An atom of a bottomed poset $P$ is an element which is minimal among non-bottom elements in $P$.
For $P\subseteq\om$, a bottomed poset $(P,\leq_P,\bot_P)$ is coded by the pair of the characteristic function of $\leq_P$ and the natural number $\bot_P\in P\subseteq\om$.
In this way, the space of bottomed posets ${\sf PO}_\bot$ on a subset of $\om$ can be introduced as a $\Pi^0_1$ subspace of $2^{\om\times\om}\times\om$.

Let $(P_x,\leq_x,\bot_x)\in{\sf PO}_\bot$ denote the bottomed poset coded by $x\in\om^\om$, where $P_x=\{a\in\om:a\leq_xa\}$.
Consider the following subobject ${\sf Pos}_{\sf atom}$ of $\om^\om$:
\begin{itemize}
\item The underlying set is the set of all partial orders having atoms; that is, $|{\sf PO}_{\sf atom}|=\{x\in\om^\om:(\exists a\in P_x)(\forall b\in P_x)\;b<_xa\to b=\bot_x\}$.
\item $(a,p)$ is a name of $x\in{\sf PO}_{\sf atom}$ iff $p=x$, for any $c\in P_x$ either $c\leq_xa$ or $b\leq_xc$ holds.
\end{itemize}
\end{example}

\begin{prop}
${\sf NonDense}$ is complete w.r.t.~$\Sigma^0_2$ subobjects of $\om^\om$.
\end{prop}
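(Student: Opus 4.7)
My plan is to prove both halves of completeness: that ${\sf NonDense}$ is itself a $\Sigma^0_2$ subobject of $\om^\om$, and that every $\Sigma^0_2$ subobject of $\om^\om$ is many-one reducible to it. The first half is immediate: I would write ${\sf NonDense}\equiv\biguplus_{(a,b)\in\om^2}G_{a,b}$ where $G_{a,b}=\eval{x:a<_xb\land\forall c\in L_x.(c\leq_xa\lor b\leq_xc)}$ is uniformly $\Pi^0_1$ in $(a,b)$, and the canonical witness for $x\in{\sf NonDense}$ is precisely the pair realizing a gap.

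For hardness, given an arbitrary $\Sigma^0_2$ subobject $A\mono\om^\om$, I would first use the decomposition $A\equiv\biguplus_n B_n$ with $(B_n)$ uniformly $\Pi^0_1$ and the Sierpi\'nski-dominance description of closed subobjects of $\om^\om$ to assume $A=\eval{x:\exists n\forall m.\,P(n,m,x)=1}$ for some morphism $P\colon\om\times\om\times\om^\om\to\mathbf{2}$, whose names of $x\in A$ are pairs $\pair{n,p}$ with $n$ the outer witness and $p$ a name of $x$. The reduction $\varphi\colon x\mapsto L_x$ then codes a linear order on a subset of $\om$: using a fixed pairing, reserve for each $n$ two markers $a_n,b_n$, gap-fillers $c_{n,k}$, and background fillers $d_{n,k}$, and, picking an enumeration $(q_k)$ of $\mathbb{Q}\cap(0,1)$, assign rational positions $\pi(a_n)=3n$, $\pi(b_n)=3n+1$, $\pi(c_{n,k})=3n+q_k$, $\pi(d_{n,k})=3n+1+2q_k$. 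Declare $a_n,b_n,d_{n,k}$ always in $L_x$, and declare $c_{n,k}\in L_x$ iff there exists $m\leq k$ with $P(n,m,x)=0$; set $u\leq_x v$ iff $u,v\in L_x$ and $\pi(u)\leq\pi(v)$. Each bit of this characteristic function depends realizably on $x$, so $\varphi$ is a morphism $\om^\om\to{\sf LO}$.

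The core verification is that the gaps of $L_x$ are exactly the pairs $(a_n,b_n)$ for witnessing $n$: across slots the $d_{n,k}$'s densely fill $(3n+1,3n+3)$, so $b_n$ has no immediate successor and $a_{n+1}$ no immediate predecessor; inside a slot, if $\forall m.\,P(n,m,x)=1$ then no $c_{n,k}$ ever enters $L_x$ and $(a_n,b_n)$ is a genuine gap, while if some $m_0$ gives $P(n,m_0,x)=0$ then $c_{n,k}\in L_x$ for every $k\geq m_0$ and the tail $\{q_k:k\geq m_0\}$ is still dense in $(0,1)$, killing the gap. The witness translators are then $r_-\colon\pair{n,p}\mapsto\pair{a_n,b_n,\varphi(x)}$ and $r_+\colon(p,\pair{a,b,q})\mapsto\pair{n,p}$, where $n$ is decoded from the marker index of $a=a_n$ via the fixed bijection; each is clearly realizable, so Definition \ref{def:represented-many-one-reducibility} is satisfied. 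The main obstacle I expect is ruling out spurious gaps, in particular verifying that cofinite tails of $(q_k)$ remain dense in $(0,1)$ so that activating only the tail $\{c_{n,k}:k\geq m_0\}$ truly kills the intra-slot gap, and checking that the boundary between two slots (around $b_n$ and $a_{n+1}$) does not accidentally produce an unintended immediate-successor pair with some $d_{n,k}$; once these routine density checks are secured, the triple $(\varphi,r_-,r_+)$ witnesses $A\leq_m{\sf NonDense}$, completing the proof.
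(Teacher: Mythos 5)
Your proposal is correct and takes essentially the same route as the paper: for each candidate witness $n$ you reserve a designated pair of points that remains a gap exactly when the $\Pi^0_1$ condition at $n$ holds and gets densified as soon as it fails, and $r_-$, $r_+$ simply translate between the outer witness $n$ and the gap pair, exactly as in the paper's reduction. The only differences are organizational — you reduce directly from the normal form $\exists n\forall m\,P(n,m,x)=1$ instead of factoring through the complete subobject ${\sf Truth}_{\Sigma^0_2}$, and your always-present dense buffer elements $d_{n,k}$ between consecutive blocks slightly streamline the paper's case analysis showing that no witness pair can straddle two blocks.
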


\begin{proof}
It suffices to show ${\sf Truth}_{\Sigma^0_2}\leq_{\sf m}{\sf NonDense}$.
Given $x=(x_n)_{n\in\om}$, we construct a linear order $L=L_{\varphi(x)}$.
First put $(n,0)<_L(n+1,0)$ for any $n\in\om$.
As long as $x_n=0^\infty$ is true, nothing is enumerated between $(n,0)$ and $(n+1,0)$.
If we see $x_n\not=0^\infty$ at stage $s$, we enumerate elements of the form $(n,t)$ for $t\geq s$ between $(n,0)$ and $(n+1,0)$ so that the $\leq_L$-interval $[(n,0),(n+1,0)]$ eventually becomes dense.

If $n$ is a witness for $x\in{\sf Truth}_{\Sigma^0_2}$, i.e., $x_n=0^\infty$, then there is no element between $(n,0)$ and $(n+1,0)$, so this pair is a witness for $\varphi(x)\in{\sf NonDense}$.
Conversely, let $\pair{(n,i),(m,j)}$ be a witness for $\varphi(x)\in{\sf NonDense}$.
We may assume $n\leq m$.
If $n+1<m$ then we have $(n,i)<(n+1,0)<(m,j)$, so $\pair{(n,i),(m,j)}$ cannot be a witness.
If $n=m$ and $i\not=j$, then either $i\not=0$ or $j\not=0$; that is, some $(n,k)$ for $k\not=0$ is enumerated into $L$.
By our construction, this means that the $\leq_L$-interval $[(n,0),(n+1)]$ becomes dense, and any element of the form $(n,k)$ is enumerated into this interval.
In particular, $(n,i)$ and $(n,j)$ are contained in the $\leq_L$-dense interval $[(n,0),(n+1)]$, so some $(n,k)$ is enumerated between $(n,0)$ and $(n+1,0)$; hence $\pair{(n,i),(m,j)}$ cannot be a witness.
Therefore, we have $m=n+1$, but there is no element between $(n,i)$ and $(n+1,j)$ then we must have $i=j=0$, which means $x_n=0^\infty$; that is, $n$ is a witness for $x\in{\sf Truth}_{\Sigma^0_2}$.
\end{proof}

\begin{prop}
${\sf PO}_{\sf atom}$ is complete w.r.t.~$\Sigma^0_2$ subobjects of $\om^\om$.
\end{prop}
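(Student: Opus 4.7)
The plan is to mirror the argument used for ${\sf NonDense}$ by constructing a many-one reduction ${\sf Truth}_{\Sigma^0_2}\leq_m{\sf PO}_{\sf atom}$. Given $x=(x_n)_{n\in\om}$ I would build a bottomed poset $\varphi(x)=(P,\leq_P,\bot_P)$ whose atoms correspond bijectively to those indices $n$ with $x_n=0^\infty$.

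Concretely, I would fix a bottom element $\bot_P$, a distinguished element $a_n$ for each $n\in\om$, and auxiliary filler elements $f_{n,t}$ indexed by $n\in\om$ and $t\geq 1$ (all encoded as distinct natural numbers via a standard pairing). I would place $\bot_P$ and every $a_n$ unconditionally in $P$ with $\bot_P<_Pa_n$ and the $a_n$'s pairwise $\leq_P$-incomparable above $\bot_P$. I would declare $f_{n,t}\in P$ iff there exists $r\leq t$ with $x_n(r)\neq 0$, a condition decidable from a finite prefix of $x$, which ensures that the characteristic function of $\leq_P$ is a total computable function of $x$. Whenever such an $f_{n,t}$ is in $P$, I would set $\bot_P<_Pf_{n,t}<_Pa_n$, and order the $f_{n,t}$'s among themselves by fixing an enumeration $(q_t)_{t\geq 1}$ of $\mathbb{Q}\cap(0,1)$ and declaring $f_{n,t}<_Pf_{n,t'}$ iff $q_t<q_{t'}$. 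If $x_n\neq 0^\infty$, then $\{t:f_{n,t}\in P\}$ is cofinite, so the induced order on these $f_{n,t}$'s is isomorphic to a cofinite suborder of $\mathbb{Q}\cap(0,1)$, which remains dense and without minimum; consequently the interval $(\bot_P,a_n]$ is densely ordered above $\bot_P$ and contains no atom.

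For the correctness of the reduction: if $n$ is a witness for $x\in{\sf Truth}_{\Sigma^0_2}$, i.e.\ $x_n=0^\infty$, then no $f_{n,t}$ enters $P$, so the only element strictly below $a_n$ is $\bot_P$, making $a_n$ an atom of $\varphi(x)$. Conversely, any atom of $\varphi(x)$ must be some $a_n$ (since density rules out any $f_{n,t}$, and the $a_n$'s below distinct branches are incomparable), and such an $a_n$ is an atom precisely when $x_n=0^\infty$. Hence the inner reduction $r_-$ sends a witness $n$ to the atom $a_n$, and the outer reduction $r_+$ decodes a given atom back to its index $n$, at which point $x_n=0^\infty$ is guaranteed.

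The main technical point to check is that the bit-level characteristic function $\chi_{\leq_P}$ really is produced by a morphism $\om^\om\to 2^{\om\times\om}\times\om$, which reduces to verifying that each individual entry depends on only finitely many values of $x$; by the design above each entry is determined by inspecting $x_n(0),\dots,x_n(t)$ for suitable $n,t$, so this is immediate. The only other thing to audit is that $\leq_P$ really is a partial order (reflexivity, antisymmetry, and transitivity across the three layers $\{\bot_P\}$, $\{f_{n,t}\}$, $\{a_n\}$), which is straightforward because the chains all go through $\bot_P$ or through a single branch $n$. I therefore do not anticipate any substantive obstacles.
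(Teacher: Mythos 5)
Your proof is correct and follows essentially the same route as the paper: both reduce ${\sf Truth}_{\Sigma^0_2}$ to ${\sf PO}_{\sf atom}$ by building one branch per index $n$ above $\bot_P$, keeping its top element an atom as long as $x_n=0^\infty$ and destroying every atom in that branch once a nonzero value of $x_n$ is detected, with $r_-$ and $r_+$ merely translating between the witness index $n$ and the atom of branch $n$. The only difference is the spoiling gadget: the paper enumerates an infinite descending chain below $(n,0)$ from the detection stage onward, while you insert a cofinite-in-$\mathbb{Q}\cap(0,1)$ dense set of fillers with no minimum --- both equally guarantee that a spoiled branch contains no minimal non-bottom element.
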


\begin{proof}
It suffices to show ${\sf Truth}_{\Sigma^0_2}\leq_{\sf m}{\sf PO}_{\sf atom}$.
Given $x=(x_n)_{n\in\om}$, we construct a partially ordered set $P=P_\varphi(x)$.
Put $\bot\in P$ and $(n,0)\in P$ for each $n\in\om$.
As long as $x_n=0^\infty$ is true, nothing other than $\bot$ is enumerated below $(n,0)$, which becomes an atom in $P$.
If we see $x_n\not=0^\infty$ at stage $s$, we enumerate an infinite decreasing sequence $(n,0)>_P(n,s)>_P(n,s+1)>_P\dots$.

If $n$ is a witness for $x\in{\sf Truth}_{\Sigma^0_2}$, i.e., $x_n=0^\infty$, then $(n,0)$ is an atom in $P$.
Conversely, if $(n,s)$ is an atom in $P$, we must have $s=0$ and $x_n=0^\infty$; that is, $n$ is a witness for $x\in{\sf Truth}_{\Sigma^0_2}$.
\end{proof}

We also introduce a $\Sigma^0_2$-complete subobject concerning trees.

\begin{example}\label{exa:sigma-02-complete5}
Let ${\sf Tr}_2$ be the represented space of binary trees.
Consider the following subobject ${\sf Tr}_2(\geq 2)$ of ${\sf Tr}_2$:
\begin{itemize}
\item The underlying set is the set of all binary trees which have at least two infinite paths.
\item $(\sigma,\tau,p)$ is a name of $T\in{\sf Tr}_2(\geq 2)$ iff $p$ is a ${\sf Tr}_2$-name of $T$, and $\sigma$ and $\tau$ are incomparable finite strings which are extendible to infinite paths in $T$.
\end{itemize}
\end{example}

\begin{prop}\label{prop:basic-classification-sigma-02-complete}
${\sf Tr}_2(\geq 2)$ is complete w.r.t.~$\Sigma^0_2$ subobjects of $\om^\om$.
\end{prop}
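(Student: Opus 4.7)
By the observation immediately before, ${\sf Truth}_{\Sigma^0_2}$ is complete with respect to $\Sigma^0_2$ subobjects of $\om^\om$, so by transitivity of $\leq_m$ it suffices to exhibit a many-one reduction ${\sf Truth}_{\Sigma^0_2}\leq_m{\sf Tr}_2(\geq 2)$. The plan is to build, from $x=(x_n)_{n\in\om}$, a binary tree $T_{\varphi(x)}$ with a permanent ``spine'' $0^\infty$, together with a candidate side branch hanging off each node $0^n1$, the $n$th branch being allowed to grow to an infinite path exactly when $x_n=0^\infty$.

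Concretely, I would define $\varphi(x)=T$ by declaring $\sigma\in T$ iff either $\sigma=0^k$ for some $k\in\om$, or $\sigma=0^n10^k$ for some $n,k$ with $x_n(j)=0$ for every $j<k$. Downward closure is immediate: any prefix of $0^k$ is again of the same form, and truncating $0^n10^k$ either gives $0^\ell$ with $\ell\le n$ or $0^n10^{k'}$ with $k'\le k$, and the condition on $x_n$ inherits monotonically. Since the defining condition is decidable uniformly in $(x,\sigma)$, $\varphi$ is a morphism $(\om^\om)^\om\to{\sf Tr}_2$.

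The verification of the equivalence $x\in|{\sf Truth}_{\Sigma^0_2}|\iff\varphi(x)\in|{\sf Tr}_2(\geq 2)|$ is the content. The spine $0^\infty$ is always an infinite path of $T$. Any second infinite path $p\neq 0^\infty$ must contain a $1$, and by construction every string of $T$ that contains a $1$ is of the form $0^n10^k$; hence $p=0^n10^\infty$, which lies in $T$ iff $x_n(j)=0$ for every $j$, i.e.\ iff $x_n=0^\infty$. Thus a second infinite path exists precisely when some $x_n=0^\infty$.

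For the inner reduction $r_-$: given a witness $n$ for $x\in{\sf Truth}_{\Sigma^0_2}$, return the pair $(0^{n+1},0^n1)$, which are incomparable nodes of $T$ extendible to the paths $0^\infty$ and $0^n10^\infty$ respectively. For the outer reduction $r_+$: given incomparable finite strings $\sigma,\tau$ extendible to infinite paths of $T=\varphi(x)$, at least one of them contains a $1$ (they cannot both be initial segments of $0^\infty$), and any such node in $T$ has the unique form $0^n10^k$; read off the position $n$ of the first $1$ and output it as a witness for $x\in{\sf Truth}_{\Sigma^0_2}$. I do not anticipate a genuine obstacle: this is essentially the classical $\Sigma^0_2$-hardness construction, and what makes it work for witnessed many-one reducibility is the rigidity of the tree, namely that every $1$-containing node of $T$ determines its branch index $n$ unambiguously, so the witness is effectively recoverable from any certificate of $\varphi(x)\in{\sf Tr}_2(\geq 2)$.
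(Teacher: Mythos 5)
Your proof is correct and takes essentially the same route as the paper's: a reduction ${\sf Truth}_{\Sigma^0_2}\leq_m{\sf Tr}_2(\geq 2)$ via a tree with a permanent spine $0^\infty$ and a side branch at $0^n1$ that survives exactly as long as $x_n$ has shown only zeros, with $r_-(n,x)=(0^{n+1},0^n1)$ and $r_+$ reading the branch index off whichever of the two incomparable certificate strings contains a $1$. The differences are cosmetic (your side branches are $0^n10^k$ where the paper uses $0^n1^k$), and your verification states the sign conventions more carefully than the paper's write-up does.
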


\begin{proof}
It suffices to show ${\sf Truth}_{\Sigma^0_2}\leq_{\sf m}{\sf Tr}_2(\geq 2)$.
Given $x\in\om^\om$, construct a tree $T_x$ such that $0^m\in T_x$ for any $m\in\om$ and $0^n1^s\in T_x$ iff $x_n=0^\infty$ is not yet recognized when $x$ is read up to $s$.
Note that $0^\infty$ is always an infinite path through $T_x$, and $0^n1^\infty$ is an infinite path through $T_x$ iff $x_n\not=0^\infty$.
Therefore, $x\in |A|$ iff $T_x$ has at least two infinite paths, i.e., $T_x\in|{\sf Tr}_2(\geq 2)|$.

Given a witness $n$ for $x\in A$, we know $x_n\not=0^\infty$, so $(0^{n+1},0^n1)$ is an incomparable extendible pair in $T_x$; hence $r_-(n,x):=(0^{n+1},0^n1)$ is a witness for $T_x\in{\sf Tr}_2(\geq 2)$.
Conversely, if $(\sigma,\tau)$ is a witness for $T_x\in{\sf Tr}_2(\geq 2)$, either $\sigma$ or $\tau$ is of the form $0^m1^k$, and in this case, we have $x_m\not=0^\infty$; hence $r_+(\sigma,\tau,x):=m$ is a witness for $x\in A$.
\end{proof}

\subsection{Many-one degree structure}

\subsubsection{Completeness and hardness}

Let us first confirm that each of the first two levels of $\Sigma^0_2$ subobjects has a complete subobject.

\begin{theorem}\label{thm:fin-complete-unique-witness-property}
${\sf Fin}$ is complete w.r.t.~$\Sigma^0_2$ subobjects of $\om^\om$ having the unique witness property.
\end{theorem}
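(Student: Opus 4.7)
My plan is to construct an explicit many-one reduction from any $\Sigma^0_2$ subobject $A\mono X$ with the unique witness property into ${\sf Fin}$. Membership of ${\sf Fin}$ itself in this class is immediate from Observation \ref{obs:unique-witness-formula-chara} applied to its defining formula $\exists n\forall m\geq n.\,x(m)=0$, so all the work is in proving hardness. By hypothesis, $A\equiv\biguplus_{n\in\om}A_n$ where $(A_n)_n$ is uniformly $\Pi^0_1$ with $|A_n|\cap|A_m|=\emptyset$ for $n\neq m$; equivalently there is a morphism $h\colon\om\times\om\times X\to 2$ with $x\in A_n\iff\forall m.\,h(n,m,x)=1$, and a name of $x\in A$ may be taken as $\pair{n,p}$ where $n$ is the unique witness and $p$ is an $X$-name of $x$.

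First I would define the reducing morphism $\varphi\colon X\to\om^\om$ as follows. Let $N_s(x)$ denote the least $n$ not yet refuted by stage $s$, i.e., the least $n$ such that $h(n,m,x)=1$ for every $m\leq s$. Put $\varphi(x)(0)=0$ and, for $s\geq 1$, set $\varphi(x)(s)=1$ if $N_s(x)\neq N_{s-1}(x)$ and $\varphi(x)(s)=0$ otherwise. One checks $\varphi(x)\in{\sf Fin}\iff x\in A$: if $x\in A_n$ then $N_s(x)\leq n$ always, and once each $k<n$ has been refuted (which must occur, since disjointness gives $x\notin A_k$), $N_s(x)$ stabilises at $n$; if $x\notin A$ then $N_s(x)\to\infty$, so $\varphi(x)$ has infinitely many $1$'s.

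For the inner reduction $r_-$, given a name $\pair{n,p}$ of $x\in A$, I compute $s^*=\max_{k<n}\min\{m:h(k,m,x)=0\}$, which terminates precisely because disjointness forces $x\notin A_k$ for each $k<n$. Then $N_t(x)=n$ for all $t>s^*$, so $s^*+1$ witnesses $\varphi(x)\in{\sf Fin}$, and $\pair{s^*+1,\varphi(x)}$ is a valid ${\sf Fin}$-name. For the outer reduction $r_+$, given an $X$-name $p$ of $x$ and a ${\sf Fin}$-name $\pair{s,q}$ of $\varphi(x)$, the constancy of $\varphi(x)$ from $s$ on forces $N_t(x)$ to be constant on $[s,\infty)$; computing $n=N_s(x)$ from $p$, this $n$ is never refuted, so $x\in A_n$ and $\pair{n,p}$ is a valid $A$-name.

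The main conceptual point to verify is not the correctness of $\varphi$ but the realisability of $r_-$: without disjointness, the search for $m$ with $h(k,m,x)=0$ could diverge when $k<n$ is itself a witness, which is precisely the obstruction that prevents ${\sf L}\leq_m{\sf Fin}$ in Theorem \ref{thm:fin-plus-incomplete}. The unique witness property is exactly what repairs this defect. Everything else is routine: $\varphi$ is a morphism because $h$ is, and all transformations of names are uniformly computable from trackers of $h$.
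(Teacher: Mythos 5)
Your overall strategy is the same as the paper's: membership of ${\sf Fin}$ in the class via the least-witness normalization (your appeal to Observation \ref{obs:unique-witness-formula-chara} is legitimate), and hardness via the classical mind-change reduction into ${\sf Fin}$, with an inner witness translation that terminates because disjointness forces $x\not\in A_k$ for every $k$ below the unique witness, and an outer translation that reads off the stabilized index. The bookkeeping for $r_-$ and $r_+$ is essentially correct and matches the paper's argument.

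There is, however, one step that fails as written: your reducing map $\varphi$ need not be total, hence need not be a morphism. You define $N_s(x)$ as the least $n$ not refuted by stage $s$, which requires an unbounded search over $n$, and such an $n$ may not exist at all. Concretely, take $A_0=\{0^\infty\}$ (via $h(0,m,x)=1$ iff $x(m)=0$) and $A_n=\emptyset$ for $n\geq 1$ (via $h(n,m,x)=0$); this is a uniformly $\Pi^0_1$, pairwise disjoint family whose union trivially has the unique witness property, yet for any $x$ with $x(0)\neq 0$ every index is already refuted at stage $0$, so the search for $N_0(x)$ diverges and $\varphi(x)$ is undefined. Since Definition \ref{def:represented-many-one-reducibility} demands a morphism $\varphi$ defined on all of $\om^\om$ with $x\in|A|$ if and only if $\varphi(x)\in|{\sf Fin}|$ for every $x$, this is a genuine defect and not just a presentational one. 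It is also exactly the point where the paper's construction differs: there the procedure waits at its current index $n$, emitting $0$'s, and advances to $n+1$ (emitting a single $1$) only when the tracker recognizes $x\not\in A_n$, so no unbounded search over indices ever occurs. Equivalently, you can repair your version by restricting the search defining $N_s(x)$ to $n\leq s$, with a default value (say $s+1$) when all of $0,\dots,s$ are refuted; the marker remains nondecreasing, the equivalence $x\in|A|\iff\varphi(x)\in|{\sf Fin}|$ still holds, and your computations of $s^*$ for $r_-$ and of the stabilized index for $r_+$ go through with only trivial adjustments (e.g., using $\max(s^*,n)+1$ as the ${\sf Fin}$-witness). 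With that fix your proof is correct and is essentially the paper's proof.
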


\begin{proof}
We first check that ${\sf Fin}$ has the unique witness property.
The idea is to use a process to search for the least witness for $x\in{\sf Fin}$ using its name.
First, we divide ${\sf Fin}$ according to what its least witness is.
Namely, let $F_n$ be a subspace of $\om^\om$ whose underlying set is:
\[|F_n|=\{x\in\om^\om:(\forall m\geq n)\;x(m)=0\ \land\ (\forall m<n)(m\leq \exists k<n)\;x(k)\not=0\}.\]

Then we claim that ${\sf Fin}\equiv\biguplus_nF_n$.
One can easily observe that $|{\sf Fin}|=|\biguplus_nF_n|$, and any name of $x\in\biguplus_nF_n$ is also a name of $x\in{\sf Fin}$.
Given a name $(n,x)$ of $x\in{\sf Fin}$, search for its least witness; that is, the least $m\leq n$ such that $x(k)=0$ for any $m\leq k\leq n$.
One can see that $x\in F_m$ for such $m$.
Then $(m,x)$ is a name of $x\in\biguplus_nF_n$.

For the completeness, assume that $A$ has the unique witness property via $A\equiv\biguplus_nA_n$.
Let $f_n$ be a witness for closedness of $A_n$; that is, $x\in A_n$ iff $f_n(x)=0^\infty$.
Given $x\in\om^\om$, by continuity of $f_n$, if $x\not\in A_n$ (i.e., $f_n(x)$ returns some nonzero value), that can be recognized after reading a finite initial segment of $x$.
Starting from $n=0$, $\varphi(x)$ keeps outputting $0$ until $f_n$ recognizes $x\not\in A_n$.
When it recognizes $x\not\in A_n$, $\varphi(x)$ outputs $1$, and then repeat the same procedure with $n+1$.
This reduction is exactly the same as the proof of the classical $\Sigma^0_2$-completeness, which shows that $x\in|\biguplus_nA_n|$ iff $\varphi(x)\in|{\sf Fin}|$.

Given a name $(n,x)$ of $x\in\biguplus_nA_n$, we have $x\in A_n$.
By uniqueness, we also have $x\not\in A_m$ for any $m<n$.
Therefore, the above procedure eventually recognizes $x\not\in A_m$ for each $m<n$, and thus, it arrives at a state waiting for $f_n$ to recognize $x\not\in A_n$ at some stage $s_n$.
Since $x\in A_n$, it is never recognized, so $\varphi(x)$ continues to output $0$ forever after stage $s_n$.
Therefore, $r_-(n,x)=(s_n,\varphi(x))$ is a name of $x\in{\sf Fin}$.

Given a name $(s,p)$ of $\varphi(x)\in{\sf Fin}$, run the above procedure for $s$ steps.
At that point, the process is waiting for $f_k$ to recognize $x\not\in A_k$ for some $k$.
If it is ever recognized, $\varphi(x)$ outputs $1$ at somewhere after the $s$th bit, which is impossible because of the assumption that $(s,p)$ is the name of $\varphi(x)\in{\sf Fin}$.
Therefore, $x\not\in A_k$ is never recognized, that is, $x\in A_k$.
Therefore, $r_+(x,(s,p))=(k,x)$ is a name of $x\in\biguplus_nA_n$.
\end{proof}

Combining with Observation \ref{obs:unique-witness-formula-chara}, this verifies our claim that ${\sf Fin}$ is complete among $\Sigma^0_2$ subobjects defined by a formula of the form $\forall n\exists m\geq n.\varphi(m,x)$, where $\varphi$ is decidable.

\begin{theorem}\label{thm:bddseq-complete-increasing-witness-property}
${\sf BddSeq}_\om$ is complete w.r.t.~$\Sigma^0_2$ subobjects of $\om^\om$ having the increasing witness property.
\end{theorem}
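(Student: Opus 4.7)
The plan is to mirror the structure of the proof of Theorem \ref{thm:fin-complete-unique-witness-property}, adapting the counting trick to the ``increasing'' setting. First I would check that ${\sf BddSeq}_\om$ itself has the increasing witness property. Writing $B_b = \{x\in\om^\om : (\forall n)\; x(n)\leq b\}$, each $B_b$ is $\Pi^0_1$, and obviously $|B_b|\subseteq|B_{b+1}|$. A ${\sf BddSeq}_\om$-name $(b,x)$ matches a $\biguplus_b B_b$-name, so ${\sf BddSeq}_\om\equiv\biguplus_b B_b$. (Alternatively invoke the second observation after Observation \ref{obs:unique-witness-formula-chara}, since ${\sf BddSeq}_\om$ is definable as $\exists b\forall n.\;x(n)\leq b$.)

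For hardness, let $A\mono\om^\om$ be $\Sigma^0_2$ with the increasing witness property, so $A\equiv\biguplus_n A_n$ with $(A_n)_{n\in\om}$ a uniformly $\Pi^0_1$ sequence and $|A_n|\subseteq|A_m|$ for $n\leq m$. The key observation is that, because the sequence is increasing, the set $N(x)=\{m:x\notin A_m\}$ is a downward-closed subset of $\om$; it is finite (of cardinality equal to the least witness $n_0$) when $x\in A$, and equal to all of $\om$ when $x\notin A$. Let $f_n$ be a tracker of the co-characteristic function $\overline{\chi}_{A_n}\colon\om^\om\to\mathbb{S}$. Define
\[
\varphi(x)(s) \;=\; \bigl|\{\,m\leq s : f_m \text{ recognizes } x\notin A_m \text{ within } s \text{ steps}\,\}\bigr|.
\]
This is clearly a morphism $\om^\om\to\om^\om$. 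Since $\varphi(x)(s)$ is non-decreasing in $s$ with supremum $|N(x)|$, we have $\varphi(x)\in|{\sf BddSeq}_\om|$ iff $x\in|A|$, and moreover the least upper bound of $\varphi(x)$ is exactly the least witness $n_0$.

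For the witness translations: given a name $(n,p)$ of $x\in\biguplus_n A_n$, we have $n_0\leq n$, so $n$ is itself an upper bound for $\varphi(x)$, and $r_-(n,p):=(n,\varphi(x))$ is a ${\sf BddSeq}_\om$-name of $\varphi(x)$. Conversely, given an $\om^\om$-name $p=x$ together with a ${\sf BddSeq}_\om$-name $(b,\varphi(x))$, we know $|N(x)|\leq b$, hence $n_0\leq b$; by the increasing property $x\in A_b$, and since $A_b$ is $\Pi^0_1$ hence regular (Observation \ref{obs:open-closed-is-regular}), the $X$-name $x$ is already an $A_b$-name. Thus $r_+(p,(b,q)):=(b,x)$ is a $\biguplus_n A_n$-name of $x$. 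Transferring along $A\equiv\biguplus_n A_n$ yields $A\leq_m{\sf BddSeq}_\om$.

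The only step that is more than bookkeeping is the identification $|N(x)|=\lim_s\varphi(x)(s)$, which relies critically on the increasing witness property to guarantee that $N(x)$ is downward closed — and hence that boundedness of the counting sequence is equivalent to finiteness of $N(x)$. I do not anticipate a genuine obstacle: once the right definition of $\varphi$ is in place, the verifications are routine analogues of the classical $\Sigma^0_2$-completeness proof, with the counter replacing the ``search for the least witness'' used for ${\sf Fin}$.
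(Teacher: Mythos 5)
Your proof is correct and follows essentially the same route as the paper: both use uniform trackers of the co-characteristic functions $\overline{\chi}_{A_n}$ to build a reduction to boundedness whose least upper bound is the least witness, and both recover a witness from a bound $b$ via the increasing property together with the regularity of the $\Pi^0_1$ piece $A_b$. The only difference is cosmetic: your $\varphi(x)(s)$ counts the refuted indices seen so far, whereas the paper's $\varphi$ outputs $n$ at the stage where $A_n$ is refuted and $0$ otherwise, which forces some off-by-one bookkeeping ($r_+$ returning $(b+1,x)$) that your monotone counter avoids.
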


\begin{proof}
For each $k\in\om$, consider $B_k=\eval{x\in\om^\om:(\forall n)\;x(n)<k}$.
Then clearly $|B_k|\subseteq|B_{k+1}|$ and ${\sf BddSeq}_\om=\biguplus_{k\in\om}B_k$.
Hence, ${\sf BddSeq}_\om$ has the increasing witness property.

For the completeness, assume that $A$ has the increasing witness property via $A\equiv\biguplus_nA_n$.
Let $f_n$ be a witness for closedness of $A_n$; that is, $x\in A_n$ iff $f_n(x)=0^\infty$.
Given $x\in\om^\om$, by continuity of $f_n$, if $x\not\in A_n$, that can be recognized after reading a finite initial segment of $x$.
Starting from $n=0$, $\varphi(x)$ keeps outputting $0$ until $f_n$ recognizes $x\not\in A_n$.
When it recognizes $x\not\in A_n$, $\varphi(x)$ outputs $n$, and then repeat the same procedure with $n+1$.
This reduction is exactly the same as the proof of the classical $\Sigma^0_2$-completeness, which shows that $x\in|\biguplus_nA_n|$ iff $\varphi(x)\in|{\sf BddSeq}_\om|$.

Given a name $(k,x)$ of $x\in\biguplus_nA_n$, we have $x\in A_k$.
For the least $n\leq k$ such that $x\in A_n$, we have $x\not\in A_m$ for any $m<n$.
Therefore, the above procedure eventually recognizes $x\not\in A_m$ for each $m<n$, and thus, it arrives at a state waiting for $f_n$ to recognize $x\not\in A_n$ at some stage.
Since $x\in A_n$, it is never recognized, so $\varphi(x)$ continues to output $0$ forever after the stage.
This means, in particular, that $\varphi(x)$ only outputs values less than $n$.
Now, since $n\leq k$, $k$ gives an upper bound of $\varphi(x)$.
Therefore, $r_-(n,x)=(k,\varphi(x))$ is a name of $x\in{\sf BddSeq}_\om$.

Given a name $(b,p)$ of $\varphi(x)\in{\sf BddSeq}_\om$, note that $b$ is an upper bound of $\varphi(x)$, so let $k\leq b$ the least upper bound of $\varphi(x)$.
In particular, during the above process, $\varphi(x)$ outputs $k$ at some stage, but never output $k+1$.
This means that $f_k$ recognizes $x\not\in A_k$, but $f_{k+1}$ never recognizes $x\not\in A_{k+1}$; hence, $x\in A_{k+1}$. 
As $k+1\leq b+1$, $r_+(x,(b,p))=(b+1,x)$ is a name of $x\in\biguplus_nA_n$.
\end{proof}

Next, we show that a half $\Sigma^0_2$-hard subobject bounds these two levels.
For this purpose, we compare these notions with amalgamability.

\begin{obs}\label{obs:increasing-amalgamable}
If a ${\Sigma}^0_2$ subobject $A\mono X$ has the increasing witness property, then $A$ is amalgamable.
\end{obs}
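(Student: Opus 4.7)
The plan is to exploit the increasing witness property directly: if several purported witnesses are on the table, the largest of them must itself be a correct witness (by the monotonicity $|A_n|\subseteq|A_m|$), regardless of which candidate is the true one. Combining this with the regularity of $\Pi^0_1$ subobjects will then let us reconstruct a full name from the $X$-name alone.

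Concretely, I would fix a uniformly $\Pi^0_1$ sequence $(A_n)_{n\in\om}$ witnessing the increasing witness property, so that $A\equiv A':=\biguplus_{n\in\om}A_n$ with $|A_n|\subseteq|A_m|$ whenever $n\leq m$. A name of $x\in A'$ is of the form $\langle n,q\rangle$, where $q$ is an $A_n$-name of $x$. I would then prove amalgamability of $A'$ (which suffices, since amalgamability is defined up to $\equiv$). Given an $X$-name $p$ of $x\in|A'|$ and candidates $p_1,\dots,p_k$ of which at least one is correct, parse each as $p_i=\langle n_i,q_i\rangle$ and set $m=\max_{i\leq k}n_i$. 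For the correct index $j$ one has $x\in A_{n_j}\subseteq A_m$, so $x\in|A_m|$ is guaranteed purely from the increasing witness property, without needing to identify which candidate was correct.

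The remaining task is to manufacture an $A_m$-name of $x$ uniformly in $m$ from the ambient $X$-name $p$. This is where I expect the real work. Uniform $\Pi^0_1$-ness of $(A_n)_{n\in\om}$ provides a closed subobject $B\mono\om\times X$ whose $n$th projection is $A_n$. By Observation \ref{obs:open-closed-is-regular}, $B$ is regular, so there is a realizer converting $(\om\times X)$-names of points of $|B|$ into $B$-names; feeding $(m,p)$ produces a $B$-name $b$ of $(m,x)$, and then $\langle p,b\rangle$ is an $A_m$-name of $x$ (following the pullback description of the projection). Output $F(p;p_1,\dots,p_k):=\langle m,\langle p,b\rangle\rangle$.

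The main obstacle is precisely this uniform-in-$m$ conversion from an $X$-name to an $A_m$-name, since the $A_n$'s are not given as genuine subspaces but as subobjects with potentially nontrivial naming. The resolution is the regularity of closed subobjects applied to the uniform witness $B\mono\om\times X$; once that is observed, the amalgamation procedure requires nothing beyond extracting $\max_i n_i$ and invoking the $B$-regularity realizer. Notably, the procedure never inspects the $q_i$'s of the candidates, which is fortunate since incorrect candidates may contain arbitrary garbage there.
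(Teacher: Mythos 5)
Your proof is correct and follows essentially the same route as the paper's: parse the candidates, take $m=\max_i n_i$, and use the monotonicity $|A_{n_j}|\subseteq|A_m|$ so that $(m,\text{name of }x)$ is a valid name in $\biguplus_n A_n$ regardless of which candidate was correct. The only difference is that you spell out, via the regularity of the uniform closed witness $B\mono\om\times X$, how an $A_m$-name is recovered from the ambient $X$-name uniformly in $m$ — a step the paper leaves implicit when it writes ``$(m,p)$ is also a name.''
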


\begin{proof}
Assume that $A$ has the increasing witness property via $A\equiv\biguplus_{n\in\om}A_n$.
Let $p$ be a name of $x\in\biguplus_nA_n$.
Given $(n_1,p_1),\dots,(n_k,p_k)$, if at least one of them, say $(n_i,p_i)$, is a correct name of $x\in\biguplus_nA_n$ then $x\in |A_{n_i}|$, and thus $x\in |A_m|$ for any $m\geq n_i$ by the increasing witness property.
Hence, $(m,p)$ is also a name of $x\in\biguplus_nA_n$.
Thus, if we put $m=\max_{j\leq k}n_j$ then $(m,p)$ is a name of $x\in\biguplus_nA_n$.
\end{proof}

\begin{prop}\label{prop:half-hard-amalgamable}
If $A\mono\om^\om$ is half $\Sigma^0_2$-hard, then $B\leq_{\sf m}A$ for any amalgamable $\Sigma^0_2$ subobject $B\mono\om^\om$.
\end{prop}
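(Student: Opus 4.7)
The plan is to factor the reduction $B \leq_m A$ through ${\sf Half}(\psi)$, where $\psi$ is a $\Sigma^0_2$ formula defining $B$. Since $B$ is a $\Sigma^0_2$ subobject of $\om^\om$, it is equivalent to one of the form $\eval{x:\exists a\forall z\,\theta(a,z,x)}$ for some decidable $\theta$ (obtained by unpacking a uniformly $\Pi^0_1$ sequence $(B_n)_n$ with $B\equiv\biguplus_nB_n$ and using that each closed subobject is cut out by a decidable relation after applying the tracker of its co-characteristic map). So without loss of generality a name of $x\in B$ is a pair $\pair{a,p}$ where $p$ is an $\om^\om$-name of $x$ and $a$ witnesses $\forall z\,\theta(a,z,x)$.

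The first step is to show $B\leq_m{\sf Half}(\psi)$ via the identity morphism $\varphi={\rm id}_{\om^\om}$. The inner reduction $r_-$ sends a $B$-name $\pair{a,p}$ to the triple $\pair{a,a,p}$, which is a valid ${\sf Half}(\psi)$-name since $a$ trivially witnesses both halves. For the outer reduction $r_+$, suppose we are given an $\om^\om$-name $p$ of $x$ together with a ${\sf Half}(\psi)$-name $\pair{a,b,q}$ of $x\in|B|$. By definition of ${\sf Half}(\psi)$, at least one of $a$ or $b$ satisfies $\forall z\,\theta(\cdot,z,x)$, so at least one of $\pair{a,p}$, $\pair{b,p}$ is a correct $B$-name. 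Here is where amalgamability enters: by Proposition hypotheses there exists $B'\equiv B$ with an amalgamation map $F$ that, given an $\om^\om$-name of $x$ and the candidate list $(\pair{a,p},\pair{b,p})$ (converted along $B\equiv B'$), returns a correct $B'$-name of $x$, which we convert back to a $B$-name. This yields the required $r_+$, establishing $B\leq_m{\sf Half}(\psi)$.

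The second step is immediate: since $A$ is half $\Sigma^0_2$-hard, by definition ${\sf Half}(\psi)\leq_mA$. By transitivity of $\leq_m$ (established as the preorder property), $B\leq_mA$, completing the proof.

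The main obstacle is just the bookkeeping in the first step: one must check that the $\Sigma^0_2$-formula form of $B$ really does yield names of the shape $\pair{a,p}$ compatible with ${\sf Half}(\psi)$, and that the amalgamation operation, which is defined relative to some $B'\equiv B$, can be composed with the equivalences $B\equiv B'$ and $B'\equiv B$ without losing the reduction structure. Once names are aligned, the argument is essentially a direct application of the definitions.
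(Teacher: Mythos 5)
Your proof is correct and follows essentially the same route as the paper: reduce $B$ to ${\sf Half}(\psi)$ by sending a witness $a$ to the pair $(a,a)$, use amalgamability of $B$ to recover a correct witness from the two candidates in a ${\sf Half}(\psi)$-witness, and then compose with ${\sf Half}(\psi)\leq_mA$ from half $\Sigma^0_2$-hardness. The extra bookkeeping you mention (passing through $B'\equiv B$ and aligning name formats) is exactly the detail the paper leaves implicit, and it goes through as you describe.
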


\begin{proof}
Let $\psi(x)\equiv\exists a\theta(a,x)$ be a $\Sigma^0_2$ formula defining $B$, where $\theta$ is $\Pi^0_1$.
Then, for each witness $a$ for $x\in B$, $(a,a)$ is a witness for $x\in{\sf Half}(\psi)$.
Conversely, if $(a,b)$ is a witness for $x\in{\sf Half}(\psi)$, then either $a$ or $b$ is a witness for $x\in B$.
As $B$ is amalgamable, from the information on $(a,b,x)$, one can compute a correct witness for $x\in B$.
Hence, $B\leq_{\sf m}{\sf Half}(\psi)\leq_{\sf m}A$.
\end{proof}

By Observation \ref{obs:increasing-amalgamable} and Proposition \ref{prop:half-hard-amalgamable}, we get ${\sf BddSeq}_\om\leq_{\sf m}{\sf HalfTruth}_{\Sigma^0_2}$.
We next show ${\sf HalfTruth}_{\Sigma^0_2}\leq_{\sf m}{\sf DisConn}$.

\begin{prop}\label{prop:disconn-hard}
${\sf DisConn}$ is half $\Sigma^0_2$-hard.
\end{prop}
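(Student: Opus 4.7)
The plan is to reduce ${\sf Half}(\psi)\leq_m {\sf DisConn}$ for every $\Sigma^0_2$ formula $\psi(x)\equiv\exists a\forall z\,\theta(a,z,x)$ by building, from $x$, a ``star graph with twin attachments'' whose connectedness mirrors whether $a$ is a true witness for $\psi$.

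First I would define $\varphi(x)$ as follows. Take vertices $V=\{\star\}\cup\{v_a^0,v_a^1:a\in\om\}$. As we read $x$ stage by stage, for each $a$ we search for a $z$ refuting $\theta(a,z,x)$; as soon as such $z$ is found we enumerate the two edges $\{v_a^0,\star\}$ and $\{v_a^1,\star\}$ into $E$. This is computable in $x$, so $\varphi\colon\om^\om\to\om^\om$ is a morphism. By construction, for every $a\in\om$ and every $i\in\{0,1\}$, the vertex $v_a^i$ lies in the component of $\star$ iff $a$ is \emph{not} a witness for $\psi$, and $v_a^i$ is isolated iff $\forall z\,\theta(a,z,x)$ holds. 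In particular, $\varphi(x)\in|{\sf DisConn}|$ iff some $v_a^i$ is isolated iff $\psi(x)$ holds iff $x\in|{\sf Half}(\psi)|$, establishing clause~(1) of Definition~\ref{def:represented-many-one-reducibility}.

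For the inner reduction $r_-$, given a ${\sf Half}(\psi)$-name $(a,b,x)$ of $x\in|{\sf Half}(\psi)|$, I would output the pair $(v_a^0,v_b^1,\varphi(x))$. By definition of ${\sf Half}(\psi)$, at least one of $a,b$ is a true witness, hence at least one of $v_a^0,v_b^1$ is isolated; in every case (both witnesses, only $a$, only $b$, and even $a=b$ since then $v_a^0\ne v_a^1$ are both isolated), $v_a^0$ and $v_b^1$ land in different connected components of $\varphi(x)$. Thus $(v_a^0,v_b^1,\varphi(x))$ is a valid ${\sf DisConn}$-name of $\varphi(x)$.

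For the outer reduction $r_+$, given a $\om^\om$-name of $x$ together with a ${\sf DisConn}$-name $(u,w,\varphi(x))$ certifying that $u,w$ lie in distinct components, I would inspect the vertex labels. If $u=\star$ then $w=v_a^i$ for some $a,i$, and since $w$ is not in the component of $\star$ the index $a$ is a genuine witness for $\psi$, so I return $(a,a,x)$; the case $w=\star$ is symmetric. If $u=v_a^i$ and $w=v_b^j$, then in every subcase at least one of $a,b$ must be a true witness (otherwise both $v_a^i,v_b^j$ would be attached to $\star$ and hence be connected to each other, contradicting disconnectedness), so $(a,b,x)$ is a valid ${\sf Half}(\psi)$-name.

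The only real subtlety is the degenerate case $a=b$ in the forward map, which is precisely why I split each ``slot'' into a twin pair $v_a^0,v_a^1$ rather than using a single vertex; with that device in place the three conditions of Definition~\ref{def:represented-many-one-reducibility} become routine to verify, and the whole construction is clearly realizable in ${\sf K}_2$/${\sf KV}$. Since $\psi$ was an arbitrary $\Sigma^0_2$ formula, this shows ${\sf DisConn}$ is half $\Sigma^0_2$-hard.
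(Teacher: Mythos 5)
Your witness-translation logic (the twin vertices $v_a^0,v_a^1$, the hub $\star$, and the case analysis for $r_-$ and $r_+$) is sound and is essentially the same idea as the paper's construction, which uses a special hub vertex $v_0$ and reduces ${\sf HalfTruth}_{\Sigma^0_2}$ rather than an arbitrary ${\sf Half}(\psi)$ directly (an immaterial difference, since ${\sf HalfTruth}_{\Sigma^0_2}$ is complete for the ${\sf Half}(\psi)$'s). However, there is a genuine gap at the step ``this is computable in $x$, so $\varphi\colon\om^\om\to\om^\om$ is a morphism.'' In the paper, ${\sf DisConn}$ is defined over the subset presentation of graphs coded by \emph{characteristic functions} $\chi_V,\chi_E$. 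In your graph the bit $\chi_E(\{v_a^i,\star\})$ equals $1$ iff $\exists z\,\neg\theta(a,z,x)$, which is a $\Sigma^0_1$, non-clopen condition on $x$: at an $x$ with $\forall z\,\theta(a,z,x)$ the bit must be $0$, yet every finite prefix of $x$ extends to some $x'$ for which the bit must be $1$. So $x\mapsto\chi_{E}$ is not continuous (equivalently, you can never safely commit the value $0$ at a finite stage), and your $\varphi$ is not a morphism into the represented space over which ${\sf DisConn}$ lives. ``Enumerating an edge as soon as a refutation is found'' only defines a map into an enumeration-style (positive-information) coding of graphs, which is not the coding fixed in the definition of ${\sf DisConn}$.

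This is precisely the difficulty the paper's construction is designed to avoid: instead of a single fixed edge whose presence encodes a $\Sigma^0_1$ fact, each index $n$ generates a stage-indexed growing path $(n,0),(n,1),(n,2),\dots$, and only when a refutation is actually observed at some finite stage $s$ is the terminal vertex $(n,s)$ attached to the hub $v_0$. Then every individual bit of $\chi_E$ (is $\{(n,s),(n,s+1)\}$ an edge? is $\{(n,s),v_0\}$ an edge?) is decided by a finite prefix of $x$, so the graph code is genuinely computable/continuous in $x$, while the component structure still records ``$a$ is a true witness'' exactly as in your argument. Your proof can be repaired by replacing each twin $v_a^i$ with such a stage-indexed path (attached to $\star$ only at the stage a refuting $z$ appears); after that change, your $r_-$ and $r_+$ verifications go through essentially verbatim, with ``$v_a^i$ is isolated'' replaced by ``the $a$-th path forms its own component.''
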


\begin{proof}
We show that ${\sf HalfTruth}_{\Sigma^0_2}\leq_{\sf m}{\sf DisConn}$.
Given $x\in\om^\om$, we construct a graph $G=(V,E)$.
First put a special vertex $v_0\in V$.
As long as $x_n=0^\infty$ is true, we enumerate a path $(n,0)\to (n,1)\to (n,2)\to\dots$ into $E$.
If we see $x_n\not=0^\infty$ at some stage $s$, terminate the construction of this path at $(n,s)$ and then enumerate the edge $(n,s)\to v_0$ into $E$.
Note that if $x_n=0^\infty$ holds then $\{(n,i):n\in\om\}$ is a connected component in $G$; otherwise $(n,i)$ is connected to $v_0$.

Now assume that $(n,m)$ be a witness for $x\in{\sf HalfTruth}_{\Sigma^0_2}$.
Then either $x_n=0^\infty$ or $x_m=0^\infty$ holds.
If $n=m$ then $x_n=0^\infty$, so $(n,0)$ is not connected to $v_0$ by a finite path, so $\pair{(n,0),v_0}$ is a witness for $G\in{\sf DisConn}$.
Assume $n\not=m$.
If $x_n=0^\infty$ then $\{(n,i):i\in\om\}$ is a connected component in $G$ which does not contain $(m,0)$ since $n\not=m$.
If $x_n\not=0^\infty$ then we must have $x_m=0^\infty$, so $\{(m,i):i\in\om\}$ is a connected component which does not contain $(n,0)$.
Thus, $(n,0)$ and $(m,0)$ are not connected by a finite path; that is, $\pair{(n,0),(m,0)}$ is a witness for $G\in{\sf DisConn}$.

Conversely, assume that a witness for $G\in{\sf DisConn}$ is given.
First consider the case that the witness is of the form $\pair{(n,i),(m,j)}$.
If both $x_n\not=0^\infty$ and $x_m\not=0^\infty$ hold then both $(n,i)$ and $(m,j)$ are connected to $v_0$.
This is impossible.
Hence, either $x_n=0^\infty$ or $x_m=0^\infty$ holds; that is, $(n,m)$ is a witness for $x\in{\sf HalfTruth}_{\Sigma^0_2}$.
Next, if the witness is of the form $\pair{(n,i),v_0}$, then $(n,i)$ is not connected to $v_0$, so we must have $x_n=0^\infty$.
Therefore, $(n,n)$ is a witness for $x\in{\sf HalfTruth}_{\Sigma^0_2}$.
\end{proof}

\subsubsection{Separation}

We will see that these levels of $\Sigma^0_2$ subobjects have different strengths.
In other words, our goal here is to prove the following:

\begin{theorem}\label{thm:basic-sigma-2-subobject-proper}
${\sf Fin}<_{\sf m}{\sf BddSeq}<_{\sf m}{\sf HalfTruth}_{\Sigma^0_2}<_{\sf m}{\sf DisConn}<_{\sf m}{\sf Truth}_{\Sigma^0_2}$.
\end{theorem}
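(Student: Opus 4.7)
The plan is to dispose of the four reductions by appealing to earlier results, then prove the four strict separations, each by a distinct property-preservation or continuity argument.

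For the upper bounds: ${\sf Fin}\leq_m{\sf BddSeq}_\om$ combines Proposition~\ref{prop:simga02-uniquewitness-to-increasingwitness} with Theorem~\ref{thm:bddseq-complete-increasing-witness-property}; ${\sf BddSeq}_\om\leq_m{\sf HalfTruth}_{\Sigma^0_2}$ combines Observation~\ref{obs:increasing-amalgamable} (noting ${\sf BddSeq}_\om$ has the increasing witness property) with Proposition~\ref{prop:half-hard-amalgamable}; ${\sf HalfTruth}_{\Sigma^0_2}\leq_m{\sf DisConn}$ is immediate since ${\sf HalfTruth}_{\Sigma^0_2}$ is itself a ${\sf Half}(\psi)$ (for the universal $\Sigma^0_2$ formula) and ${\sf DisConn}$ is half $\Sigma^0_2$-hard by Proposition~\ref{prop:disconn-hard}; finally ${\sf DisConn}\leq_m{\sf Truth}_{\Sigma^0_2}$ is $\Sigma^0_2$-completeness.

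For ${\sf BddSeq}_\om\not\leq_m{\sf Fin}$, I would assume the reduction and, via Observation~\ref{obs:sigma02-witnessproperty-downclosed}(1) together with Theorem~\ref{thm:fin-complete-unique-witness-property}, conclude ${\sf BddSeq}_\om\equiv\biguplus_nA_n$ with uniformly $\Pi^0_1$ pairwise disjoint $A_n$. Taking $n_0$ with $0^\infty\in A_{n_0}$, the outer reduction induces a continuous $B_{n_0}\colon A_{n_0}\to\om$ with $B_{n_0}(x)\geq\sup x$; let $b^*=B_{n_0}(0^\infty)$ and use continuity to pick $d$ with $[0^d]\cap|A_{n_0}|\subseteq B_{n_0}^{-1}(b^*)$. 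Enlarging the modulus of the inner reduction at $(b^*+1,0^\infty)$ to some $c\geq d$, every valid name $(b^*+1,y)$ with $y\in[0^c]$ is sent to $n_0$; taking $y=0^c(b^*+1)0^\infty$ gives $y\in A_{n_0}\cap[0^d]$, forcing $B_{n_0}(y)=b^*$ while $B_{n_0}(y)\geq\sup y=b^*+1$, a contradiction. For ${\sf HalfTruth}_{\Sigma^0_2}\not\leq_m{\sf BddSeq}_\om$, I would first verify that amalgamability is $\leq_m$-downward closed (feed the outer-reduction images into $B$'s amalgamator, then apply the inner reduction) and that ${\sf BddSeq}_\om$ is amalgamable (take the maximum candidate bound). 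It then suffices to show ${\sf HalfTruth}_{\Sigma^0_2}$ is not amalgamable: a continuous amalgamator on candidates $p_1=(0,1,x)$, $p_2=(2,3,x)$ must commit to an output pair $(a,b)$ after a finite read of $(x,p_1,p_2)$, and one then completes $x$ so that some $n\in\{0,1,2,3\}\setminus\{a,b\}$ has $x_n=0^\infty$ and the $\{a,b\}$-indexed sequences are eventually nonzero (late enough to respect what the amalgamator has already read), invalidating the output while keeping at least one candidate valid.

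For ${\sf DisConn}\not\leq_m{\sf HalfTruth}_{\Sigma^0_2}$, I would interpose the tartan ${\sf L}$ of Example~\ref{exa:tartan-difference-hierarchy}. First, ${\sf L}\leq_m{\sf DisConn}$ via the graph on $\{a,b,c,d\}$ carrying the permanent edge $b$--$c$ and conditional edges $a$--$b$, $c$--$d$ enumerated upon observing nonzero bits of $x,y$ respectively; any disconnected pair involves $a$ or $d$, which decodes to the ${\sf L}$-witness. Second, ${\sf L}\not\leq_m{\sf HalfTruth}_{\Sigma^0_2}$: a putative $r_+$ commits to some $i_0\in\{0,1\}$ after reading a finite prefix of its input at $((0^\infty,0^\infty),(a^*,b^*))$, and then perturbing the $i_0$-coordinate of $(x,y)$ at a sufficiently late position flips the only valid ${\sf L}$-witness away from $i_0$ without disturbing the ${\sf HalfTruth}$-witness, violating continuity. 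For ${\sf Truth}_{\Sigma^0_2}\not\leq_m{\sf DisConn}$ I would use an analogous perturbation: pick $x^*$ with exactly $x^*_0=x^*_1=0^\infty$, let $r_+$ on $(x^*,(a^*,b^*))$ commit to some $n_0\in\{0,1\}$ from a finite prefix, then perturb $x^*_{n_0}$ at a late coordinate to be nonzero while $(a^*,b^*)$ persists as a disconnected pair of $\varphi(\tilde x)$, forcing $r_+$ to output $n_0$ when the only valid witness is $1-n_0$.

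The principal obstacle is in (c) and (d): the perturbation must simultaneously invalidate the target ${\sf L}$- or ${\sf Truth}$-witness and preserve the validity of the fixed ${\sf HalfTruth}$- or ${\sf DisConn}$-witness under the continuous map $\varphi$. Since disconnectedness and all-zeroness are $\Pi^0_1$, one can place the perturbation beyond the continuity modulus of $\varphi$ at the chosen disconnected pair or coordinate; the care required is to align this with the modulus of $r_+$ so that the perturbation simultaneously lies past what $r_+$ reads and affects only coordinates that $\varphi$ does not use to produce the relevant edge or sequence-value.
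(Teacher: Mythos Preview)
Your reductions and the separations (a) and the core of (b) are aligned with the paper, but (c) and (d) contain genuine errors.

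For (c), the claimed reduction ${\sf L}\leq_m{\sf DisConn}$ is false; indeed the paper proves exactly the opposite, ${\sf L}\not\leq_m{\sf DisConn}$ (Proposition~\ref{prop:disconn-weak}), and uses precisely this to establish (d). In your four-vertex graph the pair $(a,d)$ is a valid ${\sf DisConn}$-witness whenever $(x,y)\in|{\sf L}|$, yet it carries no information about \emph{which} of $x,y$ equals $0^\infty$; since $r_+$ must handle \emph{every} ${\sf DisConn}$-name of $\varphi(x,y)$, not just those in the image of $r_-$, this pair defeats the outer reduction. The paper obtains ${\sf DisConn}\not\leq_m{\sf HalfTruth}_{\Sigma^0_2}$ instead by a direct and rather intricate combinatorial argument (Proposition~\ref{prop:disconn-hard2}).

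For (d), and also for your sketch of ${\sf L}\not\leq_m{\sf HalfTruth}_{\Sigma^0_2}$, the misconception is that a $\Pi^0_1$ property (disconnectedness of a fixed pair, or a fixed coordinate being all-zero) can be preserved by placing the perturbation ``beyond the continuity modulus of $\varphi$.'' This is backwards: continuity guarantees that a $\Sigma^0_1$ observation, once made, persists under late input changes, but a $\Pi^0_1$ property can be destroyed by arbitrarily late changes---new edges may appear in $\varphi(\tilde x)$ connecting $a^*$ to $b^*$, or $\varphi(\tilde x)_{a^*}$ may acquire a nonzero bit. You have no mechanism to prevent this. The paper's proof of ${\sf L}\not\leq_m{\sf DisConn}$ confronts exactly this obstacle and resolves it only through a careful case analysis on the values $r_+(\{u^i_k,u^j_\ell\},x)$ over all mixed pairs drawn from $r_-(0,x)$ and $r_-(1,x)$.

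A smaller gap in (b): amalgamability is not obviously $\leq_m$-downward closed, since $r_-$ is only a partial realizable function and may diverge on invalid candidate names, so one cannot blindly feed $r_-(p_1),\dots,r_-(p_n)$ into $F_B$. The paper avoids this by proving directly that ${\sf HalfTruth}_{\Sigma^0_2}\not\leq_m A$ for amalgamable $A$ (Proposition~\ref{prop:half-truth-nonamalgamable}), applying $r_-$ only to known-valid witnesses $(0,1,x)$ and $(2,3,x)$---essentially your argument, but correctly framed.
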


For this purpose, we first show a technical lemma.
An element $x\in |A|$ of a subobject $A\mono X$ is {\em splittable} if for any $\ell\in\om$ and $j,k$ there exists $x'\in|A|$ extending $x\upto\ell$ such that $j$ is no longer a witness for $x'\in A$, but if $k\not=j$ is a witness for $x\in A$, then $k$ is still a witness for $x'\in A$.

\begin{lemma}[Split Lemma]\label{lem:split-lemma}
For subobjects $A\mono X$ and $B\mono Y$, assume that $A\leq_{\sf m}B$ holds via $\varphi,r_-,r_+$ (in the sense of Example \ref{exa:computable-Levin-reducibility-b}).
If $k$ is a witness for $x\in A$ for some splittable element $x\in|A|$ then $r_+(r_-(k,x),x)=k$ holds.
\end{lemma}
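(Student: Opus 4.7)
The plan is to argue by contradiction, using the realizability (and hence continuity / finite-use) of trackers together with the defining property of splittability. Suppose, towards contradiction, that $k$ is a witness for the splittable $x\in|A|$, but the $A$-name $r_+(r_-(k,x),x)$ carries a witness part $j$ with $j\neq k$.

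The first step exploits the fact that each of the trackers involved (for $r_-$, for $r_+$, and for $\varphi$) is a partial realizable function on ${\tt Code}$, so its output on a given input depends only on a finite portion of that input. Composing the pipeline $x\mapsto r_-(k,x)\mapsto r_+(r_-(k,x),x)$, the integer $j$ appearing in the output depends only on a finite initial segment of the $X$-name of $x$. Pick $\ell\in\om$ large enough that $j$ is already determined from $x\upto\ell$ together with the fixed first argument $k$.

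The second step applies splittability at this $\ell$ with the two integers $j$ and $k$: there exists $x'\in|A|$ extending $x\upto\ell$ such that $j$ is no longer a witness for $x'\in A$, while $k\neq j$ is still a witness for $x'\in A$. Thus $(k,x')$ is a legitimate $A$-name of $x'$.

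For the contradiction, feed $(k,x')$ through the reduction pipeline. Because the trackers depend only on finitely much of their input and $x'$ agrees with $x$ on the first $\ell$ coordinates, we still get that the witness part of $r_+(r_-(k,x'),x')$ equals $j$. But by the choice of $x'$, $j$ is not a witness for $x'\in A$, so $r_+(r_-(k,x'),x')$ is not a valid $A$-name of $x'$, violating clause~(3) of Definition~\ref{def:represented-many-one-reducibility}. Hence the original assumption fails and the witness in $r_+(r_-(k,x),x)$ must be $k$. The only delicate point is the bookkeeping in step one, where one must choose $\ell$ uniformly large enough to determine the final output through the whole composition $r_+\circ(r_-\times\mathrm{id})$; this is routine given that each constituent tracker is continuous/finitary, but it is the place where sloppiness would break the argument.
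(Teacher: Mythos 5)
Your proposal is correct and follows essentially the same argument as the paper: assume the returned witness is $j\neq k$, use continuity of the trackers to fix the computation on a finite initial segment of $x$, then use splittability to extend to an $x'$ where $j$ fails but $k$ survives, contradicting condition (3) of the reducibility. The paper merely makes your "uniform $\ell$" bookkeeping explicit by choosing $\ell$ for $r_+$ first and then $\ell'\geq\ell$ so that $r_-(k,x\upto\ell')$ extends $r_-(k,x)\upto\ell$, which is exactly the routine step you flagged.
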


\begin{proof}
Let $B\mono Y$ be given, and assume that $A\leq_{\sf m}B$ via $\varphi,r_-,r_+$.
If $j$ is a witness for $x\in A$, then $r_-(j,x)$ is a witness for $\varphi(x)\in B$.
Hence, $r_+(r_-(k,x),x)$ must be a witness for $x\in A$.
Suppose for the sake of contradiction that we have $r_+(r_-(k,x),x)=j\not=k$ for some $k$.

By continuity, we have $r_+(r_-(k,x)\upto\ell,x\upto\ell)=j$ for sufficiently large $\ell$.
Similarly, for sufficiently large $\ell'\geq\ell$, the sequence $r_-(k,x\upto\ell')$ extends $r_-(k,x)\upto\ell$.
By splittability of $x\in A$, there exists $x'\in |A|$ extending $x\upto\ell'$ such that $j$ is not a witness for $x'\in A$, but $k$ is a witness for $x'\in A$.
By our choice of $\ell'$, we have $r_+(r_-(k,x'),x')=j$.
As $k$ is a witness for $x'\in A$, $r_-(k,x')$ is a witness for $\varphi(x')\in B$, so $r_+(r_-(k,x'),x')=j$ must be a witness for $x'\in A$, which leads to a contradiction.
\end{proof}

Abstractly speaking, this roughly says that, if $x$ is splittable, $r_-(\cdot,x)$ and $r_+(\cdot,x)$ form a section-retraction pair; in particular, $r_-(\cdot,x)$ is a split mono.
We first separate ${\sf BddSeq}$ from ${\sf Fin}$.

\begin{prop}\label{prop:bddseq-amalgamable-not-uwp}
${\sf BddSeq}_\om$ does not have the unique witness property.
\end{prop}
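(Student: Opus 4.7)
The plan is to suppose for contradiction that ${\sf BddSeq}_\om$ has the unique witness property, witnessed by a decomposition ${\sf BddSeq}_\om\equiv\biguplus_{n\in\om}A_n$ with the $A_n$ uniformly $\Pi^0_1$ and the $|A_n|$ pairwise disjoint. Since each $A_n$ is $\Pi^0_1$ (hence regular by Observation~\ref{obs:open-closed-is-regular}), a name of $x\in A_n$ is just an $\om^\om$-name of $x$, so a name of $x\in\biguplus_n A_n$ is a pair $(n,x)$ with $x\in|A_n|$. Let $F$ track the inclusion ${\sf BddSeq}_\om\hookrightarrow\biguplus_n A_n$ and $G$ track the reverse inclusion, so that $F$ sends a name $(b,x)$ (with $b\geq\max x$) to $(n(x),x)$ where $x\in A_{n(x)}$, while $G$ sends $(n,x)$ (with $x\in A_n$) to $(b_G(n,x),x)$ with $b_G(n,x)\geq\max x$.

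The contradiction is built around a ``spike'' perturbation of the constant zero sequence $\mathbf{0}=(0,0,\ldots)$. First I would extract fixed data from $\mathbf{0}$ alone: let $n_0$ be the unique index with $\mathbf{0}\in A_{n_0}$, set $b_0:=b_G(n_0,\mathbf{0})$, and, using continuity of $G$, let $L'$ be a prefix length after which $G$'s first output $b_0$ is already committed on reading $(n_0,0,0,\ldots,0)$ of length $L'$. Now choose any bound $B>b_0$, and, using continuity of $F$ applied to $(B,\mathbf{0})$, let $L$ be a prefix length after which $F$'s first output $n_0$ is committed; note that $L$ may depend on $B$, but that is harmless.

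Setting $M:=\max(L,L')$, I then form the spike $y\in\om^\om$ defined by $y(M-1):=B$ and $y(i):=0$ for $i\neq M-1$. Then $y$ is bounded with $\max y=B$, so $(B,y)$ is a legitimate ${\sf BddSeq}_\om$-name. Since $(B,y)$ agrees with $(B,\mathbf{0})$ on the first $L$ symbols, the continuity modulus for $F$ forces the first component of $F((B,y))$ to equal $n_0$; as $F$ must produce a valid name of $y\in\biguplus_n A_n$, we conclude $y\in A_{n_0}$. Hence $(n_0,y)$ is a valid $\biguplus_n A_n$-name of $y$; but $(n_0,y)$ agrees with $(n_0,\mathbf{0})$ on the first $L'$ symbols, so by the continuity modulus for $G$, the first component of $G((n_0,y))$ must be $b_0$. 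Since $G$ must output a valid ${\sf BddSeq}_\om$-name of $y$, this forces $b_0\geq\max y=B$, contradicting $B>b_0$.

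The only real subtlety is the order of quantifiers: the number $b_0$ and the modulus $L'$ are computed from $\mathbf{0}$ alone, so they are fixed before $B$ is chosen, which is what allows $B$ to be taken strictly greater than $b_0$ regardless of how $L$ might blow up as a function of $B$. The single spike $y$ is then engineered so that its name agrees with $\mathbf{0}$'s on a long enough prefix to satisfy \emph{both} continuity moduli simultaneously; this simultaneous agreement is exactly what lets the $F$- and $G$-continuity arguments chain on the same witness and force the contradiction.
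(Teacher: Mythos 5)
Your proof is correct and is essentially the paper's own argument: the disjointness of the $|A_n|$ forces the tracker of ${\sf BddSeq}_\om\subseteq\biguplus_nA_n$ to output the fixed index $n_0$ no matter which bound is fed in, continuity then locks the bound returned by the reverse tracker before any perturbation, and a value exceeding that bound placed beyond the relevant moduli gives the contradiction — the paper runs the same mechanism with the composite of the two trackers on $(b+1,x)$ and a tail perturbation instead of your single spike anchored at $0^\infty$. The only nitpick is the spike position: place it at an index at least $\max(L,L')$ (rather than $M-1$) so that it is unambiguously outside both committed prefixes; this is a trivial adjustment.
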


\begin{proof}
Suppose that ${\sf BddSeq}_\om$ has the unique witness property via ${\sf BddSeq}_\om\equiv\biguplus_{n\in\om}A_n$.
We may assume that $A_n$ is a subspace of $\om^\om$.
Let $i,j$ be trackers of ${\sf BddSeq}_\om\subseteq\biguplus_nA_n$ and $\biguplus_nA_n\subseteq{\sf BddSeq}_\om$.
For any $x\in|{\sf BddSeq}_\om|$ there exists a unique $n$ such that $x\in |A_n|$.
Thus, if $(m,x)$ is a name of $x\in {\sf BddSeq}_\om$, we must have $i(m,x)=(n,x)$.
Moreover, as $(n,x)$ is a name of $x\in\biguplus_{n\in\om}A_n$, $j(n,x)$ is a name of $x\in{\sf BddSeq}_\om$, which is of the form $(b,x)$.

Of course, $(b+1,x)$ is also a name of $x\in{\sf BddSeq}_\om$, and we have $j\circ i(b+1,x)=j(n,x)=(b,x)$.
By continuity of $i$ and $j$, the first value $b$ is determined after reading $b+1$ and a finite initial segment $x\upto t$ of $x$.
However, $x\upto t$ has an extension which is bounded by $b+1$, but not by $b$.
For instance, consider an extension $y$ of $x$ such that $y(k)=b$ for any $k\geq t$.
Again, $\langle b+1,y\rangle$ is a correct name of $y\in{\sf BddSeq}_\om$, but nevertheless the first value $j\circ i(b+1,y)$ must be $b$, which is not an upper bound of $y$.
In particular, $j\circ i(b+1,y)$ cannot be a name of $y\in{\sf BddSeq}_\om$.
\end{proof}

We next separate ${\sf HalfTruth}_{\Sigma^0_2}$ from ${\sf BddSeq}$.

\begin{prop}\label{prop:half-truth-nonamalgamable}
If a $\Sigma^0_2$ subobject $A\mono\om^\om$ is amalgamable, then ${\sf HalfTruth}_{\Sigma^0_2}\not\leq_{\sf m}A$.
In particular, ${\sf HalfTruth}_{\Sigma^0_2}$ is not amalgamable.
\end{prop}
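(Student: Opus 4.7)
The plan is to derive a contradiction by exhibiting four elements of $|{\sf HalfTruth}_{\Sigma^0_2}|$ that the combined reduction-plus-amalgamation pipeline cannot simultaneously handle. Suppose ${\sf HalfTruth}_{\Sigma^0_2}\leq_m A$ via $\varphi,r_-,r_+$ and $A$ is amalgamable via $F$ acting on some $A'\equiv A$. Assemble a pipeline $\Phi$: on input $x\in(\om^\om)^\om$, apply $r_-$ to the two candidate names $\langle 0,2,x\rangle$ and $\langle 1,3,x\rangle$, push the outputs through the morphism $A\to A'$ given by the equivalence, call $F$ on these candidates together with the $\om^\om$-name of $\varphi(x)$ obtained from a tracker of $\varphi$, push the result back through $A'\to A$ to obtain an $A$-name $q$, and output $r_+(x,q)$. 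Each ingredient is realizable, so $\Phi$ is partial continuous in the code of $x$.

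Start with $x^*=((0^\infty)_n)_{n\in\om}$, for which every pair is a valid witness. Both candidate names are valid ${\sf HalfTruth}_{\Sigma^0_2}$-names, so both $r_-$-outputs are valid $A$-names of $\varphi(x^*)$; amalgamability then yields a valid $A'$-name, and $r_+$ returns a valid ${\sf HalfTruth}_{\Sigma^0_2}$-name $\langle a^*,b^*,x^*\rangle$. By continuity, $(a^*,b^*)$ is determined by a finite prefix $x^*\upto L$ of the code of $x^*$. For each $j\in\{0,1,2,3\}$, choose $x^{(j)}$ whose code agrees with $x^*$'s on the first $L$ bits, satisfies $x^{(j)}_j=0^\infty$, and has $x^{(j)}_i\neq 0^\infty$ for every $i\neq j$; this is possible because each individual coordinate of the flat code can be altered beyond any fixed position. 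For $j\in\{0,2\}$ only $\langle 0,2,x^{(j)}\rangle$ is a valid name, and for $j\in\{1,3\}$ only $\langle 1,3,x^{(j)}\rangle$ is — but in every case at least one of the two candidates is valid, so amalgamability still forces $\Phi(x^{(j)})$ to be a valid ${\sf HalfTruth}_{\Sigma^0_2}$-name $\langle a^{(j)},b^{(j)},x^{(j)}\rangle$. By continuity and the agreement of the first $L$ bits, $(a^{(j)},b^{(j)})=(a^*,b^*)$ for all four $j$, and validity together with the construction of $x^{(j)}$ (only coordinate $j$ is $0^\infty$) forces $j\in\{a^*,b^*\}$. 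Hence $\{0,1,2,3\}\subseteq\{a^*,b^*\}$, which is absurd. The ``in particular'' statement follows as the $A={\sf HalfTruth}_{\Sigma^0_2}$ case.

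The main delicate point is that, on the instances $x^{(j)}$ where one of the two candidates is an \emph{invalid} ${\sf HalfTruth}_{\Sigma^0_2}$-name, the corresponding $r_-$-output may be arbitrary garbage and, after the $A\to A'$ conversion, need not represent any element of $A'$. The definition of amalgamability is tailored exactly to tolerate this: $F$ must succeed whenever at least one candidate is valid, regardless of what the others look like. Coupled with the continuity of every constituent of $\Phi$, this forces the first two output values $(a^{(j)},b^{(j)})$ to coincide with $(a^*,b^*)$ whenever the input prefixes of length $L$ agree, which is precisely the collision the construction exploits.
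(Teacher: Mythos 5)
Your overall strategy (four test inputs $x^{(j)}$ with exactly one zero coordinate each, plus local constancy of the composite's witness output, then pigeonhole $\{0,1,2,3\}\subseteq\{a^*,b^*\}$) is a genuinely different and in principle viable route from the paper's single diagonalization, but as written it has a gap at exactly the point you flag as delicate. The step ``amalgamability still forces $\Phi(x^{(j)})$ to be a valid ${\sf HalfTruth}_{\Sigma^0_2}$-name'' is not justified: on input $x^{(j)}$ your pipeline re-applies $r_-$ (and the tracker of the equivalence $A\subseteq A'$) to the \emph{invalid} candidate name, and on invalid inputs these partial realizable functions are unconstrained --- they may produce only a finite output or diverge, not merely ``arbitrary garbage.'' If that happens, the tuple fed to the amalgamator $F$ is not a well-formed input (one of the candidate codes is never completed), so the definition of amalgamability gives you nothing, the call to $r_+$ is never properly made, and $\Phi(x^{(j)})$ need not be a name of anything; all that continuity guarantees is the already-committed prefix $(a^*,b^*)$, which by itself yields no contradiction. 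Your closing paragraph addresses only the case of total-but-wrong candidates, which the definition of amalgamability indeed tolerates, but not partiality, which it does not.

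The gap is repairable, in two ways. One is to totalize $r_-$ on the fly: invalidity of a candidate $\langle n,m,x\rangle$ for ${\sf HalfTruth}_{\Sigma^0_2}$ (i.e.\ $x_n\not=0^\infty$ and $x_m\not=0^\infty$) is recognizable at a finite stage, so one may run $r_-$ while watching for this event and switch to emitting dummy output once it occurs; this is the same device the paper uses in the proof of Lemma \ref{lem:almost-regular-amalgamable-regular}, and with it your pipeline is total and contract-respecting on each $x^{(j)}$, after which your argument goes through. The other, which is what the paper's own proof does, is to avoid recomputing $r_-$ at the spoiled inputs altogether: compute both witnesses $a=r_-(\{0,1\},x)$, $a'=r_-(\{2,3\},x)$ once at the all-zero input where both candidates are valid, note that they are natural numbers because $A$ is $\Sigma^0_2$ (hence locally constant), feed these \emph{fixed} witnesses to $F$ and then to $r_+$ at a single spoiled input $x'$ obtained by killing exactly the two coordinates $\{j,k\}=r_+(b,x)$; since only two coordinates are spoiled, one of the two original pairs remains a correct witness, so the contracts of $F$ and $r_+$ apply and $\{j,k\}$ must still be a witness for $x'$, which it is not. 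Either repair is needed before your pigeonhole argument is sound.
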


\begin{proof}
Suppose that ${\sf HalfTruth}_{\Sigma^0_2}\leq_{\sf m}A$ via $\varphi,r_-,r_+$.
Since $A\mono\om^\om$ is amalgamable, there exists a partial morphism $F$ such that if either $a$ or $a'$ is a witness for $\varphi(x)\in A$, then $F(a,a',x)$ is a witness for $\varphi(x)\in A$.
First put $x_n=0^\infty$ for each $n\in\om$.
For $x=(x_n)_{n\in\om}$, both $\{0,1\}$ and $\{2,3\}$ are witnesses for $x\in{\sf HalfTruth}_{\Sigma^0_2}$, so both $a:=r_-(\{0,1\},x)$ and $a':=r_-(\{2,3\},x)$ are witnesses for $\varphi(x)\in A$.
By amalgamability, $F(a,a',x)$ returns a witness $b$ for $\varphi(x)\in A$.
Then $r_+(b,x)$ returns a witness $\{j,k\}$ for $x\in{\sf HalfTruth}_{\Sigma^0_2}$.

Note that this $b$ is a natural number since $A$ is $\Sigma^0_2$.
Hence, the above mentioned computations are determined after reading a finite initial segment of $x$.
By changing only sufficiently large values, modify $x_j$ and $x_k$ to $x_j'$ and $x_j'$ so that $x'_j\not=0^\infty$ and $x'_k\not=0^\infty$ respectively while keeping $x_i'=0^\infty$ for each $i\not\in\{j,k\}$.
In particular, $x_i'=0^\infty$ holds for some $i<4$.
Hence, either $\{0,1\}$ or $\{2,3\}$ is a witness for $x'\in{\sf HalfTruth}_{\Sigma^0_2}$, so either $a=r_-(\{0,1\},x')$ or $a'=r_-(\{2,3\},x')$ is still a witness for $\varphi(x')\in A$.
Since only sufficiently large values of $x$ are modified to $x'$, we maintain $F(a,a',x')=b$, which must be a witness for $\varphi(x')\in A$ by amalgamability.
However, $r_+(b,x')=\{j,k\}$ is not a witness for $x\in{\sf HalfTruth}_{\Sigma^0_2}$ since our construction makes $x'_j\not=0^\infty$ and $x'_k\not=0^\infty$.
\end{proof}

We next show that ${\sf DisConn}$ is not only half $\Sigma^0_2$-hard, but is strictly stronger than any half $\Sigma^0_2$ subobject of $\om^\om$.

\begin{prop}\label{prop:disconn-hard2}
${\sf DisConn}_{\sf fun}\not\leq_{\sf m}{\sf HalfTruth}_{\Sigma^0_2}$.
\end{prop}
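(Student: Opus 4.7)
Proof outline:

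The plan is a proof by contradiction in the style of Proposition \ref{prop:half-truth-nonamalgamable}, exploiting the fact that ${\sf DisConn}_{\sf fun}$ admits graphs with three (or more) components, whereas a ${\sf HalfTruth}_{\Sigma^0_2}$-witness is a pair carrying only a ``binary'' ambiguity. Suppose $\varphi,r_-,r_+$ witness ${\sf DisConn}_{\sf fun}\leq_m{\sf HalfTruth}_{\Sigma^0_2}$. I would take $G$ to be the function-presented graph on three isolated vertices $v_0,v_1,v_2$ (encoded in $\om^\om$ in the natural way), so that $G\in|{\sf DisConn}_{\sf fun}|$ and every pair $(v_i,v_j)$ is a ${\sf DisConn}$-witness. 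Writing $W_{ij}:=r_-((v_i,v_j),G)=(a_{ij},b_{ij})$, one checks that $G$ is splittable in the sense of Lemma \ref{lem:split-lemma}, since any single pair can be connected by a fresh edge while the third vertex stays isolated. The Split Lemma therefore yields $r_+(W_{ij},G)=(v_i,v_j)$. By continuity, both $W_{ij}$ and the initial bits of $r_+(W_{ij},G)$ identifying the pair $(v_i,v_j)$ are determined by some finite prefix $G\upto P$.

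Next, for each pair $\{a,b\}$ and each position $N>P$, let $G^{ab,N}$ be the graph obtained from $G$ by adding a single edge between $v_a$ and $v_b$ at position $N$. Then $G^{ab,N}\in|{\sf DisConn}_{\sf fun}|$ (the third vertex is still isolated), the pair $(v_a,v_b)$ is no longer a ${\sf DisConn}$-witness, but the remaining two pairs are. By continuity, $r_-((v_i,v_j),G^{ab,N})=W_{ij}$ whenever $\{i,j\}\neq\{a,b\}$, and $r_+(W_{ij},G^{ab,N})=(v_i,v_j)$. The target contradiction is now direct: if, for some choice of $\{a,b\}$ and $N$, the pair $W_{ab}$ remains a valid ${\sf HalfTruth}$-witness for $\varphi(G^{ab,N})$, then $r_+(W_{ab},G^{ab,N})$ must be defined and output a valid ${\sf DisConn}$-witness for $G^{ab,N}$; but by continuity its initial bits must coincide with those of $r_+(W_{ab},G)=(v_a,v_b)$, forcing $r_+(W_{ab},G^{ab,N})=(v_a,v_b)$, which is not a valid witness for $G^{ab,N}$ because $v_a$ and $v_b$ are now in the same component.

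Thus the entire weight of the argument rests on the following claim, which I expect to be the main obstacle: for some pair $\{a,b\}\subseteq\{0,1,2\}$ and some sufficiently large $N$, at least one of $\varphi(G^{ab,N})_{a_{ab}}$ and $\varphi(G^{ab,N})_{b_{ab}}$ equals $0^\infty$. The difficulty is that ``being $0^\infty$'' is a $\Pi^0_1$ tail condition on a sequence, not preserved by finite perturbations, whereas we only directly control finite prefixes through continuity of $\varphi$. My plan for attacking it is a compactness argument: assuming the claim fails, for every $\{a,b\}$ and every large $N$ both coordinates $a_{ab},b_{ab}$ drop out of $T(G^{ab,N}):=\{n:\varphi(G^{ab,N})_n=0^\infty\}$, while (by Split Lemma plus continuity) both surviving witnesses $W_{ij}$ still hit $T(G^{ab,N})$. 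Since the set $\bigcup_{ij}\{a_{ij},b_{ij}\}$ is finite, for each $\{a,b\}$ a pigeonhole over $N\to\infty$ extracts some index $c_{ab}$ lying in $T(G^{ab,N})$ for infinitely many $N$, and by the pointwise convergence $\varphi(G^{ab,N})\to\varphi(G)$ this $c_{ab}$ must also lie in $T(G)$. Running this for all three $\{a,b\}$ simultaneously and tracking how the forbidden pairs $\{a_{ab},b_{ab}\}$ interact with the required hits $\{a_{ij},b_{ij}\}$ across the three modifications should produce an inconsistent assignment; the delicate bookkeeping of this three-way combinatorial pattern is where the real work of the proof lies.
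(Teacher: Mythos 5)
Your setup (isolated vertices, the Split Lemma giving $r_+(W_{ij},G)=(v_i,v_j)$, continuity fixing everything on a finite prefix, then adding one late edge to invalidate exactly one disconnectedness witness) is indeed how the paper's proof begins, but the statement you yourself flag as ``the main obstacle'' is the entire content of the proposition, and the route you sketch for it does not close. The constraints you actually use are: each surviving pair $\{i,j\}\neq\{a,b\}$ still has $r_-((v_i,v_j),G^{ab,N})=W_{ij}$ a valid ${\sf HalfTruth}$-witness of $\varphi(G^{ab,N})$, the Split Lemma identities, and continuity of $\varphi,r_\pm$. These constraints admit a perfectly consistent ``adversary'': take the $W_{ij}$ pairwise disjoint, let $\varphi(G)$ have one coordinate of each $W_{ij}$ equal to $0^\infty$, and when the edge $v_av_b$ is added let $\varphi$ respond by making both coordinates of $W_{ab}$ eventually nonzero while keeping one coordinate of each surviving $W_{ij}$ at $0^\infty$. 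Since condition (3) of many-one reducibility only constrains $r_+$ on \emph{valid} witnesses, the fact that $r_+(W_{ab},G^{ab,N})=(v_a,v_b)$ then causes no contradiction, and your pigeonhole/limit extraction of indices $c_{ab}\in T(G)$ only recovers information you already had (that $T(G)$ meets each $W_{ij}$). So no amount of bookkeeping over which of the finitely many indices lie in $T(G^{ab,N})$ can, by itself, produce the inconsistency; the claim is simply not a consequence of the constraints you track.

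The missing idea, which carries the paper's proof, is to apply $r_+$ to \emph{mixed} pairs, one coordinate taken from $W_{mn}$ and one from $W_{qr}$ with $\{m,n\}\neq\{q,r\}$ (in the paper's notation, $r_+(\{z^{\ell}_{mn},z^{p}_{qr}\},\alpha)$). Since a ${\sf HalfTruth}$-witness only asserts that one of its two coordinates is $0^\infty$, such a mixed pair is often forced to be a valid witness, and then $r_+$ of it must output a genuinely disconnected pair of vertices; the paper first proves that these outputs must land inside the chosen vertex set, then pins down which pairs they can be, and finally derives a contradiction by further graph modifications that connect the \emph{output} pairs of these cross-applications (vertices that may lie outside the original triple) and by bringing in a fourth vertex $u_3$. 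This also explains why the paper starts from the edgeless graph on $V=\N$ rather than on three vertices: one needs room both to connect arbitrary output pairs and to introduce new vertices. As written, your three-vertex graph forecloses exactly these moves, so beyond the (correct) opening reduction to a finite-prefix situation, the proof is not there.
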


\begin{proof}
Assume ${\sf DisConn}_{\sf fun}\leq_{\sf m}{\sf HalfTruth}_{\Sigma^0_2}$ via $\varphi,r_-,r_+$.
A witness for ${\sf HalfTruth}_{\Sigma^0_2}$ is in the form of a pair $(a,b)$, but since it is symmetric, we may write it as $\{a,b\}$; that is, we always have $r_-(\{u,v\},\alpha),r_+(\{a,b\},\alpha)\in[\om]^2$.

Begin with the edgeless graph $\alpha\colon E\to V^2$ with $V=\N$ and $E=\emptyset$.
Note that $\alpha\in{\sf Disconn}_{\sf fun}$ is splittable since even if some finite information $\alpha\upto\ell$ of $\alpha$ is fixed, for each pair $(u,v)$ of $V$, by putting $u\edge{e}v$ for $e\in E$ larger than $\ell$, only $(u,v)$ is made connected and all other pairs are maintained to be disconnected.

Now, for three distinct vertices $u_0,u_1,u_2\in V$, $\{u_j,u_k\}$ is a witness for disconnectedness for any $j\not=k$.
Let us consider $r_-(\{u_j,u_k\},\alpha)=\{z^0_{jk},z^1_{jk}\}$, which is a witness for $\varphi(\alpha)\in{\sf HalfTruth}_{\Sigma^0_2}$.
Here, $\varphi(\alpha)$ is a sequence in $\om^\om$.
By abuse of notation, we write its $n$th term as $\varphi(n,\alpha)$.
Note that if $j\not=k$ then either $\varphi(z^0_{jk},\alpha)=0^\infty$ or $\varphi(z^1_{jk},\alpha)=0^\infty$ holds.

We are now in the situation shown in the left half of Figure \ref{figure:graph3}:
We have six candidates $\{z^0_{01},z^{1}_{01},z^0_{02},z^1_{02},z^0_{12},z^1_{12}\}$ for witnesses for $\varphi(\alpha)\in{\sf HalfTruth}_{\Sigma^0_2}$.
We analyze to which pair of vertices $r_+$ moves each pair of these six candidates.

\begin{figure}[t]
\includegraphics[width=100mm]{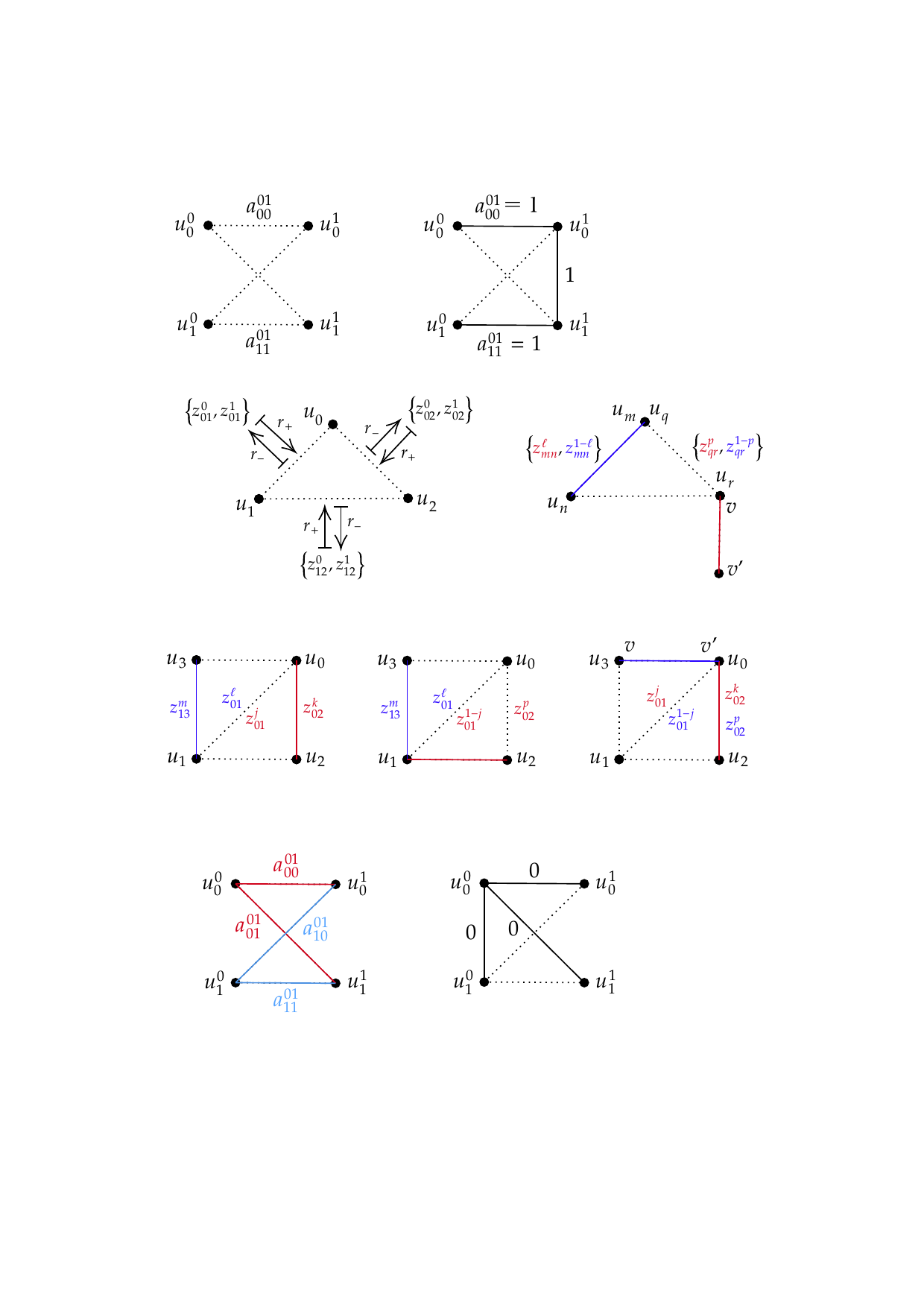}
\caption{(left) A reduction pair for ${\sf DisConn}_{\sf fun}\leq_{\sf m}{\sf HalfTruth}_{\Sigma^0_2}$ if it exists; (right) An example of our action if $r_+$ moves a pair $\{z^\ell_{mn},z^p_{qr}\}$ out of the triangle $\{u_0,u_1,u_2\}$.}\label{figure:graph3}
\end{figure}

\begin{claim}\label{claim:disconn-halftruth}
For any $\ell,m,n,p,q,r$, if $\langle \ell,\{m,n\}\rangle\not=\langle p,\{q,r\}\rangle$ then $r_+(\{z^\ell_{mn},z^p_{qr}\},\alpha)\subseteq\{u_0,u_1,u_2\}$.
\end{claim}

\begin{proof}
Otherwise, $\{v,v'\}:=r_+(\{z^\ell_{mn},z^p_{qr}\},\alpha)\not\subseteq\{u_0,u_1,u_2\}$.
If $\{m,n\}=\{q,r\}$ then $\ell\not=p$, so by Split Lemma \ref{lem:split-lemma}, we must have $\{v,v'\}=r_+(\{z^\ell_{mn},z^p_{qr}\},\alpha)=r_+(\{z^0_{mn},z^1_{mn}\},\alpha)=\{u_m,u_n\}$, which is impossible by our assumption.
Therefore, $\{m,n\}\not=\{q,r\}$.

Next, we assume that either $\varphi(z_{mn}^{1-\ell},\alpha)\not=0^\infty$ or $\varphi(z_{qr}^{1-p},\alpha)\not=0^\infty$ holds.
In the former case, by reading $\alpha$ up to a sufficiently large $t$, we recognize $\varphi(z_{mn}^{1-\ell},\alpha\upto t)\not=0^\infty$, so connect vertices $v$ and $v'$ by an edge $e\in E$ larger than $t$.
Then, $\{v,v'\}$ is not a witness for disconnectedness of the new graph $\alpha'$, but as $t$ is large, $r_+(z^\ell_{mn},z^p_{qr},\alpha')=\{v,v'\}$ is maintained.
This forces $\{z^\ell_{mn},z^p_{qr}\}$ not to be a witness for $\varphi(\alpha')\in{\sf HalfTruth}_{\Sigma^0_2}$; in particular, $\varphi(z_{mn}^{\ell},\alpha')\not=0^\infty$.
Therefore, this construction forces $\varphi(z_{mn}^i,\alpha')\not=0^\infty$ for each $i<2$.
However, $\{u_m,u_n\}$ is maintained to be disconnected, so $r_-(\{u_m,u_n\},\alpha')=\{z_{mn}^0,z_{mn}^1\}$ still returns a witness, which means that $\varphi(z_{mn}^i,\alpha')=0^\infty$ must be true for some $i<2$, which is impossible.
The same argument applies in the case that $\varphi(z_{qr}^{1-p},\alpha)\not=0^\infty$ holds.
Hence, both $\varphi(z_{mn}^{1-\ell},\alpha)=0^\infty$ and $\varphi(z_{qr}^{1-p},\alpha)=0^\infty$ are true.

By our assumption, the intersection $\{v,v'\}\cap\{u_0,u_1,u_2\}$ has at most one element, and $\{m,n\}\not=\{q,r\}$, so we have either $\{v,v'\}\cap\{u_m,u_n\}=\emptyset$ or $\{v,v'\}\cap\{u_q,u_r\}=\emptyset$.
We assume that the former holds.
The argument is the same for the latter case.

Then, consider the case that for any $i,j,k$ if $r_+(z^{1-\ell}_{mn},z^i_{jk},\alpha)$ is defined then this value is included in $\{u_m,u_n\}$.
Since both $\varphi(z_{mn}^{1-\ell},\alpha)=0^\infty$ and $\varphi(z_{qr}^{1-p},\alpha)=0^\infty$ are true, $r_+(z^{1-\ell}_{mn},z^{1-p}_{qr},\alpha)$ is defined, so this value is included in $\{u_m,u_n\}$ by our assumption.
Indeed, this value must be $\{u_m,u_n\}$ since it is a witness for disconnectedness.
Consider a graph $\alpha'$ with sufficiently large edges $v\to v'$ and $u_m\to u_n$ so that $r_+(z^{\ell}_{mn},z^p_{qr},\alpha')=\{v,v'\}$ and $r_+(z^{1-\ell}_{mn},z^{1-p}_{qr},\alpha')=\{u_m,u_n\}$ are maintained.
This makes $\{v,v'\}$ and $\{u_m,u_n\}$ connected, but the assumption $\{v,v'\}\cap\{u_m,u_n\}=\emptyset$ guarantees that no other pair is connected in $\alpha'$.
See the right half of Figure \ref{figure:graph3}.
Now $\{v,v'\}$ and $\{u_m,u_n\}$ are not witnesses for disconnectedness of $\alpha'$, so this forces $\{z^{\ell}_{mn},z^p_{qr}\}$ and $\{z^{1-\ell}_{mn},z^{1-p}_{qr}\}$ not to be witnesses for $\varphi(\alpha')\in{\sf HalfTruth}_{\Sigma^0_2}$.
In particular, we get $\varphi(z_{qr}^{i},\alpha')\not=0^\infty$ for each $i<2$.
Since $\{m,n\}\not=\{q,r\}$, the assumption $\{v,v'\}\cap\{u_m,u_n\}=\emptyset$ guarantees that $\{u_q,u_r\}$ is maintained to be disconnected in $\alpha'$ as mentioned above, so $r_-(\{u_q,u_r\},\alpha')=\{z^0_{qr},z^1_{qr}\}$ must be a witness for $\varphi(\alpha')\in{\sf HalfTruth}_{\Sigma^0_2}$.
This means $\varphi(z_{qr}^{i},\alpha')=0^\infty$ for some $i<2$, which is impossible.

Hence, there exists $i,j,k$ such that $r_+(z^{1-\ell}_{mn},z^i_{jk},\alpha)$ is defined, but the value $\{w,w'\}$ is not included in $\{u_m,u_n\}$.
Then consider a graph $\alpha'$ with sufficiently large edges $v\to v'$ and $w\to w'$ so that necessary computations are maintained.
Now $\{v,v'\}$ and $\{w,w'\}$ are not witnesses for disconnectedness of $\alpha'$, so this forces $\{z^{\ell}_{mn},z^p_{qr}\}$ and $\{z^{1-\ell}_{mn},z^i_{jk}\}$ not to be witnesses for $\varphi(\alpha')\in{\sf HalfTruth}_{\Sigma^0_2}$ as before.
In particular, we get $\varphi(z_{mn}^{i},\alpha')\not=0^\infty$ for each $i<2$.
Since $\{v,v'\}\cap\{u_m,u_n\}=\emptyset$ and $\{w,w'\}\not\subseteq\{u_m,u_n\}$, the intersection $\{v,v',w,w'\}\cap\{u_m,u_n\}$ has at most one element, so $\{u_m,u_n\}$ is maintained to be disconnected in $\alpha'$.
Thus, $r_-(\{u_m,u_n\},\alpha')=\{z_{mn}^0,z_{mn}^1\}$ must be a witness for $\varphi(\alpha')\in{\sf HalfTruth}_{\Sigma^0_2}$.
This means $\varphi(z_{mn}^{i},\alpha')=0^\infty$ for some $i<2$, which is impossible.
\end{proof}

\begin{claim}\label{claim:disconn-halftruth2}
$r_+(\{z^j_{01},z^k_{02}\},\alpha)\subseteq\{u_0,u_2\}$ for some $j,k<2$.
\end{claim}

\begin{proof}
Suppose not.
First consider the case that for each $j$ if $\varphi(z^j_{01},\alpha)=0^\infty$ then there exists $k$ such that $r_+(\{z^j_{01},z^k_{02}\},\alpha)\subseteq\{u_1,u_2\}$.
In this case, consider a graph $\alpha'$ with a sufficiently large edge $u_1\edge{e} u_2$ so that necessary computations are maintained.
Then $\{u_1,u_2\}$ is not a witness for disconnectedness of $\alpha'$, so if $\varphi(z^j_{01},\alpha)=0^\infty$, by our assumption, this forces $\{z^j_{01},z^k_{02}\}$ not to be a witness for $\varphi(\alpha')\in{\sf HalfTruth}_{\Sigma^0_2}$ as before.
In particular, we get $\varphi(z_{01}^{j},\alpha')\not=0^\infty$ and $\varphi(z_{02}^{k},\alpha')\not=0^\infty$.
If $\varphi(z^{j}_{01},\alpha)\not=\infty$, then this still holds for $\alpha'$ since $e$ above is sufficiently large.
Therefore, in any case, we obtain $\varphi(z^{j}_{01},\alpha')\not=0^\infty$ for each $j<2$.
However, $\{u_0,u_1\}$ is still a witness for disconnectedness of $\alpha'$, so $r_-(\{u_0,u_1\},\alpha')=\{z^0_{01},z^1_{01}\}$ must be a witness for $\varphi(\alpha')\in{\sf HalfTruth}_{\Sigma^0_2}$.
This means $\varphi(z^i_{01},\alpha')=0^\infty$ for some $i<2$, which is impossible.

Hence, there exists $j$ such that $\varphi(z^j_{01},\alpha)=0^\infty$ and $r_+(\{z^j_{01},z^k_{02}\},\alpha)\not\subseteq\{u_1,u_2\}$ for any $k$.
Note that $r_+(\{z^j_{01},z^k_{02}\},\alpha)$ is defined for each $k<2$ since $\varphi(z^j_{01},\alpha)=0^\infty$.
By our assumption, we also have $r_+(\{z^j_{01},z^k_{02}\},\alpha)\not\subseteq\{u_0,u_2\}$; hence by Claim \ref{claim:disconn-halftruth}, this value must be $\{u_0,u_1\}$.
In this case, consider a graph $\alpha'$ with a sufficiently large edge $u_0\to u_1$ so that necessary computations are maintained.
Then $\{u_0,u_1\}$ is not a witness for disconnectedness of $\alpha'$, which forces $\{z^j_{01},z^k_{02}\}$ not to be a witness for $\varphi(\alpha')\in{\sf HalfTruth}_{\Sigma^0_2}$ for each $k<2$.
In particular, we get $\varphi(z_{02}^{k},\alpha')\not=0^\infty$ for each $k<2$.
However, $\{u_0,u_2\}$ is still a witness for disconnectedness of $\alpha'$, so $r_-(\{u_0,u_2\},\alpha')=\{z^0_{02},z^1_{02}\}$ must be a witness for $\varphi(\alpha')\in{\sf HalfTruth}_{\Sigma^0_2}$.
This means $\varphi(z^k_{02},\alpha')=0^\infty$ for some $k<2$, which is impossible.
\end{proof}

\begin{figure}[t]
\includegraphics[width=110mm]{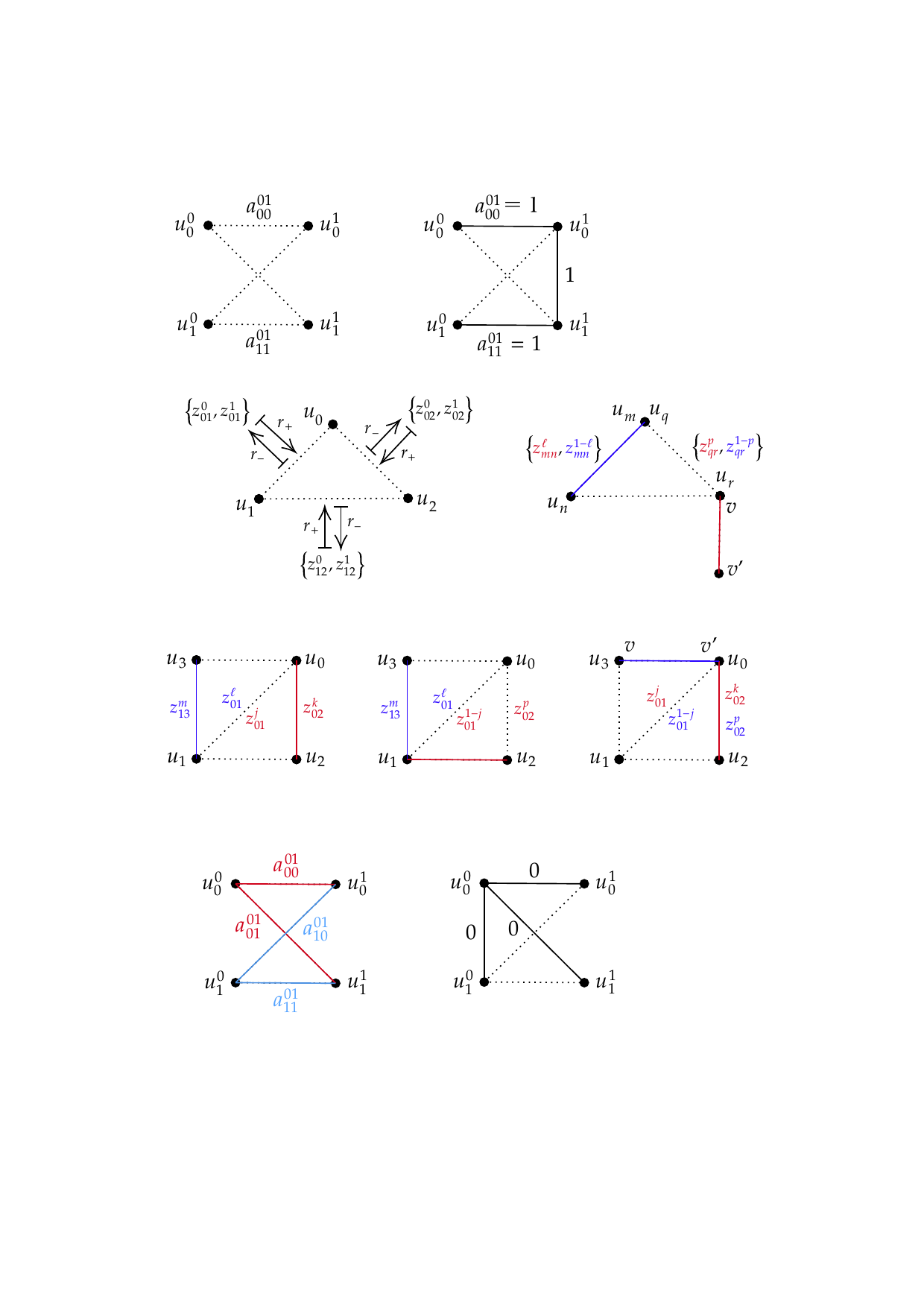}
\caption{Some examples of our actions.}\label{figure:graph4}
\end{figure}

Fix $j,k<2$ satisfying the condition in Claim \ref{claim:disconn-halftruth2}.
Now, choose a new vertex $u_3\in V$, and focus on $\{u_0,u_1,u_3\}$.
As in Claim \ref{claim:disconn-halftruth2}, one can see $r_+(\{z^\ell_{01},z^m_{13}\},\alpha)\subseteq\{u_1,u_3\}$ for some $\ell,m$.
We first assume $j\not=\ell$.
Then, consider a graph $\alpha'$ with sufficiently large edges $u_0\to u_2$ and $u_1\to u_3$ so that necessary computations are maintained.
See the leftmost part of Figure \ref{figure:graph4}.
Since $r_+(\{z^j_{01},z^k_{02}\},\alpha)\subseteq\{u_0,u_2\}$ and $r_+(\{z^\ell_{01},z^m_{13}\},\alpha)\subseteq\{u_1,u_3\}$, this forces $\varphi(z,\alpha')\not=0^\infty$ for each $z\in\{z^j_{01},z^k_{02},z^\ell_{01},z^m_{13}\}$.
In particular, $\varphi(z^i_{01},\alpha')\not=0^\infty$ for each $i<2$ since $j\not=\ell$.
Since we have taken different vertices, $\{u_0,u_2\}\cap\{u_1,u_3\}=\emptyset$.
Hence, $\{u_0,u_1\}$ is still a witness for disconnectedness of $\alpha'$, so $r_-(\{u_0,u_1\},\alpha')=\{z^0_{01},z^1_{01}\}$ must be a witness for $\varphi(\alpha')\in{\sf HalfTruth}_{\Sigma^0_2}$.
This means $\varphi(z^i_{01},\alpha')=0^\infty$ for some $i<2$, which is impossible.

Thus, we now assume $j=\ell$.
If $\varphi(z^{1-j}_{01},\alpha)\not=0^\infty$, then consider a graph $\alpha'$ with a sufficiently large edge $u_0\to u_2$ so that necessary computations are maintained.
Since $r_+(\{z^j_{01},z^k_{02}\},\alpha)\subseteq\{u_0,u_2\}$, which is maintained in $\alpha'$, this forces $\varphi(z^{i}_{01},\alpha')\not=0^\infty$ for each $i<2$.
However,  $\{u_0,u_1\}$ is still disconnected in $\alpha'$, so we get $\varphi(z^i_{01},\alpha')=0^\infty$ for some $i<2$ as before, which is impossible.
Hence, we get $\varphi(z^{1-j}_{01},\alpha)=0^\infty$.
Thus, $r_+(\{z_{01}^{1-j},z_{02}^p\},\alpha)$ is defined for each $p<2$.

If $r_+(\{z_{01}^{1-j},z_{02}^p\},\alpha)\subseteq\{u_0,u_1\}$ for each $p<2$, then consider a graph $\alpha'$ with a sufficiently large edge $u_0\to u_1$ so that necessary computations are maintained.
In particular, this forces $\varphi(z^{p}_{02},\alpha')\not=0^\infty$ for each $p<2$; however, $\{u_{0},u_2\}$ is still disconnected in $\alpha$, this is impossible as before.
Therefore, we obtain $r_+(\{z_{01}^{1-j},z_{02}^p\},\alpha)\not\subseteq\{u_0,u_1\}$ for some $p<2$.

Assume $r_+(\{z_{01}^{1-j},z_{02}^p\},\alpha)=\{v,v'\}$.
If $\{v,v'\}=\{u_1,u_2\}$, then consider a graph $\alpha'$ with sufficiently large edges $u_1\to u_2$ and $u_1\to u_3$ so that necessary computations are maintained.
See the middle part of Figure \ref{figure:graph4}.
Since $r_+(\{z^\ell_{01},z^m_{13}\},\alpha)\subseteq\{u_1,u_3\}$, this forces $\varphi(z,\alpha')\not=0^\infty$ for each $z\in\{z^{1-j}_{01},z^p_{02},z^\ell_{01},z^m_{13}\}$.
In particular, $\varphi(z^i_{01},\alpha')\not=0^\infty$ for each $i<2$ since $j=\ell$.
However, $\{u_0,u_1\}$ is still disconnected in $\alpha'$, which is impossible as in the previous argument.

If $\{v,v'\}\not=\{u_1,u_2\}$, then consider a graph $\alpha'$ with sufficiently large edges $v\to v'$ and $u_0\to u_2$ so that necessary computations are maintained.
See the rightmost part of Figure \ref{figure:graph4}.
Since $r_+(\{z^j_{01},z^k_{02}\},\alpha)\subseteq\{u_0,u_2\}$, this forces $\varphi(z,\alpha')\not=0^\infty$ for each $z\in\{z^{1-j}_{01},z^p_{02},z^j_{01},z^k_{02}\}$.
In particular, $\varphi(z^i_{01},\alpha')\not=0^\infty$ for each $i<2$.
Now note that $\{u_0,u_1\}$ is still disconnected in $\alpha'$ since $\{v,v'\}\not=\{u_1,u_2\}$.
Thus, this leads to a contradiction as before.
\end{proof}

Finally, we show that ${\sf DisConn}$ is not $\Sigma^0_2$-complete.
In fact, ${\sf DisConn}$ cannot even reduce ${\sf L}$, even though it is not amalgamable.
Here, recall that ${\sf L}$ is a $\Sigma_{\pi\cup\pi}$ subobject of $\om^\om$ introduced in Example \ref{exa:tartan-difference-hierarchy}.

\begin{prop}\label{prop:disconn-weak}
${\sf L}\not\leq_{\sf m}{\sf DisConn}$.
\end{prop}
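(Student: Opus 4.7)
The plan is to derive a contradiction from the assumption $\sf L \leq_m \sf DisConn$ via $\varphi, r_-, r_+$, exploiting the non-regularity and splittability of the point $z = (0^\infty, 0^\infty) \in |{\sf L}|$. Setting $G = \varphi(z)$ and $(a_i, b_i) = r_-(\langle i, 0^\infty, 0^\infty\rangle)$ for $i \in \{0, 1\}$, both pairs are disconnected in $G$. Since $z$ is splittable in the sense of Lemma \ref{lem:split-lemma} (extend $0^\ell$ in one coordinate by $10^\infty$ to kill only the chosen witness), the Split Lemma forces $r_+(z, (a_i, b_i, G))$ to begin with bit $i$; in particular $(a_0, b_0) \neq (a_1, b_1)$.

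Next, choose $s$ large enough that, by continuity, the first output bit of both $r_+$ computations and the witnesses $(a_0, b_0), (a_1, b_1)$ are all decided from $z \upto s$ together with an initial segment $G \upto t$, and further such that $\varphi(z') \upto t = G \upto t$ for every $z' \upto s = z \upto s$. For the perturbations $z^0 = (0^s10^\infty, 0^\infty)$ and $z^1 = (0^\infty, 0^s10^\infty)$, both in $|{\sf L}|$, continuity of $r_-$ shows that $(a_{1-i}, b_{1-i})$ remains a disconnected pair in $\varphi(z^i)$. I claim that in addition $(a_i, b_i)$ must be \emph{connected} in $\varphi(z^i)$: otherwise $(a_i, b_i, \varphi(z^i))$ is a valid ${\sf DisConn}$-name, so $r_+(z^i, (a_i, b_i, \varphi(z^i)))$ must be an ${\sf L}$-name of $z^i$, whose only valid first bit is $1-i$; yet continuity, combined with the agreement of inputs on the settled prefixes, forces this bit to equal $i$, a contradiction.

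The main obstacle, and the heart of the proof, is to extract the final contradiction from this structural information, since the constraints of the preceding paragraph are a priori satisfiable by a $\varphi$ that ``activates'' appropriate edges upon perturbation. The idea is to combine the points $z^0_T = (0^T10^\infty, 0^\infty)$ (varying $T \geq s$) and their analogues $z^1_{T'}$ with the non-${\sf L}$ point $z^{\bowtie}_{T,T'} = (0^T10^\infty, 0^{T'}10^\infty)$, whose image $\varphi(z^{\bowtie}_{T,T'})$ must be a connected graph. By continuity of $\varphi$ in each coordinate separately, for $T' \gg T$ the graph $\varphi(z^{\bowtie}_{T,T'})$ agrees with $\varphi(z^0_T)$ on arbitrarily long initial segments of the edge encoding, while still having $(a_1, b_1)$ connected --- forcing the connecting path to use a specific flipped edge $f_{T'}$ whose characteristic function depends on position $T'$ of the second coordinate; symmetric remarks hold for $z^1_{T'}$ and $(a_0, b_0)$. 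A case analysis on how the natural-number pairs $(a_0, b_0), (a_1, b_1)$ sit inside the component structure of $G$, parallel to the one used in the proof of Proposition \ref{prop:disconn-hard2} (with the splittability of the empty graph there replaced here by the splittability of $z$), then yields an incompatible requirement on $\varphi$, completing the contradiction. The hard part will be controlling these flipped edges uniformly in $T, T'$: naively, $\varphi$ may use genuinely new edges for each $(T, T')$, so the argument must extract from the finite-witness structure of ${\sf DisConn}$ a small fixed family of ``canonical'' responses, analogous to Claim \ref{claim:disconn-halftruth}.
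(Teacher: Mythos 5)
Your first two paragraphs track the paper's opening moves correctly: the Split Lemma applied to the splittable point $x=(0^\infty,0^\infty)$ gives $r_+(\{u^i_0,u^i_1\},x)=i$ for the two witness pairs $\{u^i_0,u^i_1\}=r_-(i,x)$, and a single perturbation $x^i$ (making coordinate $i$ nonzero beyond the continuity stage) forces $\{u^i_0,u^i_1\}$ to become connected in $\varphi(x^i)$ while $\{u^{1-i}_0,u^{1-i}_1\}$ stays disconnected. But at exactly the point you call the ``heart of the proof'' there is a genuine gap: you do not supply the argument, and the route you sketch is not the one that works. The missing idea is to apply $r_+$ not only to the two pairs returned by $r_-$ but to the four \emph{cross pairs} $\{u^0_k,u^1_\ell\}$, still at the original point $x$, obtaining values $a^{01}_{k\ell}=r_+(\{u^0_k,u^1_\ell\},x)\in\{0,1\}$ (possibly undefined); one then chooses a single stage $t$ at which continuity has settled all of $r_-(i,x\upto t)$ and $r_+(\{u^i_k,u^j_\ell\},x\upto t)$, and at which $\varphi(x\upto t)$ already contains a connecting path for every cross pair that is connected in $G_x$ (this also handles undefined values: if $r_+$ is undefined on a pair, that pair must already be connected, and stays so). The contradiction then comes from a finite case analysis on the $2\times 2$ matrix $(a^{01}_{k\ell})$: perturbing the single coordinate named by a repeated value forces two cross pairs sharing a vertex to become connected, and transitivity of connectivity then connects the witness pair $\{u^i_0,u^i_1\}$ of the coordinate that is still $0^\infty$, contradicting the validity of $r_-(i,x')$. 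Crucially, every point used in this argument stays inside $|{\sf L}|$, so $r_-$ and $r_+$ remain applicable throughout.

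Your substitute plan --- bringing in the non-${\sf L}$ points $z^{\bowtie}_{T,T'}$ with connected image and tracking the ``flipped edges'' $f_{T'}$ uniformly in $T,T'$ --- is left undeveloped, and the obstacle you yourself flag (that $\varphi$ may respond with genuinely new edges for each $(T,T')$, so no fixed canonical family is available) is a real one: nothing in the reduction constrains $\varphi$ off $|{\sf L}|$ beyond mere connectedness of the image, so an analogue of Claim \ref{claim:disconn-halftruth} is not forthcoming along that route. As written, the proposal establishes only the preliminary structural facts and defers the actual contradiction to an unproved claim, so it does not constitute a proof.
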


\begin{figure}[t]
\includegraphics[width=70mm]{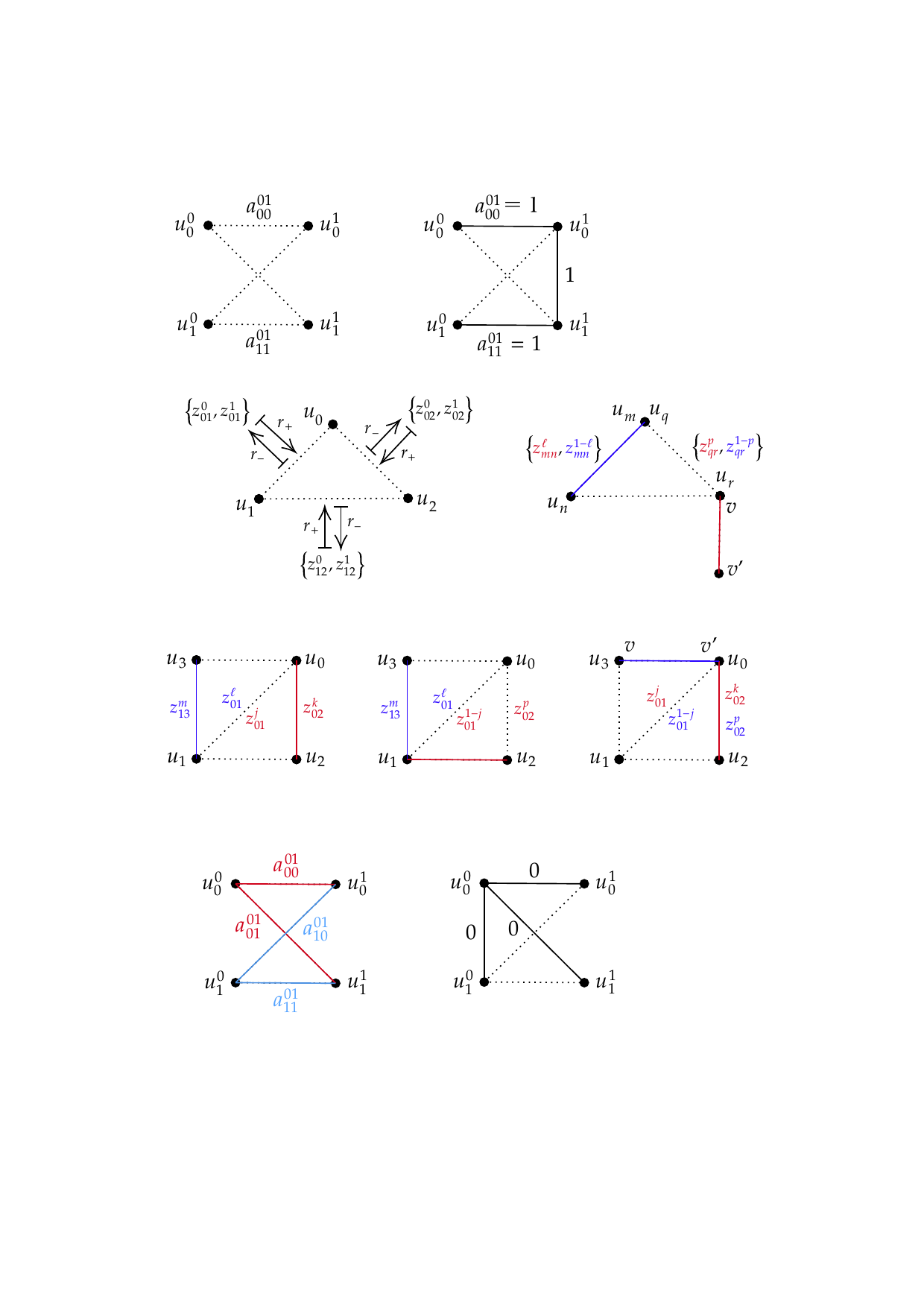}
\caption{If $\{a^{01}_{00},a^{01}_{11}\}\subseteq\{1\}$, our action forces $\{u^0_0,u^1_0\}$, $\{u^1_0,u^1_1\}$ and $\{u^1_1,u^0_1\}$ to be connected.}\label{fig:graph-proof1}
\end{figure}

\begin{proof}
Otherwise, ${\sf L}\leq_{\sf m}{\sf DisConn}$ via some $\varphi,r_-,r_+$.
Begin with $x_0=x_1=0^\infty$.
Clearly, $x:=(x_0,x_1)\in{\sf L}$ is splittable.
Then $r_-(i,x)=\{u^i_0,u^i_1\}$ is a witness for disconnectedness of the graph $G_x:=\varphi(x)$.
By Split Lemma \ref{lem:split-lemma}, we get $r_+(\{u^i_0,u^i_1\},x)=i$.
Now, consider $r_+(\{u^i_k,u^j_\ell\},x)=a^{ij}_{k\ell}$ for each $i,j,k,\ell$.
Let $t$ be a sufficiently large value such that $r_-(i,x\upto t)=\{u^i_0,u^i_1\}$ and $r_+(\{u^i_k,u^j_\ell\},x\upto t)=a^{ij}_{k\ell}$ are determined.
Similarly, if $\{u^i_k,u^j_\ell\}$ is connected in $G_x$, restrain $x\upto t$ so that $\varphi(x\upto t)$ contains a path connecting these vertices.

Assume that $\{a_{00}^{01},a_{11}^{01}\}$ is at most singleton; that is, there exists $i<2$ such that if $a_{kk}^{01}$ is defined then $a_{kk}^{01}=i$ for any $k<2$.
Then set $x_{i}'\not=0^\infty$ by changing some value of $x_{i}$ greater than $t$.
Since $r_+(\{u^{i}_0,u^{i}_1\},x')=i$, this forces $\{u^{i}_0,u^{i}_1\}$ to be connected in $G_{x'}$.
Given $k<2$, if $a_{kk}^{01}$ is defined, then $r_+(\{u^0_k,u^1_k\},x')=a_{kk}^{01}=i$, so $\{u_{k}^0,u_{k}^1\}$ must also be connected in $G_{x'}$.
If $a_{kk}^{01}$ is undefined, this means that $r_+(\{u^0_k,u^1_k\},x)$ is undefined, so $\{u_{k}^0,u_{k}^1\}$ must be connected in $G_x$.
By our choice of $t$, $\{u_{k}^0,u_{k}^1\}$ is still connected in $G_{x'}$.
As $k$ is arbitrary, now note that each of $\{u_\ell^{1-i},u_\ell^{i}\}$, $\{u^i_\ell,u^i_{1-\ell}\}$ and $\{u^i_{1-\ell},u^{1-i}_{1-\ell}\}$ is connected in $G_{x'}$; see Figure \ref{fig:graph-proof1} for $i=1$.
By connecting all of these paths, we conclude that $\{u_\ell^{1-i},u^{1-i}_{1-\ell}\}$ is connected in $G_{x'}$.
However, since $x_{1-i}'=0^\infty$, $r_-(1-i,x')=\{u^{1-i}_0,u^{1-i}_1\}$ must be a witness for disconnectedness of $G_{x'}$, which is impossible.
Hence, we must have $a_{00}^{01}\not=a_{11}^{01}$, both of which are defined.
By the same argument, one can see $a_{01}^{01}\not=a_{10}^{01}$, both of which are defined; see also the left side of Figure \ref{fig:graph-proof2}, where different values are represented by different types of lines.

\begin{figure}[t]
\includegraphics[width=70mm]{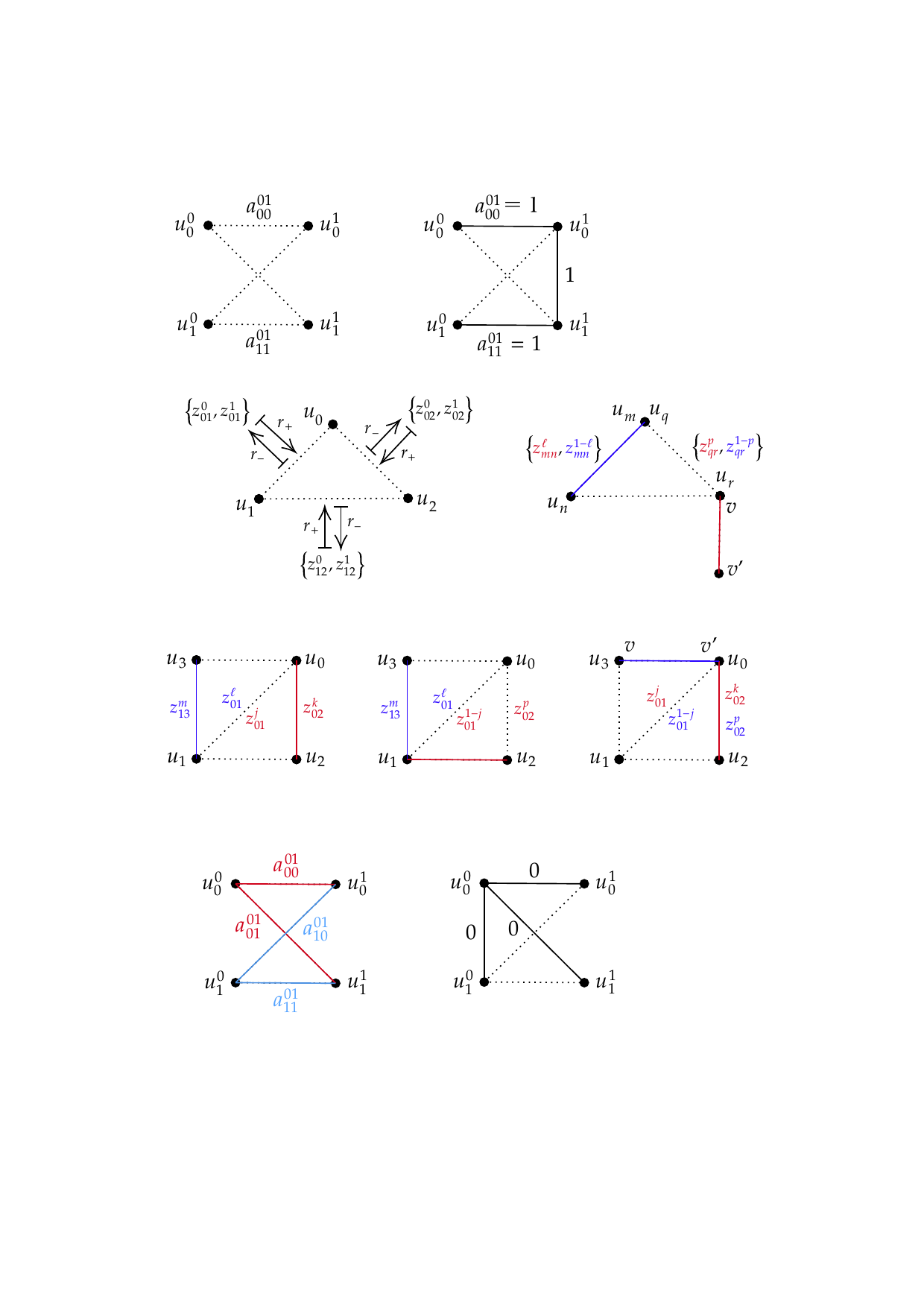}
\caption{If $a^{01}_{00}=a^{01}_{01}=0$, our action forces $\{u^0_0,u^1_0\}$, $\{u^0_0,u^1_1\}$ and $\{u^1_1,u^0_1\}$ to be connected.}\label{fig:graph-proof2}
\end{figure}

Now assume $a^{01}_{k0}=a^{01}_{k1}$ for some $k<2$.
Let $i$ be this value.
Then we get $a^{01}_{1-k,1}\not=a^{01}_{k0}=a^{01}_{k1}\not=a^{01}_{1-k,0}$, so $a^{01}_{1-k,0}=a^{01}_{1-k,1}=1-i$ holds.
Hence, we get $a^{01}_{k0}=a^{01}_{k1}=0$ for some $k$.
Then set $x_{0}'\not=0^\infty$ by changing some value of $x_{0}$ greater than $t$.
Since $r_+(\{u^0_k,u^1_\ell\},x')=a_{k\ell}^{01}=0$, this forces $\{u^0_k,u^1_0\}$ and $\{u^0_k,u^1_1\}$ to be connected in $G_{x'}$.
Similarly, since $r_+(\{u^{0}_0,u^{0}_1\},x')=0$, this also forces $\{u^{0}_0,u^{0}_1\}$ to be connected in $G_{x'}$.
This implies that $\{u^1_0,u^1_1\}$ is also connected in $G_{x'}$; see Figure \ref{fig:graph-proof2} for $k=0$.
However, since $x_{1}'=0^\infty$, $r_-(1,x')=\{u^{1}_0,u^{1}_1\}$ must be a witness for disconnectedness of $G_{x'}$, which is impossible.

Thus, $a^{01}_{k0}\not=a^{01}_{k1}$ for any $k<2$.
Then we have $a^{01}_{0\ell}\not=a^{01}_{0,1-\ell}\not=a^{01}_{1\ell}$, which implies $a^{01}_{0\ell}=a^{01}_{1\ell}$.
As in the above argument, we get $a^{01}_{0\ell}=a^{01}_{1\ell}=1$ for some $\ell$.
Then set $x_{1}'\not=0^\infty$ by changing some value of $x_{1}$ greater than $t$.
Since $r_+(\{u^0_k,u^1_\ell\},x')=a_{k\ell}^{01}=1$, this forces $\{u^0_0,u^1_\ell\}$ and $\{u^0_1,u^1_\ell\}$ to be connected in $G_{x'}$.
Similarly, since $r_+(\{u^{1}_0,u^{1}_1\},x')=1$, this also forces $\{u^{1}_0,u^{1}_1\}$ to be connected in $G_{x'}$.
This implies that $\{u^0_0,u^0_1\}$ is also connected in $G_{x'}$.
However, since $x_{0}'=0^\infty$, $r_-(0,x')=\{u^{0}_0,u^{0}_1\}$ must be a witness for disconnectedness of $G_{x'}$, which is impossible.
\end{proof}

\begin{proof}[Proof of Theorem \ref{thm:basic-sigma-2-subobject-proper}]
By Theorem \ref{thm:fin-complete-unique-witness-property} and Proposition \ref{prop:bddseq-amalgamable-not-uwp}, ${\sf Fin}$ has the unique witness property, but ${\sf BddSeq}_\om$ does not.
By Observation \ref{obs:sigma02-witnessproperty-downclosed}, if ${\sf BddSeq}_\om\leq_{\sf m}{\sf Fin}$, then ${\sf BddSeq}_\om$ must have the unique witness property, which is false; hence ${\sf BddSeq}_\om\not\leq_{\sf m}{\sf Fin}$.
By Proposition \ref{prop:simga02-uniquewitness-to-increasingwitness}, ${\sf Fin}$ has the increasing witness property; hence Theorem \ref{thm:bddseq-complete-increasing-witness-property} implies ${\sf Fin}<_{\sf m}{\sf BddSeq}_\om$.
Again by Proposition \ref{prop:bddseq-amalgamable-not-uwp}, ${\sf BddSeq}_\om$ is amalgamable, so by Propositions \ref{prop:half-hard-amalgamable} and \ref{prop:half-truth-nonamalgamable}, we get ${\sf BddSeq}_\om<_{\sf m}{\sf HalfTruth}_{\Sigma^0_2}$.
By Propositions \ref{prop:disconn-hard} and \ref{prop:disconn-hard}, we obtain ${\sf HalfTruth}_{\Sigma^0_2}<_{\sf m}{\sf DisConn}$.
Finally, by Proposition \ref{prop:disconn-weak}, ${\sf DisConn}$ is not $\Sigma^0_2$-complete; hence ${\sf DisConn}<_{\sf m}{\sf Truth}_{\Sigma^0_2}$.
\end{proof}

Finally, let us recall that we are free to choose ${\sf K_1}$, ${\sf K}_2$, or ${\sf KV}$ as our coding system.
This means that the results we have presented in this section are for the witnessed version of any of ``many-one reducibility for index sets within ${\rm Tot}$,'' ``Wadge reducibility,'' and ``effective Wadge reducibility.''

In conclusion, the notion of many-one reducibility for witnessed subsets is expected to bring new perspectives for finer analysis of definable subsets.
In this article, we have only classified $\Sigma^0_2$ subobjects, but the author and his colleagues have already started to classify higher levels of the arithmetic hierarchy, the $\Sigma^1_1$ level, and the difference hierarchy.

\begin{ack}
The author wishes to thank Koki Hashimoto for valuable discussions.
The author's research was partially supported by JSPS KAKENHI Grant Numbers 21H03392, 22K03401 and 23H03346.
\end{ack}

\bibliographystyle{plain}
\bibliography{references}

\begin{thebibliography}{10}

\bibitem{Bau00}
Andrej Bauer.
\newblock {\em The Realizability Approach to Computable Analysis and Topology}.
\newblock PhD thesis, School of Computer Science, Carnegie Mellon University,
  Pittsburgh, 2000.

\bibitem{pauly-handbook}
Vasco Brattka, Guido Gherardi, and Arno Pauly.
\newblock {\em Weihrauch Complexity in Computable Analysis}, pages 367--417.
\newblock Springer International Publishing, Cham, 2021.

\bibitem{FoFe23}
Yannick Forster and Felix Jahn.
\newblock Constructive and synthetic reducibility degrees: {P}ost's problem for
  many-one and truth-table reducibility in {C}oq.
\newblock In {\em 31st {EACSL} {A}nnual {C}onference on {C}omputer {S}cience
  {L}ogic}, volume 252 of {\em LIPIcs. Leibniz Int. Proc. Inform.}, pages Art.
  No. 21, 21. Schloss Dagstuhl. Leibniz-Zent. Inform., Wadern, 2023.

\bibitem{Lev73}
L.~A. Levin.
\newblock Universal enumeration problems.
\newblock {\em Problemy Pereda\v{c}i Informacii}, 9(3):115--116, 1973.

\bibitem{OdiBook}
Piergiorgio Odifreddi.
\newblock {\em Classical Recursion Theory}, volume 125 of {\em Studies in Logic
  and the Foundations of Mathematics}.
\newblock North-Holland Publishing Co., Amsterdam, 1989.
\newblock The theory of functions and sets of natural numbers, With a foreword
  by G. E. Sacks.

\bibitem{Peq15}
Yann Pequignot.
\newblock A {W}adge hierarchy for second countable spaces.
\newblock {\em Arch. Math. Logic}, 54(5-6):659--683, 2015.

\bibitem{Sch18}
Philipp Schlicht.
\newblock Continuous reducibility and dimension of metric spaces.
\newblock {\em Arch. Math. Logic}, 57(3-4):329--359, 2018.

\bibitem{vOBook}
Jaap van Oosten.
\newblock {\em Realizability: an introduction to its categorical side}, volume
  152 of {\em Studies in Logic and the Foundations of Mathematics}.
\newblock Elsevier B. V., Amsterdam, 2008.

\bibitem{VeldmanPhD}
Wim Veldman.
\newblock {\em Investigations in intuitionistic hierarchy theory}.
\newblock PhD thesis, Katholieke Universiteit Nijimegen, 1981.

\bibitem{Vel90}
Wim Veldman.
\newblock A survey of intuitionistic descriptive set theory.
\newblock In {\em Mathematical logic}, pages 155--174. Plenum, New York, 1990.

\bibitem{Vel05}
Wim Veldman.
\newblock Two simple sets that are not positively {B}orel.
\newblock {\em Ann. Pure Appl. Logic}, 135(1-3):151--209, 2005.

\bibitem{Vel09}
Wim Veldman.
\newblock The fine structure of the intuitionistic {B}orel hierarchy.
\newblock {\em Rev. Symb. Log.}, 2(1):30--101, 2009.

\bibitem{Vel22}
Wim Veldman.
\newblock Projective sets, intuitionistically.
\newblock {\em J. Log. Anal.}, 14:Paper No. 5, 85, 2022.

\end{thebibliography}

\end{document}